\numberwithin{equation}{section}
\newcommand{\figcaption}[1]{\def\@captype{figure}\caption{#1}}
\newcommand{\tblcaption}[1]{\def\@captype{table}\caption{#1}}
\newcommand{\Det}{\mathrm{Det}\,}
\def\rpkern{\mathchoice{\kern-1.45em}{\kern-1.11em}{}{}}%
\def\grpkern{\mathchoice{\kern-1.013em}{\kern-0.825em}{}{}}%
\def\rpkern{\mathchoice{\kern-1.44em}{\kern-1.11em}{}{}}%
\def\grpkern{\mathchoice{\kern-1.00em}{\kern-0.81em}{}{}}%
\def\rpkern{\mathchoice{\kern-1.472em}{\kern-1.14em}{}{}}%
\def\grpkern{\mathchoice{\kern-1.00em}{\kern-0.815em}{}{}}%
\def\minibullet{\mathchoice%
{\raise0.2ex\hbox{$\scriptstyle\bullet$}}%
{\raise0.26ex\hbox{$\scriptscriptstyle\bullet$}}{}{}}
\def\butabullet{\mathchoice%
{\raise0.8ex\hbox{$\scriptstyle\bullet$}{\kern-0.365em}%
\lower0.4ex\hbox{$\scriptstyle\bullet$}}%
{\raise0.75ex\hbox{$\scriptscriptstyle\bullet$}{\kern-0.335em}%
\lower0.25ex\hbox{$\scriptscriptstyle\bullet$}}{}{}}
\def\customprod#1#2%
\renewcommand{\d}{\delta}
\renewcommand{\Re}{\mathrm{Re}\,}
\newcommand{\Aut}{\mathop{\mathrm{Aut}}\,}
\newcommand{\tr}{\mathop{\mathrm{tr}}\,}
\renewcommand{\det}{\mathop{\mathrm{det}}\,}
\newtheorem*{multitheorem}{\variable@name}
\theoremstyle{definition}
\newcommand{\variable@name}{Theorem}
\newtheorem*{multiproclaim}{\variable@name}
\theoremstyle{plain}
\newtheorem{thm}{Theorem}[section]
\newtheorem{prop}[thm]{Proposition}
\newtheorem{lem}[thm]{Lemma}
\newtheorem{cor}[thm]{Corollary}
\newtheorem{conj}[thm]{Conjecture}
\theoremstyle{definition}
\newtheorem{dfn}[thm]{Definition}
\newtheorem{rmk}[thm]{Remark}
\begin{document}
\title{Multivariate Meixner, Charlier and Krawtchouk polynomials}
\author{Genki Shibukawa
\thanks{
This work was supported by Grant-in-Aid for JSPS Fellows (Number 12J04930). 
}}
\date{\empty}

% Mathematics Subject Classification 2000
% \MSC{32M15, 33C45, 43A90}    

% Keywords and phrases
% \keywords{Multivariate analysis; discrete orthogonal polynomials; symmetric cones; \\
% spherical polynomials; generalized binomial coefficients}         

% For each author, add one of the following
% \address{Author name ??\\              
% Author address ??\\            
% Author email ??                
% }

% {\small {\it{Keywords}}: Multivariate analysis; discrete orthogonal polynomials; symmetric cones; \\
% spherical polynomials; generalized binomial coefficients\\
% Mathematics Subject Classification 2010: 32M15, 33C45, 43A90}
% \pagestyle{plain}

\maketitle

% \begin{abstract}
% We introduce a new type of multiple zeta functions, which we call bilateral zeta functions.  
% We prove that the bilateral zeta function has a nice Fourier series expansion and the Barnes zeta function can be expressed as a finite sum of bilateral zeta functions. 
% By these properties of the bilateral zeta functions, we obtain simple proofs of some formulas, for example the reflection formula for the multiple gamma function, the inversion formula of the Dedekind $\eta$-function, Ramanujan's formula, Fourier expansion of the Barnes zeta function and multiple Iseki's formula. 
% \end{abstract}

\begin{abstract}
We introduce some multivariate analogues of Meixner, Charlier and Krawtchouk polynomials, 
and establish their main properties, that is, duality, degenerate limits, generating functions, orthogonality relations, difference equations, recurrence formulas and determinant expressions. 
A particularly important and interesting result is that ``the generating function of the generating functions'' for the Meixner polynomials coincides with the generating function of the Laguerre polynomials, which has previously not been known even for the one variable case.  
Actually, main properties for the multivariate Meixner, Charlier and Krawtchouk polynomials are derived from some properties of the multivariate Laguerre polynomials by using this key result. 
% Mathematics Subject Classification 2010: 32M15, 33C45, 43A90 \\
\msc{32M15, 33C45, 43A90} \\
\keywords{Multivariate analysis; discrete orthogonal polynomials; symmetric cones; 
spherical polynomials; generalized binomial coefficients} 
\end{abstract}

% \begin{abstract}
% We introduce some multivariate analogues of Meixner, Charlier and Krawtchouk polynomials, 
% and establish their main properties, that is, duality, degenerate limits, generating functions, orthogonality relations, difference equations, recurrence formulas and determinant expressions. 
% A particularly important and interesting result is that ``the generating function of the generating function'' for the Meixner polynomials coincides with the generating function of the Laguerre polynomials, which has previously not been known even for the one variable case.  
% Actually, main properties for the multivariate Meixner, Charlier and Krawtchouk polynomials are derived from some properties of the multivariate Laguerre polynomials by using this key result. 
% \end{abstract}

\section{Introduction}
The standard Meixner, Charlier and Krawtchouk polynomials of a single discrete variable are defined by %the Gaussian hypergeometric 
\begin{align}
M_{m}(x;\alpha ,c)&:={_{2}F_1}\left(\begin{matrix}-m,-x\\ \alpha \end{matrix};1-\frac{1}{c}\right)
=\sum_{k=0}^{m}\frac{k!}{(\alpha)_{k}}\binom{m}{k}\binom{x}{k}\left(1-\frac{1}{c}\right)^{k}, \nonumber \\
C_{m}(x;a)&:={_{2}F_0}\left(\begin{matrix}-m,-x\\ {-} \end{matrix};-\frac{1}{a}\right)
=\sum_{k=0}^{m}k!\binom{m}{k}\binom{x}{k}\left(-\frac{1}{a}\right)^{k}, \nonumber \\
K_{m}(x;p,N)&:={_{2}F_1}\left(\begin{matrix}-m,-x\\ -N \end{matrix};\frac{1}{p}\right)
=\sum_{k=0}^{m}\frac{k!}{(-N)_{k}}\binom{m}{k}\binom{x}{k}\left(\frac{1}{p}\right)^{k}, \nonumber
\end{align}
respectively. 
These polynomials have been generalized to the multivariate case \cite{DG}, \cite{Gr1}, \cite{Gr2}, and \cite{Il}. 
Although these multivariate discrete orthogonal polynomials are written by the Aomoto-Gelfand hypergeometric series, 
we introduce other types of multivariate Meixner, Charlier and Krawtchouk polynomials in this article, which are defined by the generalized binomial coefficients. 
Moreover, we provide their fundamental properties, 
that is, duality, degenerate limits, generating functions, orthogonality relations, difference equations and recurrence formulas. 
The most basic result in these properties is Theorem\,\ref{thm:master generating fnc 1}, 
which states that ``the generating function of the generating functions'' for the multivariate Meixner polynomials 
$$
\sum_{\mathbf{x} \in \mathscr{P}}d_{\mathbf{x}}\frac{1}{\left(\frac{n}{r}\right)_{\mathbf{x}}}
\left\{\sum_{\mathbf{m} \in \mathscr{P}}d_{\mathbf{m}}\frac{(\alpha)_{\mathbf{m}}}{\left(\frac{n}{r}\right)_{\mathbf{m}}}M_{\mathbf{m}}(\mathbf{x};\alpha ,c)\Phi_{\mathbf{m}}(z)\right\} \Phi_{\mathbf{x}}(w)
$$
coincides with the generating function for the multivariate Laguerre polynomials
$$
\sum_{\mathbf{m} \in \mathscr{P}}e^{\tr{w}}L_{\mathbf{m}}^{\left(\alpha -\frac{n}{r}\right)}\left(\left(\frac{1}{c}-1\right)w\right)\Phi_{\mathbf{m}}(z).
$$
Even though this result has not been known even for one variable, many properties for our multivariate discrete special orthogonal polynomials follow from this  and the unitary picture (\ref{eq:unitary picture2}) according to analysis on the symmetric cones. 
%which is a fundamental Lemma of our investigation for the above discrete special orthogonal polynomials. 

Let us describe our scheme in the one variable case more precisely. 
%We note that the real parameter
We put $\alpha>1$, $(\alpha)_{m}:=\frac{\Gamma(\alpha+m)}{\Gamma(\alpha)}=\alpha(\alpha+1)\cdots(\alpha+m-1)$, $\binom{m}{k}=(-1)^{k}\frac{(-m)_{k}}{k!}$, $\mathcal{D}:=\{w \in \mathbb{C}\mid \vert w \vert<1\}$, $T:=\{z \in \mathbb{C}\mid \Re{z}>0\}$, $m$ is the Lebesgue measure on $\mathbb{C}$. 
Further, we introduce the following function spaces and their complete orthogonal bases.  
\\
{\bf{(1)}}\,\,$\psi_{m}^{(\alpha)}$\,;\,exponential multiplied by the Laguerre polynomials 
\begin{align}
L^{2}_{\alpha}(\mathbb{R}_{>0})&:=\{\psi:\mathbb{R}_{>0} \longrightarrow \mathbb{C} \mid \|\psi\|_{\alpha,\mathbb{R}_{>0}}^{2}<\infty\}, \nonumber \\
\|\psi\|_{\alpha,\mathbb{R}_{>0}}^{2}&:=\frac{2^{\alpha}}{\Gamma(\alpha)}\int_{0}^{\infty}|\psi(u)|^{2}u^{\alpha -1}\,du, \nonumber \\
\psi_{m}^{(\alpha)}(u)&:=e^{-u}L_{m}^{(\alpha-1)}(2u)%=\frac{(\alpha)_{m}}{m!}e^{-u}{_{1}F_1}\left(\begin{matrix}-m\\ \alpha  \end{matrix};2u\right)
=\frac{(\alpha)_{m}}{m!}e^{-u}\sum_{k=0}^{m}(-1)^{k}\binom{m}{k}\frac{1}{(\alpha)_{k}}(2u)^{k}.\nonumber
\end{align}
%Here, $(\alpha)_{m}:=\frac{\Gamma(\alpha+m)}{\Gamma(\alpha)}=\alpha(\alpha+1)\cdots(\alpha+m-1)$.
\noindent
{\bf{(2)}}\,\,$F_{m}^{(\alpha)}$\,;\,Cayley transform of the polynomials
%Let $T:=\{z \in \mathbb{C}\mid \Re{z}>0\}$ be the right half-plane and 
\begin{align}
\mathcal{H}^{2}_{\alpha}(T)&:=\{F:T \longrightarrow \mathbb{C} \mid F \text{ is analytic in $T$ and }\|F\|_{\alpha,T}^{2}<\infty\}, \nonumber \\
\|F\|_{\alpha,T}^{2}&:=\frac{\alpha -1}{4\pi}\int_{T}|F(z)|^{2}x^{\alpha -2}\,m(dz), \nonumber \\
F_{m}^{(\alpha)}(z)&:=\frac{(\alpha)_{m}}{m!}\left(\frac{1+z}{2}\right)^{-\alpha}\left(\frac{z-1}{z+1}\right)^{m}. \nonumber
\end{align}
{\bf{(3)}}\,\,$f_{m}^{(\alpha)}$\,;\,monomials \\
\begin{align}
\mathcal{H}^{2}_{\alpha}(\mathcal{D})&:=\{f:\mathcal{D} \longrightarrow \mathbb{C} \mid f \text{ is analytic in $\mathcal{D}$ and }\|f\|_{\alpha,\mathcal{D}}^{2}<\infty\}, \nonumber \\
\|f\|_{\alpha,\mathcal{D}}^{2}&:=\frac{\alpha -1}{\pi}\int_{\mathcal{D}}|f(w)|^{2}(1-|w|^{2})^{\alpha -2}\,m(dw), \nonumber \\
f_{m}^{(\alpha)}(w)&:=\frac{(\alpha)_{m}}{m!}w^{m}.\nonumber 
\end{align}
We remark that 
\begin{equation}
\|\psi_{m}^{(\alpha)}\|_{\alpha,\mathbb{R}_{>0}}^{2}=\|F_{m}^{(\alpha)}\|_{\alpha,T}^{2}=\|f_{m}^{(\alpha)}\|_{\alpha,\mathcal{D}}^{2}
=\frac{(\alpha)_{m}}{m!}. \nonumber
\end{equation}
Furthermore, the following unitary isomorphisms are known. \\
\underline{Modified Laplace transform}
\begin{align}
\mathcal{L}_{\alpha}:L^{2}_{\alpha}(\mathbb{R}_{>0}) \xrightarrow{\simeq}  \mathcal{H}^{2}_{\alpha}(T),
\,\,\,(\mathcal{L}_{\alpha}\psi)(z):=\frac{2^{\alpha}}{\Gamma(\alpha)}\int_{0}^{\infty}e^{-zu}u^{\alpha-1}\psi(u)\,du.\nonumber
\end{align}
\underline{Modified Cayley transform}
\begin{align}
C_{\alpha}^{-1}:\mathcal{H}^{2}_{\alpha}(T) \xrightarrow{\simeq} \mathcal{H}^{2}_{\alpha}(\mathcal{D}),\,\,\,(C_{\alpha}^{-1}F)(w):=(1-w)^{-\alpha}F\left(\frac{1+w}{1-w}\right). \nonumber
\end{align}
To summarize, we obtain the following picture given by the unitary transformations. 
\begin{align}
\label{eq:unitary picture}
\begin{array}{cccccc}
L^{2}_{\alpha}(\mathbb{R}_{>0}) & \xrightarrow[\mathcal{L}_{\alpha}]{\simeq} & \mathcal{H}^{2}_{\alpha}(T) & \xrightarrow[C_{\alpha}^{-1}]{\simeq} & \mathcal{H}^{2}_{\alpha}(\mathcal{D}). & \\
\rotatebox{90}{$\in$}  & & \rotatebox{90}{$\in$} & & \rotatebox{90}{$\in$} & \\
\psi_{m}^{(\alpha)}  & \longmapsto & F_{m}^{(\alpha)}& \longmapsto & f_{m}^{(\alpha)} &  \\
{\bf{(1)}} &  & {\bf{(2)}} &  & {\bf{(3)}} & 
\end{array}
\end{align}

On the other hand, by elementary calculation, we have
\begin{align}
\label{eq:key lem prot}
e^{-\frac{1+c}{1-c}u}\sum_{x \geq 0}\frac{1}{x!}\left(\frac{2c}{1-c}\right)^{x}
\left\{\sum_{m \geq 0}\frac{(\alpha)_{m}}{m!}M_{m}(x;\alpha ,c)z^{m}\right\}u^{x}
&=\sum_{m\geq 0}\psi_{m}^{(\alpha)}(u)z^{m} \nonumber \\
&=(1-z)^{-\alpha}e^{-u\frac{1+z}{1-z}}. 
\end{align}
It is interesting to note that there is a correspondence between Laguerre and Meixner polynomials. 
The former orthogonality is defined by the integral on $\mathbb{R}_{\geq 0}$ and the latter is defined by the summation on non negative integers.

From (\ref{eq:unitary picture}) and (\ref{eq:key lem prot}), for the Meixner polynomials, 
we derive {\bf{(a)}}\,generating function, {\bf{(b)}}\,orthogonality relation, {\bf{(c)}}\,difference equation and recurrence formula as follows.

\noindent
{\bf{(a)}}\,By comparing the coefficients of $u$ on the first equality of (\ref{eq:key lem prot}), 
$$
(1-z)^{-\alpha}\left(\frac{1-\frac{1}{c}z}{1-z}\right)^{x}=\sum_{m \geq 0}\frac{(\alpha)_{m}}{m!}M_{m}(x;\alpha ,c)z^{m}.
$$
{\bf{(b)}}\,By applying the unitary transformations $C_{\alpha}^{-1}\circ \mathcal{L}_{\alpha}$ in (\ref{eq:unitary picture}) to (\ref{eq:key lem prot}), we have
\begin{align}
\label{eq:key lem prot2}
(1-c)^{\alpha}\sum_{x\geq 0}\frac{(\alpha)_{x}}{x!}c^{x}\left\{\sum_{m \geq 0}\frac{(\alpha)_{m}}{m!}M_{m}(x;\alpha ,c)z^{m}\right\}
(1-cw)^{-\alpha}\left(\frac{1-w}{1-cw}\right)^{x}
\!\!&=\sum_{m \geq 0}\frac{(\alpha)_{m}}{m!}w^{m}z^{m} \nonumber \\
&=(1-wz)^{-\alpha}. 
\end{align}
We remark that the generating functions of the Meixner polynomials appear in the top left hand side of (\ref{eq:key lem prot2}). 
Hence, by using the generating functions of the Meixner polynomials and comparing the coefficients of $w$ and $z$ in (\ref{eq:key lem prot2}), 
we have the orthogonal relation for the Meixner polynomials. 
$$
\sum_{x\geq 0}\frac{(\alpha)_{x}}{x!}c^{x}M_{m}(x;\alpha ,c)M_{n}(x;\alpha ,c)
=\frac{c^{-m}}{(1-c)^{\alpha}}\frac{m!}{(\alpha)_{m}}\delta_{m,n}.
$$
{\bf{(c)}}\,We recall the differential operator $D_{\alpha}^{(1)}=-u\partial_{u}^{2}-\alpha\partial_{u}+u-\alpha$ which satisfies $D_{\alpha}^{(1)}\psi_{m}^{(\alpha)}(u)=2m\psi_{m}^{(\alpha)}(u)$. 
Therefore, by applying $\frac{c-1}{2}e^{\frac{1+c}{1-c}u}D_{\alpha}^{(1)}$ to (\ref{eq:key lem prot2}) and comparing the coefficients of $z$ and $u$, 
we obtain the following difference equation which is equivalent to a recurrence formula.  
\begin{align}
(c-1)mM_{m}(x;\alpha,c)&=(x+\alpha)cM_{m}(x+1;\alpha,c) \nonumber \\
 &\quad -(x+(x+\alpha)c)M_{m}(x;\alpha,c) \nonumber \\
 &\quad +xM_{m}(x-1;\alpha,c).\nonumber
\end{align}

The purpose of this article is to provide a multivariate analogue of this scheme which has previously not been known even for the one variable case. 
% Namely, we consider  and multivariate Laguerre polynomials. 
% Using this unitary isomorphism and the modified Cayley transform, we introduce some new multivariate special orthogonal polynomials, which are a multivariate  analogue of circular Jacobi polynomials. 
Let us now describe the content in this paper. 
The basic definitions and fundamental properties of Jordan algebras and symmetric cones, 
and lemmas for analysis on symmetric cones and tube domains have been presented in the first subsection of Section\,2, so that they can be referred to later. 
The next subsection presents a compilation of basic facts for the multivariate Laguerre polynomials and their unitary picture. 
Section\,3 which is the main part of this papers provides a multivariate analogue of the above results for Meixner, Charlier and Krawtchouk polynomials. 
Finally, in Section\,4, we present a conjecture and some problems for a further generalization of the multivariate Meixner, Charlier and Krawtchouk polynomials.

\section{Preliminaries}
Throughout the paper, we denote the ring of rational integers by $\mathbb{Z}$, 
the field of real numbers by $\mathbb{R}$, the field of complex numbers by $\mathbb{C}$. 
Further, we fix a positive integer $r$ and denote the partition set of length $r$ by 
\begin{equation}
\mathscr{P}:=\{\mathbf{m}=(m_{1}, \ldots, m_{r}) \in \mathbb{Z}_{\geq 0}^{r}\mid m_{1}\geq \cdots \geq m_{r}\}.
\end{equation}
For any vector $\mathbf{s}=(s_{1},\ldots,s_{r})\in \mathbb{C}^{r}$, we put 
\begin{align}
\Re{\mathbf{s}}&:=(\Re{s_{1}},\ldots,\Re{s_{r}}), \\
|\mathbf{s}|&:=s_{1}+\cdots+s_{r}, \\
\|\mathbf{s}\|&:=(|s_{1}|,\ldots,|s_{r}|).
\end{align}
Moreover, for $\mathbf{m} \in \mathscr{P}$ 
$$
\mathbf{m}!:=m_{1}!\cdots m_{r}!
$$
and we set $\delta:=(r-1,r-2,\ldots,1,0)$.
% \begin{align}
% \delta&:=(r-1,r-2,\ldots,1,0), \nonumber \\
% \rho&:=(\rho_{1},\ldots,\rho_{r}),\,\,\,\,\rho_{j}:=\frac{d}{4}(2j-r-1). \nonumber 
% \end{align}
Refer to Faraut and Koranyi \cite{FK} for the details in this section.

\subsection{Analysis on symmetric cones}
%A reference for this preliminary section is \cite{FK}. 
Let $\Omega$ be an irreducible symmetric cone in $V$ which is a finite dimensional simple Euclidean Jordan algebra of dimension $n$ as a real vector space and rank $r$. 
The classification of irreducible symmetric cones is well-known. 
Namely, there are four families of classical irreducible symmetric cones $\Pi_{r}(\mathbb{R}), \Pi_{r}(\mathbb{C}), \Pi_{r}(\mathbb{H})$, 
the cones of all $r\times r$ positive definite matrices over $\mathbb{R}$, $\mathbb{C}$ and $\mathbb{H}$, the Lorentz cones $\Lambda_{r}$ and an exceptional cone $\Pi_{3}(\mathbb{O})$ (see \cite{FK}\,p.\,97). 
Also, let $V^{\mathbb{C}}$ be the complexification of $V$. 
For $w,z\in V^{\mathbb{C}}$, we define
\begin{align}
L(w)z&:=wz, \nonumber \\
w\Box{z}&:=L(wz)+[L(w),L(z)], \nonumber \\
P(w,z)&:=L(w)L(z)+L(z)L(w)-L(wz), \nonumber \\
P(w)&:=P(w,w)=2L(w)^{2}-L(w^{2}). \nonumber 
\end{align}
We denote the Jordan trace and determinant of the complex Jordan algebra $V^{\mathbb{C}}$ by $\tr{x}$ and by $\Delta(x)$ respectively.

Fix a Jordan frame $\{c_{1},\ldots,c_{r}\}$ that is a complete system of orthogonal primitive idempotents in $V$ and define the following subspaces:
\begin{align}
V_{j}&:=\{x \in V \mid L(c_{j})x=x\}, \nonumber \\
V_{jk}&:=\left\{x \in V \bigg| L(c_{j})x=\frac{1}{2}x \,\,\text{and}\,\, L(c_{k})x=\frac{1}{2}x\right\}. \nonumber
\end{align}
Then, $V_{j}=\mathbb{R}e_{j}$ for $j=1,\ldots,r$ are $1$-dimensional subalgebras of $V$, 
while the subspaces $V_{jk}$ for $j,k=1,\ldots,r$ with $j<k$ all have a common dimension $d=\dim_{\mathbb{R}}V_{jk}$. 
Then, $V$ has the Peirce decomposition 
\begin{equation}
V=\left(\bigoplus_{j=1}^{r}{V_{j}}\right)\oplus\left(\bigoplus_{j<k}{V_{jk}}\right), \nonumber 
\end{equation}
which is the orthogonal direct sum. 
It follows that $n=r+\frac{d}{2}r(r-1)$. 
Let $G(\Omega)$ denote the automorphism group of $\Omega$ and let $G$ be the identity component in $G(\Omega)$. 
Then, $G$ acts transitively on $\Omega$ and $\Omega \cong G/K$ where $K \in G$ is the isotropy subgroup of the unit element, $e \in V$. 
$K$ is also the identity component in $\Aut(V)$.

For any $x \in V$, there exist $k \in K$ and $\lambda_{1},\ldots, \lambda_{r} \in \mathbb{R}$ such that 
\begin{equation}
x={k}\sum_{j=1}^{r}{\lambda_{j}c_{j}},\,\,\,\,(\lambda_{1}\geq \cdots \geq \lambda_{r}). \nonumber 
\end{equation}
%From this polar decomposition, we obtain the following integral formula (see \cite{FK}\,Theorem\,VI.\,2.3).

As in the case of $V$, we also have the following spectral decomposition for $V^{\mathbb{C}}$. 
Every $z$ in $V^{\mathbb{C}}$ can be written
$$
z=u\sum_{j=1}^{r}\lambda_{j}c_{j},
$$
with $u$ in $U$ which is the identity component of $Str(V^{\mathbb{C}})\cap{U(V^{\mathbb{C}})}$, $\lambda_{1}\geq \cdots\geq \lambda_{r}\geq 0$. 
Moreover, we define the spectral norm of $z \in V^{\mathbb{C}}$ by $|z|=\lambda_{1}$ and introduce the open unit ball $\mathcal{D} \in V^{\mathbb{C}}$ as follows. 
\begin{equation}
\mathcal{D}=\{z \in V^{\mathbb{C}} \mid |z|<1\}. \nonumber
\end{equation}

For $j=1,\ldots,r$, let $e_{j}:=c_{1}+\cdots+c_{j}$, and set 
\begin{equation}
V^{(j)}:=\{x \in V \mid L(e_{j})x=x\}. \nonumber
\end{equation}
Denote the orthogonal projection of $V$ onto the subalgebra $V^{(j)}$ by $P_{j}$, and define 
\begin{equation}
\Delta_{j}(x):=\delta_{j}(P_{j}x) \nonumber   
\end{equation}
for $x \in V$, where $\delta_{j}$ denotes the determinant with respect to $V^{(j)}$. 
In particular, $\delta_{r}=\Delta$. 
Then, $\Delta_{j}$ is a polynomial on $V$ that is homogeneous of degree $j$. 
Let $\mathbf{s}:=(s_{1},\ldots,s_{r}) \in \mathbb{C}^{r}$ and define the function $\Delta_{\mathbf{s}}$ on $V$ by 
\begin{equation}
\Delta_{\mathbf{s}}(x):=\Delta(x)^{s_{r}}\prod_{j=1}^{r-1}\Delta_{j}(x)^{s_{j}-s_{j+1}}. 
\end{equation}
That is the generalized power function on $V$. 
Furthermore, for $\mathbf{m} \in \mathscr{P}$, $\Delta_{\mathbf{m}}$ becomes a polynomial function on $V$, which is homogeneous of degree $|\mathbf{m}|$.

The gamma function $\Gamma_{\Omega}$ for the symmetric cone $\Omega$ is defined, for $\mathbf{s} \in \mathbb{C}^{r}$, with $\Re{s_{j}}>\frac{d}{2}(j-1)\,(j=1,\ldots,r)$ by 
\begin{equation}
\Gamma_{\Omega}(\mathbf{s}):=\int_{\Omega}e^{-{\rm{tr}}(x)}\Delta_{\mathbf{s}}(x)\Delta(x)^{-\frac{n}{r}}\,dx.
\end{equation}
Its evaluation gives 
\begin{equation}
\label{eq:def of gamma on cone}
\Gamma_{\Omega}(\mathbf{s})=(2\pi)^{\frac{n-r}{2}}\prod_{j=1}^{r}\Gamma\left(s_{j}-\frac{d}{2}(j-1)\right).
\end{equation}
Hence, $\Gamma_{\Omega}$ extends analytically as a meromorphic function on $\mathbb{C}^{r}$.

For $\mathbf{s} \in \mathbb{C}^{r}$ and $\mathbf{m} \in \mathscr{P}$, we define the generalized shifted factorial by 
\begin{equation}
(\mathbf{s})_{\mathbf{m}}:=\frac{\Gamma_{\Omega}(\mathbf{s}+\mathbf{m})}{\Gamma_{\Omega}(\mathbf{s})}.
\end{equation}
It follows from (\ref{eq:def of gamma on cone}) that 
\begin{equation}
(\mathbf{s})_{\mathbf{m}}=\prod_{j=1}^{r}\left(s_{j}-\frac{d}{2}(j-1)\right)_{m_{j}}.
\end{equation}
% \begin{lem}
% \label{thm:ineq for generalized shifted factorial}
% If $\mathbf{s} \in \mathbb{C}^{r}, \mathbf{m},\mathbf{k} \in \mathscr{P}$ and $\mathbf{m}\supset \mathbf{k}$, then
% \begin{equation}
% \left|\frac{(\mathbf{s})_{\mathbf{m}}}{(\mathbf{s})_{\mathbf{k}}}\right| \leq \frac{(\|\mathbf{s}\|+2\left(\frac{n}{r}-1\right))_{\mathbf{m}}}{(\|\mathbf{s}\|+2\left(\frac{n}{r}-1\right))_{\mathbf{k}}}.
% \end{equation}
% \end{lem}
% \begin{proof}
% We remark that for any $s \in \mathbb{C}, N \in \mathbb{Z}_{\geq 0}$ and $j=1,\ldots,r$, the following is satisfied.
% \begin{equation}
% \label{eq:ineq for generalized shifted factorial prot}
% \left|s+N-\frac{d}{2}(j-1)\right|\leq |s|+N+2\left(\frac{n}{r}-1\right)-\frac{d}{2}(j-1)
% =|s|+N+\frac{d}{2}(2r-j-1). \nonumber
% \end{equation}
% Hence, 
% \begin{align}
% \left|\frac{(\mathbf{s})_{\mathbf{m}}}{(\mathbf{s})_{\mathbf{k}}}\right|
% &=\prod_{j=1}^{r}\left|\left(s_{j}+k_{j}-\frac{d}{2}(j-1)\right)_{m_{j}-k_{j}}\right| \nonumber \\
% &\leq \prod_{j=1}^{r}\left(|s_{j}|+k_{j}+2\left(\frac{n}{r}-1\right)-\frac{d}{2}(j-1)\right)_{m_{j}-k_{j}} \nonumber \\
% &=\frac{\left(\|\mathbf{s}\|+2\left(\frac{n}{r}-1\right)\right)_{\mathbf{m}}}{\left(\|\mathbf{s}\|+2\left(\frac{n}{r}-1\right)\right)_{\mathbf{k}}}. \nonumber 
% \end{align}
% \end{proof}
\begin{lem}
\label{thm:ineq for generalized shifted factorial}
If $\mathbf{s} \in \mathbb{C}^{r}, \mathbf{m},\mathbf{k} \in \mathscr{P}$ and $\mathbf{m}\supset \mathbf{k}$, then
\begin{equation}
\label{eq:ineq for generalized shifted factorial}
\left|\frac{(\mathbf{s})_{\mathbf{m}}}{(\mathbf{s})_{\mathbf{k}}}\right| \leq \frac{(\|\mathbf{s}\|+d(r-1))_{\mathbf{m}}}{(\|\mathbf{s}\|+d(r-1))_{\mathbf{k}}}.
\end{equation}
\end{lem}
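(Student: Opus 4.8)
The plan is to reduce the claim to a product of one–dimensional estimates by invoking the explicit product formula for $(\mathbf{s})_{\mathbf{m}}$ displayed just above the lemma. Applying that formula to both sides (with $\mathbf{s}$ replaced by the real vector $\|\mathbf{s}\|+d(r-1)$ on the right) lets everything factor over $j=1,\dots,r$. Writing $a_{j}:=s_{j}-\frac{d}{2}(j-1)$ and $b_{j}:=|s_{j}|+d(r-1)-\frac{d}{2}(j-1)$, it therefore suffices to prove, for each $j$, the single–variable inequality
$$
\left|\frac{(a_{j})_{m_{j}}}{(a_{j})_{k_{j}}}\right|\le \frac{(b_{j})_{m_{j}}}{(b_{j})_{k_{j}}}.
$$
Because $\mathbf{m}\supset\mathbf{k}$ means $m_{j}\ge k_{j}$ for all $j$, each ratio telescopes into $(a_{j})_{m_{j}}/(a_{j})_{k_{j}}=\prod_{i=k_{j}}^{m_{j}-1}(a_{j}+i)$, and likewise for $b_{j}$. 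Thus the whole lemma collapses to the pointwise bound
$$
\left|s_{j}-\tfrac{d}{2}(j-1)+i\right|\le |s_{j}|+d(r-1)-\tfrac{d}{2}(j-1)+i \qquad (k_{j}\le i\le m_{j}-1),
$$
the right-hand factors being nonnegative because $j\le r$ forces $d(r-1)-\frac{d}{2}(j-1)\ge 0$.

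For this pointwise step I would first apply the triangle inequality to the left side, $\bigl|s_{j}-\frac{d}{2}(j-1)+i\bigr|\le |s_{j}|+\bigl|i-\frac{d}{2}(j-1)\bigr|$, which reduces matters to showing $\bigl|i-\frac{d}{2}(j-1)\bigr|\le i+d(r-1)-\frac{d}{2}(j-1)$. Setting $c:=\frac{d}{2}(j-1)\ge 0$, this is just $|i-c|\le i-c+d(r-1)$. When $i\ge c$ it is immediate since $d(r-1)\ge 0$; when $i<c$ it reads $2(c-i)\le d(r-1)$, which follows from $i\ge 0$ together with $2c=d(j-1)\le d(r-1)$. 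Recombining the factors first over the index $i$ and then over $j$ recovers the asserted inequality.

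The argument is elementary once the factorization is in place, and the only point that genuinely requires care — the single real obstacle — is the case $i<\frac{d}{2}(j-1)$ of the pointwise bound. There the crude estimate fails, and one must exploit both $i\ge 0$ and the rank constraint $j\le r$; this is precisely why the correction term on the right is $d(r-1)$ rather than the smaller $\frac{d}{2}(j-1)$, so that enough slack is available to absorb the term $c-i$.
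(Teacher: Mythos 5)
Your proof is correct and takes essentially the same route as the paper: both reduce the claim via the product formula for $(\mathbf{s})_{\mathbf{m}}$, telescope the ratio into linear factors, and conclude from the pointwise bound $\left|s_{j}+i-\frac{d}{2}(j-1)\right|\le |s_{j}|+i+d(r-1)-\frac{d}{2}(j-1)$. The only difference is that the paper asserts this pointwise inequality as an unproved remark, whereas you supply the triangle-inequality and case analysis ($i\ge \frac{d}{2}(j-1)$ versus $i<\frac{d}{2}(j-1)$) that justifies it.
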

\begin{proof}
We remark that for any $s \in \mathbb{C}, N \in \mathbb{Z}_{\geq 0}$ and $j=1,\ldots,r$, the following is satisfied.
\begin{equation}
\label{eq:ineq for generalized shifted factorial prot}
\left|s+N-\frac{d}{2}(j-1)\right|\leq |s|+N+d(r-1)-\frac{d}{2}(j-1)
=|s|+N+\frac{d}{2}(2r-j-1). \nonumber
\end{equation}
Hence, 
\begin{align}
\left|\frac{(\mathbf{s})_{\mathbf{m}}}{(\mathbf{s})_{\mathbf{k}}}\right|
&=\prod_{j=1}^{r}\left|\left(s_{j}+k_{j}-\frac{d}{2}(j-1)\right)_{m_{j}-k_{j}}\right| \nonumber \\
&\leq \prod_{j=1}^{r}\left(|s_{j}|+k_{j}+d(r-1)-\frac{d}{2}(j-1)\right)_{m_{j}-k_{j}} \nonumber \\
&=\frac{\left(\|\mathbf{s}\|+d(r-1)\right)_{\mathbf{m}}}{\left(\|\mathbf{s}\|+d(r-1)\right)_{\mathbf{k}}}. \nonumber 
\end{align}
\end{proof}
\begin{cor}
\label{thm:ineq for generalized shifted factorial 2}
If $\mathbf{s} \in \mathbb{C}^{r}, \mathbf{m} \in \mathscr{P}$, then
\begin{equation}
\label{eq:ineq for generalized shifted factorial2}
|(\mathbf{s})_{\mathbf{m}}| \leq (\|\mathbf{s}\|+d(r-1))_{\mathbf{m}} \leq \prod_{j=1}^{r}(|s_{j}|+d(r-1))_{m_{j}}.
\end{equation}
\end{cor}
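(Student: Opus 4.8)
The plan is to derive both inequalities from facts already at hand: the left-hand inequality is simply the special case $\mathbf{k}=\mathbf{0}$ of Lemma~\ref{thm:ineq for generalized shifted factorial}, and the right-hand inequality is an elementary monotonicity statement about ordinary Pochhammer symbols, read off from the explicit product formula for $(\mathbf{s})_{\mathbf{m}}$.

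For the left-hand inequality, I would take $\mathbf{k}=(0,\ldots,0)\in\mathscr{P}$ in Lemma~\ref{thm:ineq for generalized shifted factorial}. Since $\mathbf{m}\supset\mathbf{0}$ holds for every $\mathbf{m}\in\mathscr{P}$, and since $(\mathbf{s})_{\mathbf{0}}=1$ as well as $(\|\mathbf{s}\|+d(r-1))_{\mathbf{0}}=1$, the inequality of the Lemma collapses to exactly $|(\mathbf{s})_{\mathbf{m}}|\le(\|\mathbf{s}\|+d(r-1))_{\mathbf{m}}$, with the denominators on both sides equal to $1$.

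For the right-hand inequality, I would expand the middle term using the product formula $(\mathbf{s})_{\mathbf{m}}=\prod_{j=1}^{r}\bigl(s_{j}-\tfrac{d}{2}(j-1)\bigr)_{m_{j}}$. Writing $\|\mathbf{s}\|=(|s_{1}|,\ldots,|s_{r}|)$, the middle term becomes $\prod_{j=1}^{r}\bigl(|s_{j}|+d(r-1)-\tfrac{d}{2}(j-1)\bigr)_{m_{j}}$. Factor by factor I would compare the base $|s_{j}|+d(r-1)-\tfrac{d}{2}(j-1)$ with $|s_{j}|+d(r-1)$; since $\tfrac{d}{2}(j-1)\ge 0$ the former never exceeds the latter, and both are nonnegative, so monotonicity of the map $a\mapsto(a)_{m_{j}}=a(a+1)\cdots(a+m_{j}-1)$ on $[0,\infty)$ gives $\bigl(|s_{j}|+d(r-1)-\tfrac{d}{2}(j-1)\bigr)_{m_{j}}\le(|s_{j}|+d(r-1))_{m_{j}}$. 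Taking the product over $j=1,\ldots,r$ yields the claim.

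There is essentially no serious obstacle here, this being a direct corollary; the only point that warrants care is checking that every Pochhammer base appearing is genuinely nonnegative, which holds because $d\ge 0$ and $|s_{j}|\ge 0$, so that the monotonicity argument applies with no sign ambiguity.
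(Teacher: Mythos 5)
Your proof is correct and matches the paper's intent: the paper states this as an immediate corollary of Lemma~\ref{thm:ineq for generalized shifted factorial} (with no written proof), and taking $\mathbf{k}=\mathbf{0}$ there gives precisely the left-hand inequality, while the right-hand one follows, exactly as you argue, from the product formula $(\mathbf{s})_{\mathbf{m}}=\prod_{j=1}^{r}\bigl(s_{j}-\tfrac{d}{2}(j-1)\bigr)_{m_{j}}$ and monotonicity of $a\mapsto(a)_{m}$ on $[0,\infty)$. Your care in checking that each base $|s_{j}|+d(r-1)-\tfrac{d}{2}(j-1)\ge\tfrac{d}{2}(r-1)\ge 0$ is exactly the point needed to make the factor-by-factor comparison legitimate.
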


The space $\mathcal{P}(V)$ of polynomials on $V$ has the following decomposition. 
\begin{equation}
\mathcal{P}(V)=\bigoplus_{\mathbf{m} \in \mathscr{P}}\mathcal{P}_{\mathbf{m}}, \nonumber 
\end{equation}
where the subspaces $\mathcal{P}_{\mathbf{m}}$ are mutually inequivalent, and finite dimensional irreducible $G$-modules. 
Further, their dimensions are denoted by $d_{\mathbf{m}}$. 
For $d_{\mathbf{m}}$, the following formula is known (see, \cite{Up}\,Lemma 2.6 or \cite{FK}\,p.\,315). 
\begin{lem}
\label{thm:Upmeier Lem 2.6}
For any $\mathbf{m} \in \mathscr{P}$,
\begin{align}
\label{eq:Upmeier Lem 2.6}
d_{\mathbf{m}}
&=\frac{c(-\rho)}{c(\rho -\mathbf{m})c(\mathbf{m}-\rho)} \\
&=\prod_{1\leq p<q\leq r}\frac{m_{p}-m_{q}+\frac{d}{2}(q-p)}{\frac{d}{2}(q-p)}
\frac{B\left(m_{p}-m_{q},\frac{d}{2}(q-p-1)+1\right)}{B\left(m_{p}-m_{q},\frac{d}{2}(q-p+1)\right)} \\
&=\prod_{j=1}^{r}\frac{\Gamma\left(\frac{d}{2}\right)}{\Gamma\left(\frac{d}{2}j\right)\Gamma\left(\frac{d}{2}(j-1)+1\right)} \nonumber \\
{} & \quad \cdot
\prod_{1\leq p<q\leq r}\left(m_{p}-m_{q}+\frac{d}{2}(q-p)\right)\frac{\Gamma\left( m_{p}-m_{q}+\frac{d}{2}(q-p+1)\right)}{\Gamma\left(m_{p}-m_{q}+\frac{d}{2}(q-p-1)+1\right)}.
\end{align}
Here, $\rho=(\rho_{1},\ldots,\rho_{r})$, $\rho_{j}:=\frac{d}{4}(2j-r-1)$, and $c$ is the Harish-Chandra function:
$$
c(\mathbf{s})=\prod_{1\leq p<q\leq r}\frac{B\left(s_{q}-s_{p}, \frac{d}{2}\right)}{B\left(\frac{d}{2}(q-p), \frac{d}{2}\right)}.
$$
In particular, for $d=2$ 
\begin{equation}
\label{eq:d and Schur}
d_{\mathbf{m}}=\prod_{1\leq p<q\leq r}\left(\frac{m_{p}-m_{q}+q-p}{q-p}\right)^{2}=s_{\mathbf{m}}(1,\ldots,1)^{2}.
\end{equation}
Here, $s_{\mathbf{m}}$ is the Schur polynomial corresponding to $\mathbf{m} \in \mathscr{P}$ defined by 
$$
s_{\mathbf{m}}(\lambda_{1},\ldots,\lambda_{r}):=\frac{\det(\lambda_{j}^{m_{k}+r-k})}{\det(\lambda_{j}^{r-k})}.
$$
\end{lem}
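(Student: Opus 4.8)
The plan is to take the first equality $d_{\mathbf{m}}=c(-\rho)/\bigl(c(\rho-\mathbf{m})c(\mathbf{m}-\rho)\bigr)$ as the known input (Upmeier) and to derive the remaining expressions by direct manipulation of the Harish-Chandra function. The one structural fact I would use repeatedly is $\rho_{q}-\rho_{p}=\frac{d}{2}(q-p)$ for $p<q$, which is immediate from $\rho_{j}=\frac{d}{4}(2j-r-1)$. Substituting $\mathbf{s}=-\rho,\ \rho-\mathbf{m},\ \mathbf{m}-\rho$ into $c(\mathbf{s})=\prod_{p<q}B(s_{q}-s_{p},\tfrac{d}{2})/B(\tfrac{d}{2}(q-p),\tfrac{d}{2})$, the normalizing denominators $B(\tfrac{d}{2}(q-p),\tfrac{d}{2})$ and the $\Gamma(\tfrac{d}{2})$ factors occur with equal total multiplicity in numerator and denominator of the quotient, so they cancel and leave a single product over pairs $p<q$.

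First I would reduce each pair factor. Writing $a=\tfrac{d}{2}(q-p)$ and $n=m_{p}-m_{q}\ge 0$, the three relevant Beta arguments are $-a$ (from $-\rho$), $a+n$ (from $\rho-\mathbf{m}$) and $-a-n$ (from $\mathbf{m}-\rho$); expanding every $B$ through $B(x,y)=\Gamma(x)\Gamma(y)/\Gamma(x+y)$ and applying the Pochhammer identities $\Gamma(z+n)/\Gamma(z)=(z)_{n}$ and $(z+1)_{n}/(z)_{n}=(z+n)/z$ (the two occurrences of $(-1)^{n}$ cancelling), the pair factor collapses to $\frac{a+n}{a}\cdot\frac{(a+\frac{d}{2})_{n}}{(a-\frac{d}{2}+1)_{n}}$. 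Since $a+\tfrac{d}{2}=\tfrac{d}{2}(q-p+1)$ and $a-\tfrac{d}{2}+1=\tfrac{d}{2}(q-p-1)+1$, rewriting the two Pochhammer symbols back as Gamma quotients and cancelling the common $\Gamma(n)$ produces exactly the Beta ratio $B(n,\tfrac{d}{2}(q-p-1)+1)/B(n,\tfrac{d}{2}(q-p+1))$, which is the second displayed line.

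Next I would pass to the third and fourth lines by separating the $\mathbf{m}$-dependent and $\mathbf{m}$-independent parts. Expanding the Beta ratio into Gammas isolates the $\mathbf{m}$-dependent factor $(m_{p}-m_{q}+\tfrac{d}{2}(q-p))\,\Gamma(m_{p}-m_{q}+\tfrac{d}{2}(q-p+1))/\Gamma(m_{p}-m_{q}+\tfrac{d}{2}(q-p-1)+1)$, matching the last product verbatim; the remaining constant is $\prod_{p<q}\frac{1}{\frac{d}{2}(q-p)}\frac{\Gamma(\frac{d}{2}(q-p-1)+1)}{\Gamma(\frac{d}{2}(q-p+1))}$, which must be shown to equal $\prod_{j=1}^{r}\frac{\Gamma(d/2)}{\Gamma(\frac{d}{2}j)\Gamma(\frac{d}{2}(j-1)+1)}$. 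I expect this reindexing to be the main obstacle: grouping pairs by the common value $k=q-p$ (there are $r-k$ of them) turns the left side into $\prod_{k=1}^{r-1}[\cdots]^{r-k}$, and I would verify the equality by induction on $r$, the inductive step reducing to the telescoping identity $\prod_{k=1}^{r}\frac{1}{\frac{d}{2}k}\frac{\Gamma(\frac{d}{2}(k-1)+1)}{\Gamma(\frac{d}{2}(k+1))}=\frac{\Gamma(d/2)}{\Gamma(\frac{d}{2}(r+1))\Gamma(\frac{d}{2}r+1)}$ contributed by the new pairs $(p,r+1)$; this last identity follows by writing $\Gamma(\frac{d}{2}j+1)=\frac{d}{2}j\,\Gamma(\frac{d}{2}j)$ and collapsing the resulting product of Gamma values.

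Finally, for $d=2$ I would set $\tfrac{d}{2}=1$ directly in the pair factor $\frac{a+n}{a}\cdot\frac{(a+\frac{d}{2})_{n}}{(a-\frac{d}{2}+1)_{n}}$: with $a=q-p$ the Pochhammer ratio becomes $(q-p+1)_{n}/(q-p)_{n}=(q-p+n)/(q-p)$, so the whole factor is $\bigl(\frac{m_{p}-m_{q}+q-p}{q-p}\bigr)^{2}$, giving $d_{\mathbf{m}}=\prod_{p<q}\bigl(\frac{m_{p}-m_{q}+q-p}{q-p}\bigr)^{2}$. Identifying this with $s_{\mathbf{m}}(1,\ldots,1)^{2}$ is then just the classical Weyl dimension evaluation $s_{\mathbf{m}}(1,\ldots,1)=\prod_{p<q}\frac{m_{p}-m_{q}+q-p}{q-p}$, obtained from the bialternant definition of $s_{\mathbf{m}}$ by the confluent limit $\lambda_{j}\to 1$.
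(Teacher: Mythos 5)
Your derivation is correct, but there is nothing in the paper to compare it against: the paper does not prove this lemma at all, it simply quotes the formula from Upmeier (Lemma 2.6) and Faraut--Koranyi (p.\,315). What you have supplied is exactly the verification the paper delegates to the references, namely that the three displayed expressions agree, taking the first equality $d_{\mathbf{m}}=c(-\rho)/\bigl(c(\rho-\mathbf{m})c(\mathbf{m}-\rho)\bigr)$ as the known input. Your computation checks out. With $a=\tfrac{d}{2}(q-p)$ and $n=m_{p}-m_{q}$, the pair factor of the quotient is $\frac{B(-a,d/2)\,B(a,d/2)}{B(a+n,d/2)\,B(-a-n,d/2)}$; note that the normalizers do \emph{not} cancel completely as your prose claims --- one copy of $B\bigl(\tfrac{d}{2}(q-p),\tfrac{d}{2}\bigr)$ survives in the numerator of each pair factor --- but the value you state, $\frac{a+n}{a}\cdot\frac{(a+\frac{d}{2})_{n}}{(a-\frac{d}{2}+1)_{n}}$, is precisely the value of this full ratio (the two $(-1)^{n}$'s from $(-a-n)_{n}$ and $(\tfrac{d}{2}-a-n)_{n}$ indeed cancel), so your bookkeeping is right even though the wording is loose. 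The identification with the Beta ratio of the second line, the separation into the $\mathbf{m}$-dependent Gamma quotient and the constant $\prod_{p<q}\frac{1}{\frac{d}{2}(q-p)}\frac{\Gamma(\frac{d}{2}(q-p-1)+1)}{\Gamma(\frac{d}{2}(q-p+1))}$, and the induction on $r$ whose step is the telescoping identity $\prod_{k=1}^{r}\frac{1}{\frac{d}{2}k}\frac{\Gamma(\frac{d}{2}(k-1)+1)}{\Gamma(\frac{d}{2}(k+1))}=\frac{\Gamma(d/2)}{\Gamma(\frac{d}{2}(r+1))\Gamma(\frac{d}{2}r+1)}$ (immediate from $\Gamma(\tfrac{d}{2}j+1)=\tfrac{d}{2}j\,\Gamma(\tfrac{d}{2}j)$) are all sound, as is the $d=2$ specialization via the Weyl dimension evaluation of $s_{\mathbf{m}}(1,\ldots,1)$. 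One caveat worth recording explicitly: when $m_{p}=m_{q}$ the Beta functions $B(0,\cdot)$ in the second line are singular, and for even $d$ the factor $\Gamma(-a)$ hidden in $c(-\rho)$ and $c(\mathbf{m}-\rho)$ hits poles, so all of these equalities must be read as ratios interpreted in the limiting sense; your Pochhammer formulation handles this automatically, and the issue is inherent in the statement of the lemma rather than introduced by your argument.
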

% The following lemma is necessary to evaluate the Fourier transform of the multivariate Laguerre polynomial.
The following lemma is necessary to evaluate the Laplace transform of the multivariate Laguerre polynomial.
\begin{lem}[$\cite{FK}$\,Theorem\,XI.\,$2.3$]
\label{thm:int formula}
For $p \in \mathcal{P}_{\mathbf{m}}$, $\Re{\alpha}>(r-1)\frac{d}{2}$, and $y \in \Omega +iV$,
\begin{equation}
\int_{\Omega}e^{-(y|x)}p(x)\Delta(x)^{\alpha -\frac{n}{r}}\,dx=\Gamma_{\Omega}(\mathbf{m}+\alpha)\Delta(y)^{-\alpha}p(y^{-1}).
\end{equation}
Here, $\alpha$ is regarded as $(\alpha,\ldots,\alpha) \in \mathbb{C}^{r}$. 
\end{lem}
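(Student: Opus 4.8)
The plan is to establish the formula first for the single generating polynomial $p=\Delta_{\mathbf{m}}$, then to spread it over all of $\mathcal{P}_{\mathbf{m}}$ using the $G$-equivariance of both sides, and finally to extend from $y\in\Omega$ to $y\in\Omega+iV$ by analytic continuation. I will freely use the following standard facts about the structure group of $V$: for $g\in G$ there is a character $\chi(g)=\Delta(ge)$ with $\Delta(gx)=\chi(g)\Delta(x)$; the linear Jacobian of $x\mapsto gx$ is $\Det(g)=\chi(g)^{n/r}$; the adjoint $g^{*}$ again belongs to $G(\Omega)$ with $\chi(g^{*})=\chi(g)$; and the Jordan-inverse identity $(g^{*}y)^{-1}=g^{-1}(y^{-1})$ holds. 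I also use that $\mathcal{P}_{\mathbf{m}}$ is an irreducible $G$-module containing $\Delta_{\mathbf{m}}$, so that the translates $\{x\mapsto\Delta_{\mathbf{m}}(g^{-1}x):g\in G\}$ span $\mathcal{P}_{\mathbf{m}}$.

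First I would treat $p=\Delta_{\mathbf{m}}$. By multiplicativity of the generalized power function, $\Delta_{\mathbf{m}}(x)\Delta(x)^{\alpha}=\Delta_{\mathbf{m}+\alpha}(x)$, so the integrand equals $e^{-(y|x)}\Delta_{\mathbf{m}+\alpha}(x)\Delta(x)^{-n/r}$, which is precisely the Laplace transform of a power function. Invoking the Laplace transform of the generalized power function \cite{FK},
\[
\int_{\Omega}e^{-(y|x)}\Delta_{\mathbf{s}}(x)\Delta(x)^{-\frac{n}{r}}\,dx=\Gamma_{\Omega}(\mathbf{s})\Delta_{\mathbf{s}}(y^{-1}),\qquad \Re{s_{j}}>\frac{d}{2}(j-1),
\]
with $\mathbf{s}=\mathbf{m}+\alpha$ (the convergence region being guaranteed by $\Re{\alpha}>(r-1)\frac{d}{2}$ together with $m_{j}\ge0$) and splitting off $\Delta_{\mathbf{m}+\alpha}(y^{-1})=\Delta_{\mathbf{m}}(y^{-1})\Delta(y)^{-\alpha}$ yields the assertion for $p=\Delta_{\mathbf{m}}$ and every $y\in\Omega$. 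At $y=e$ this collapses to the very definition of $\Gamma_{\Omega}(\mathbf{m}+\alpha)$, a convenient sanity check.

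Next I would propagate the identity to all of $\mathcal{P}_{\mathbf{m}}$. Since both sides are linear in $p$, it suffices to take $p(x)=\Delta_{\mathbf{m}}(g^{-1}x)$ for $g\in G$. Substituting $x=gx'$ (again over $\Omega$, with $dx=\Det(g)\,dx'$) and using $(y|gx')=(g^{*}y|x')$ together with $\Delta(gx')=\chi(g)\Delta(x')$ turns the integral into $\Det(g)\,\chi(g)^{\alpha-\frac{n}{r}}$ times the Step-1 integral evaluated at $g^{*}y$; here $\Det(g)=\chi(g)^{n/r}$ collapses the prefactor to $\chi(g)^{\alpha}$. Applying Step 1 at $g^{*}y\in\Omega$, the relation $\Delta(g^{*}y)^{-\alpha}=\chi(g)^{-\alpha}\Delta(y)^{-\alpha}$ cancels this remaining $\chi(g)^{\alpha}$, while the inverse identity gives $\Delta_{\mathbf{m}}((g^{*}y)^{-1})=\Delta_{\mathbf{m}}(g^{-1}(y^{-1}))=p(y^{-1})$. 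Hence the integral equals $\Gamma_{\Omega}(\mathbf{m}+\alpha)\Delta(y)^{-\alpha}p(y^{-1})$, and by linearity the formula holds for every $p\in\mathcal{P}_{\mathbf{m}}$ and $y\in\Omega$.

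Finally I would pass to the tube $\Omega+iV$. The left-hand side is holomorphic there: for $\Re{y}\in\Omega$ one dominates $|e^{-(y|x)}|=e^{-(\Re{y}\,|\,x)}$, so the hypothesis $\Re{\alpha}>(r-1)\frac{d}{2}$ makes the integral absolutely convergent and differentiation under the integral sign legitimate. The right-hand side $\Delta(y)^{-\alpha}p(y^{-1})$ is holomorphic wherever $\Delta(y)\neq0$, in particular on $\Omega+iV$. Two holomorphic functions on the connected tube that agree on the open, totally real set $\Omega$ must coincide, giving the statement for all $y\in\Omega+iV$. I expect the main obstacle to be the equivariance bookkeeping of the propagation step: one must apply the structure-group transformation laws for $\Delta$ and for the Jordan inverse with exactly the right exponents and check that every character factor cancels; a secondary technical point is justifying the holomorphy and the differentiation under the integral needed for the continuation.
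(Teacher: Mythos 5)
The paper does not actually prove this lemma: it is imported verbatim from Faraut--Koranyi (\cite{FK}, Theorem~XI.2.3), so there is no in-paper argument to compare against. Your proof is correct, and it is in substance the standard argument behind the cited theorem: (i) for $p=\Delta_{\mathbf{m}}$ the claim is exactly the Laplace transform of the generalized power function with $\mathbf{s}=\mathbf{m}+\alpha$, and the hypothesis $\Re{\alpha}>(r-1)\frac{d}{2}$ together with $m_{j}\geq 0$ does place $\mathbf{s}$ in the convergence range $\Re{s_{j}}>\frac{d}{2}(j-1)$; (ii) since $\mathcal{P}_{\mathbf{m}}$ is an irreducible $G$-module containing $\Delta_{\mathbf{m}}$, its $G$-translates span it, and your character bookkeeping is right: $\Det g=\chi(g)^{n/r}$ turns the Jacobian and the factor $\chi(g)^{\alpha-n/r}$ into $\chi(g)^{\alpha}$, which is then cancelled by $\Delta(g^{*}y)^{-\alpha}=\chi(g)^{-\alpha}\Delta(y)^{-\alpha}$, while $(g^{*}y)^{-1}=g^{-1}(y^{-1})$ recovers $p(y^{-1})$; (iii) the identity-theorem continuation from $\Omega$ to the tube is legitimate. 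Two small points you should make explicit to be airtight. First, for complex $\alpha$ the symbol $\Delta(y)^{-\alpha}$ on $\Omega+iV$ must be defined by the branch continued from $\Delta>0$ on $\Omega$; this branch exists and is unique because the tube $\{y:\Re{y}\in\Omega\}$ is convex, hence simply connected, and $\Delta$ vanishes nowhere on it (invertibility of tube elements), so "holomorphic wherever $\Delta(y)\neq 0$" needs this amendment. Second, for holomorphy of the left-hand side, the domination should be made locally uniform: for $\Re{y}$ in a compact subset $C\subset\Omega$, self-duality of $\Omega$ and compactness of $C\times\{x\in\overline{\Omega}:\tr{x}=1\}$ give a $\delta>0$ with $(\Re{y}\,|\,x)\geq\delta\tr{x}$ for all $x\in\overline{\Omega}$, which yields the integrable majorant justifying Morera or differentiation under the integral sign. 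Neither point is a gap in the idea, only in the write-up.
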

For each $\mathbf{m} \in \mathscr{P}$, the spherical polynomial of weight $|\mathbf{m}|$ on $\Omega$ is defined by 
\begin{equation}
\Phi_{\mathbf{m}}^{(d)}(x):=\int_{K}\Delta_{\mathbf{m}}(kx)\,dk.
\end{equation}
We will often omit the multiplicity $d$ and simply write $\Phi_{\mathbf{m}}$. 
The algebra of all $K$-invariant polynomials on $V$, denoted by $\mathcal{P}(V)^{K}$, decomposes as 
\begin{equation}
\mathcal{P}(V)^{K}=\bigoplus_{\mathbf{m} \in \mathscr{P}}\mathbb{C}\Phi_{\mathbf{m}}. \nonumber 
\end{equation}
By analytic continuation to the complexification $V^{\mathbb{C}}$ of $V$, 
we can extend $\tr, \Delta$ and $\Phi_{\mathbf{m}}$ to polynomial functions on $V^{\mathbb{C}}$. 
\begin{rmk}
{\rm(1)}\,Since $\Phi_{\mathbf{m}}\in\mathcal{P}_{\mathbf{m}}^{K}$, for $x=k\sum_{j=1}^{r}\lambda_{j}c_{j}$, $\Phi_{\mathbf{m}}(x)$ can be expressed by
$$
\Phi_{\mathbf{m}}(\lambda_{1},\ldots,\lambda_{r}):=\Phi_{\mathbf{m}}\left(\sum_{j=1}^{r}\lambda_{j}c_{j}\right)(=\Phi_{\mathbf{m}}(x)).
$$
%Further, if $P_{\mathbf{k}}^{(\frac{2}{d})}(\lambda_{1},\ldots,\lambda_{r})$ is an $r$-variable Jack polynomial (see \cite{M}, Chapter. VI.10), then 
$\Phi_{\mathbf{m}}(x)$ also has the following expression (see \cite{F}). 
% \noindent
% {\rm(2)}\,By using an $r$-variable Jack polynomial $P_{\mathbf{k}}^{(\frac{2}{d})}(\lambda_{1},\ldots,\lambda_{r})$ (see \cite{M}, Chapter. VI.10),  
\begin{equation}
\Phi_{\mathbf{k}}^{(d)}(\lambda_{1},\ldots,\lambda_{r})=\frac{P_{\mathbf{k}}^{(\frac{2}{d})}(\lambda_{1},\ldots,\lambda_{r})}{P_{\mathbf{k}}^{(\frac{2}{d})}(1,\ldots,1)}.
\end{equation}
Here, $P_{\mathbf{k}}^{(\frac{2}{d})}(\lambda_{1},\ldots,\lambda_{r})$ is an $r$-variable Jack polynomial (see \cite{M}, Chapter. VI.10). 
In particular, since $P_{\mathbf{k}}^{(1)}(\lambda_{1},\ldots,\lambda_{r})=s_{\mathbf{m}}(\lambda_{1},\ldots,\lambda_{r})$, $\Phi_{\mathbf{m}}^{(2)}$ becomes the Schur polynomial. 
\begin{equation}
\label{eq:spherical and schur}
\Phi_{\mathbf{m}}^{(2)}(\lambda_{1},\ldots,\lambda_{r})=\frac{s_{\mathbf{m}}(\lambda_{1},\ldots,\lambda_{r})}{s_{\mathbf{m}}(1,\ldots,1)}=\frac{\delta!}{\prod_{p<q}(m_{p}-m_{q}+q-p)}s_{\mathbf{m}}(\lambda_{1},\ldots,\lambda_{r}).
\end{equation}
\noindent
{\rm(2)}\,When $r=2$, $\Phi_{\mathbf{m}}^{(d)}$ has the following hypergeometric expression\,(see \cite{Sa}). 
\begin{align}
\Phi_{m_{1},m_{2}}^{(d)}(\lambda_{1},\lambda_{2})
&=\lambda_{1}^{m_{1}}\lambda_{2}^{m_{2}}{_{2}F_1}\left(\begin{matrix}-(m_{1}-m_{2}),\frac{d}{2}\\d \end{matrix};\frac{\lambda_{1}-\lambda_{2}}{\lambda_{1}}\right) \nonumber \\
&=\lambda_{1}^{m_{1}}\lambda_{2}^{m_{2}}\frac{\left(\frac{d}{2}\right)_{m_{1}-m_{2}}}{(d)_{m_{1}-m_{2}}}{_{2}F_1}\left(\begin{matrix}-(m_{1}-m_{2}),\frac{d}{2}\\-(m_{1}-m_{2})-\frac{d}{2}+1 \end{matrix};\frac{\lambda_{2}}{\lambda_{1}}\right). \nonumber 
\end{align}
%In general, if we regard $d$ as continuous real parameter, For Jack polynomials these expression also hold
%If $d$ is continuous real parameter, this formula also holds for Jack polynomial. 
\end{rmk}
We remark that the function $\Phi_{\mathbf{m}}(e+x)$ is a $K$-invariant polynomial of degree $|\mathbf{m}|$ and define the generalized binomial coefficients $\binom{\mathbf{m}}{\mathbf{k}}_{\frac{d}{2}}$ by using the following expansion.
\begin{equation}
\Phi_{\mathbf{m}}^{(d)}(e+x)=\sum_{|\mathbf{k}|\leq |\mathbf{m}|}\binom{\mathbf{m}}{\mathbf{k}}_{\frac{d}{2}}\Phi_{\mathbf{k}}^{(d)}(x).
\end{equation}
For $\binom{\mathbf{m}}{\mathbf{k}}_{\frac{d}{2}}$, we also often omit $\frac{d}{2}$. 
The fact that if $\mathbf{k} \not\subset \mathbf{m}$, then $\binom{\mathbf{m}}{\mathbf{k}}=0$, is well known. 
Hence, we have 
\begin{equation}
\label{eq:the definition of the generalized binomial coefficients}
\Phi_{\mathbf{m}}(e+x)=\sum_{\mathbf{k} \subset \mathbf{m}}\binom{\mathbf{m}}{\mathbf{k}}\Phi_{\mathbf{k}}(x).
\end{equation}
% \begin{lem}
% \label{thm:prop of binom}
% {\rm(1)}\,If $\mathbf{k} \not\subset \mathbf{m}$, then $\binom{\mathbf{m}}{\mathbf{k}}_{d}=0$. 
% Hence, we have
% \begin{equation}
% \Phi_{\mathbf{m}}(e+x)=\sum_{\mathbf{k} \subset \mathbf{m}}\binom{\mathbf{m}}{\mathbf{k}}\Phi_{\mathbf{k}}(x).
% \end{equation}
% {\rm(2)}\,For all $\mathbf{m},\mathbf{k} \in \mathscr{P}$, $\binom{\mathbf{m}}{\mathbf{k}}_{d}\geq 0$.
% \end{lem}

% In the case $d=2$, that is $V=Herm(r,\mathbb{C}),K=U(r)$, there exists the following expression. 
% \begin{equation}
% \binom{\mathbf{m}}{\mathbf{k}}=(-1)^{|\mathbf{k}+\delta|}\frac{(-\mathbf{m}+r-1)_{\mathbf{k}+\delta}}{(\mathbf{k}+\delta)!}
% \prod_{1\leq p<q\leq r}\frac{k_{p}-k_{q}+q-p}{m_{p}-m_{q}+q-p}.
% \end{equation}
%  
% \begin{lem}
% \label{thm:explicit expression of binom}
% {\rm(1)}\,
%\end{lem}
Moreover, for the spherical polynomials, we refer two Lemmas in \cite{FK}. 
\begin{lem}[$\cite{FK}$\,Theorem\,XII.\,$1.1$\,(i)]
\label{thm:FK,Thm12.1.1}
For $z=u\sum_{j=1}^{r}\lambda_{j}c_{j}$ with $u \in U$, $\lambda_{1}\geq \cdots\geq \lambda_{r}\geq 0$ and $\mathbf{m} \in \mathscr{P}$, we have
\begin{equation}
\label{eq:FK,Thm12.1.1}
|\Phi_{\mathbf{m}}(z)|\leq \lambda_{1}^{m_{1}}\cdots \lambda_{r}^{m_{r}}\leq \lambda_{1}^{|\mathbf{m}|}=\Phi_{\mathbf{m}}(\lambda_{1}).
\end{equation}
\end{lem}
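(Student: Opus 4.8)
The three-term chain separates into one trivial equality, one elementary inequality, and one genuine estimate, and I would treat them in that order. The rightmost equality $\lambda_1^{|\mathbf{m}|}=\Phi_{\mathbf{m}}(\lambda_1)$ is only homogeneity combined with the normalization $\Phi_{\mathbf{m}}(e)=1$: here $\Phi_{\mathbf{m}}(\lambda_1)$ means evaluation at $\lambda_1 e=\sum_j\lambda_1 c_j$, and since $\Phi_{\mathbf{m}}$ is homogeneous of degree $|\mathbf{m}|$ we get $\Phi_{\mathbf{m}}(\lambda_1 e)=\lambda_1^{|\mathbf{m}|}\Phi_{\mathbf{m}}(e)=\lambda_1^{|\mathbf{m}|}$. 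The middle inequality $\lambda_1^{m_1}\cdots\lambda_r^{m_r}\le\lambda_1^{|\mathbf{m}|}$ is immediate from $0\le\lambda_j\le\lambda_1$ and $m_j\ge0$: factorwise $\lambda_j^{m_j}\le\lambda_1^{m_j}$, and multiplying over $j$ with $\sum_j m_j=|\mathbf{m}|$ gives the claim. All the content is therefore in the left inequality $|\Phi_{\mathbf{m}}(z)|\le\lambda_1^{m_1}\cdots\lambda_r^{m_r}$.

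For that inequality I would first pass from the spherical polynomial to the power function through the integral representation $\Phi_{\mathbf{m}}(x)=\int_K\Delta_{\mathbf{m}}(kx)\,dk$. Both sides are polynomials agreeing on $V$, hence on all of $V^{\mathbb{C}}$ by analytic continuation, so the formula holds at $z$. Because $K$ (the identity component of $\Aut(V)$) is contained in $U$, for every $k\in K$ we have $kz=(ku)\sum_j\lambda_j c_j$ with $ku\in U$, so $kz$ carries the same spectral data $\lambda_1\ge\cdots\ge\lambda_r\ge0$. Applying the triangle inequality under the integral then reduces the whole statement to the pointwise bound $\bigl|\Delta_{\mathbf{m}}(u\sum_j\lambda_j c_j)\bigr|\le\lambda_1^{m_1}\cdots\lambda_r^{m_r}$ valid for every $u\in U$. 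Writing $\Delta_{\mathbf{m}}=\Delta^{m_r}\prod_{j=1}^{r-1}\Delta_j^{m_j-m_{j+1}}$ with non-negative integer exponents, and recalling $\Delta_j(\sum_i\lambda_i c_i)=\lambda_1\cdots\lambda_j$, this in turn follows by raising to the powers $m_j-m_{j+1}$ and $m_r$ and multiplying, once one has the single family of estimates
\[
\Bigl|\Delta_j\Bigl(u\sum_i\lambda_i c_i\Bigr)\Bigr|\le\lambda_1\cdots\lambda_j,\qquad j=1,\ldots,r.
\]

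The case $j=r$ is in fact an equality: $\Delta=\Delta_r$ is a relative invariant for the structure group, and the compact group $U$ acts on it through a character of modulus one, so $|\Delta(u\sum_i\lambda_i c_i)|=\lambda_1\cdots\lambda_r$. The genuine difficulty, and the step I expect to be the main obstacle, is $j<r$. Here I would use the description $\Delta_j(x)=\delta_j(P_j x)$, so that $\Delta_j(z)$ is the Jordan determinant of the orthogonal compression $P_j z$ to the rank-$j$ subalgebra $V^{(j)}$; if $\mu_1\ge\cdots\ge\mu_j\ge0$ are its spectral values in $V^{(j)}$ then $|\Delta_j(z)|=\mu_1\cdots\mu_j$, and the required bound becomes the majorization $\mu_1\cdots\mu_j\le\lambda_1\cdots\lambda_j$. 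This is the Jordan-algebraic analogue of the classical fact that a $j\times j$ minor of a matrix is dominated in modulus by the product of its $j$ largest singular values (equivalently, that the $j$-th compound has operator norm $\lambda_1\cdots\lambda_j$), and I would establish it by a generalized Cauchy interlacing argument for compressions in Euclidean Jordan algebras, showing $\mu_i\le\lambda_i$ for each $i\le j$. With this spectral domination in hand, the integral estimate of the second paragraph assembles the left inequality and completes the proof.
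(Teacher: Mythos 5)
You should know at the outset that the paper contains no proof of this lemma to compare against: as its label indicates, it is imported from Faraut--Koranyi (Theorem XII.1.1 of [FK]) and stated without argument. So your proposal has to stand entirely on its own, and judged that way it has one genuine gap, located exactly where you yourself say the difficulty lies. Everything around that point is correct: the equality $\Phi_{\mathbf{m}}(\lambda_1)=\lambda_1^{|\mathbf{m}|}$ by homogeneity and $\Phi_{\mathbf{m}}(e)=1$, the middle inequality, the reduction through $\Phi_{\mathbf{m}}(z)=\int_K\Delta_{\mathbf{m}}(kz)\,dk$ together with $K\subset U$, the telescoping of $\Delta_{\mathbf{m}}=\Delta^{m_r}\prod_{j<r}\Delta_j^{m_j-m_{j+1}}$ (which does recover the exponents $m_1,\ldots,m_r$), and the modulus-one character argument that settles $j=r$ on the compact group $U$.

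The gap is the case $j<r$. You reduce it, correctly, to the bound $\bigl|\Delta_j\bigl(u\sum_i\lambda_ic_i\bigr)\bigr|\le\lambda_1\cdots\lambda_j$, rephrase this as a singular-value domination $\mu_i\le\lambda_i$ for the spectral values of the compression $P_jz$ in $V^{(j)}$, and then appeal to ``a generalized Cauchy interlacing argument for compressions in Euclidean Jordan algebras'' that is never supplied. That appeal cannot be treated as routine: the matrix proof of $\sigma_i(PAP)\le\sigma_i(A)$ rests on the min--max variational characterization of singular values, and no such characterization is available verbatim for the spectral values of an element of $V^{\mathbb{C}}$, which are defined only through the decomposition $z=u\sum_j\lambda_jc_j$. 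Proving either the interlacing $\mu_i\le\lambda_i$ or even just the product bound $\mu_1\cdots\mu_j\le\lambda_1\cdots\lambda_j$ in a general simple Euclidean Jordan algebra requires real input --- for instance the contractivity of Peirce projections in JB$^{*}$-triples combined with a compound-representation argument, or the route Faraut--Koranyi themselves take --- and is essentially of the same depth as the lemma being proved. In other words, your proposal cleanly reduces the lemma to an unproven claim equivalent to its entire content; the crux is asserted by analogy with matrices rather than established, so the proof is incomplete.
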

\begin{lem}[$\cite{FK}$\,Chapter\,XV.\,Exercise\,$3$\,(a)]
\label{thm:Cauchy kernel of spherical poly}
For any $\alpha \in \mathbb{C},z \in \overline{\mathcal{D}},w \in \mathcal{D}$, we have
\begin{equation}
\label{eq:Cauchy kernel of spherical poly}
\sum_{\mathbf{m} \in \mathscr{P}}d_{\mathbf{m}}\frac{(\alpha)_{\mathbf{m}}}{\left(\frac{n}{r}\right)_{\mathbf{m}}}\Phi_{\mathbf{m}}(z)\Phi_{\mathbf{m}}(w)
=\Delta(w)^{-\alpha}\int_{K}\Delta(kw^{-1}-z)^{-\alpha}\,dk.
\end{equation}
\end{lem}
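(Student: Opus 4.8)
The plan is to reduce the $K$-integral on the right to a single determinant power of the shape $\Delta(e-\cdot)^{-\alpha}$, expand it by the Faraut--Koranyi binomial theorem, and integrate term by term via the spherical product formula. Both sides are holomorphic in $w$ on $\mathcal{D}$ -- the left-hand side manifestly, by the convergence estimate below -- so it suffices to prove the identity on the nonempty set $\Omega\cap\mathcal{D}$ of elements $w$ of the cone with spectral norm $<1$, where the Jordan inverse $w^{-1}$ and square root $w^{1/2}$ are classical, and then extend by analytic continuation, $\Omega\cap\mathcal{D}$ being a uniqueness set in $\mathcal{D}$.

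So fix $w\in\Omega\cap\mathcal{D}$. Since each $k\in K$ is a Jordan automorphism fixing $e$, one has $\Delta(kx)=\Delta(x)$; hence $\Delta(kw^{-1}-z)=\Delta(w^{-1}-k^{-1}z)$, and replacing $k$ by $k^{-1}$ (Haar invariance) gives $\int_{K}\Delta(kw^{-1}-z)^{-\alpha}\,dk=\int_{K}\Delta(w^{-1}-kz)^{-\alpha}\,dk$. Using $P(w^{-1/2})=P(w^{1/2})^{-1}$ and $P(w^{-1/2})e=w^{-1}$, write $w^{-1}-u=P(w^{-1/2})(e-P(w^{1/2})u)$; then $\Delta(P(a)v)=\Delta(a)^{2}\Delta(v)$ together with $\Delta(w^{-1/2})^{2}=\Delta(w)^{-1}$ yields
\[
\Delta(w^{-1}-u)=\Delta(w)^{-1}\Delta(e-P(w^{1/2})u).
\]
Taking $u=kz$, raising to the power $-\alpha$, and multiplying by $\Delta(w)^{-\alpha}$, the two powers of $\Delta(w)$ cancel, so the right-hand side of the lemma equals $\int_{K}\Delta(e-P(w^{1/2})kz)^{-\alpha}\,dk$.

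I would then apply the Faraut--Koranyi binomial theorem $\Delta(e-\zeta)^{-\alpha}=\sum_{\mathbf{m}\in\mathscr{P}}\frac{(\alpha)_{\mathbf{m}}d_{\mathbf{m}}}{(n/r)_{\mathbf{m}}}\Phi_{\mathbf{m}}(\zeta)$ (\cite{FK}) with $\zeta=P(w^{1/2})kz$ and integrate over $K$ term by term:
\[
\int_{K}\Delta(e-P(w^{1/2})kz)^{-\alpha}\,dk=\sum_{\mathbf{m}\in\mathscr{P}}\frac{(\alpha)_{\mathbf{m}}d_{\mathbf{m}}}{(n/r)_{\mathbf{m}}}\int_{K}\Phi_{\mathbf{m}}(P(w^{1/2})kz)\,dk.
\]
The spherical product formula $\int_{K}\Phi_{\mathbf{m}}(P(x^{1/2})ky)\,dk=\Phi_{\mathbf{m}}(x)\Phi_{\mathbf{m}}(y)$ (\cite{FK}), which extends to $y=z\in V^{\mathbb{C}}$ by holomorphy since $\Phi_{\mathbf{m}}$ is polynomial, turns each inner integral into $\Phi_{\mathbf{m}}(w)\Phi_{\mathbf{m}}(z)$, producing exactly the left-hand side.

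The main obstacle is justifying the term-by-term integration, and this is where the preparatory estimates enter. Because $k\in K\subset U$ preserves the spectral norm and $P(w^{1/2})$ scales it by at most $|w|$, we have $|P(w^{1/2})kz|\le|w|\,|z|\le|w|<1$ uniformly in $k$ and in $z\in\overline{\mathcal{D}}$. Combining $|\Phi_{\mathbf{m}}(\zeta)|\le|\zeta|^{|\mathbf{m}|}\le|w|^{|\mathbf{m}|}$ from Lemma~\ref{thm:FK,Thm12.1.1} with $|(\alpha)_{\mathbf{m}}|\le\prod_{j=1}^{r}(|\alpha|+d(r-1))_{m_{j}}$ from Corollary~\ref{thm:ineq for generalized shifted factorial 2}, the binomial series is dominated on $K$ by the convergent series $\sum_{\mathbf{m}}\frac{\prod_{j}(|\alpha|+d(r-1))_{m_{j}}\,d_{\mathbf{m}}}{(n/r)_{\mathbf{m}}}|w|^{|\mathbf{m}|}$, so the convergence is uniform in $k$ and Fubini applies; the same bound shows the left-hand side converges absolutely for $w\in\mathcal{D}$, $z\in\overline{\mathcal{D}}$, securing the holomorphy in $w$ invoked at the outset.
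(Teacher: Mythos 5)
The paper never proves this lemma: it is stated as a known fact, imported from Faraut--Koranyi \cite{FK}, so there is no in-paper argument to compare yours against. Your proof is correct, and it is in essence the standard \cite{FK} derivation of this Cauchy--Szeg\H{o}-type kernel: the reduction $\Delta(w)^{-\alpha}\Delta(kw^{-1}-z)^{-\alpha}=\Delta\bigl(e-P(w^{1/2})kz\bigr)^{-\alpha}$ via $\Delta(P(a)v)=\Delta(a)^{2}\Delta(v)$ and the $K$-invariance of $\Delta$, followed by the binomial expansion of $\Delta(e-\zeta)^{-\alpha}$ and termwise integration using the product formula $\int_{K}\Phi_{\mathbf{m}}(P(x^{1/2})ky)\,dk=\Phi_{\mathbf{m}}(x)\Phi_{\mathbf{m}}(y)$, with your uniform bound $|P(w^{1/2})kz|\le|w|\,|z|<1$ legitimizing Fubini exactly where it is needed. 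Two small points deserve flagging, neither fatal. First, the spectral-norm inequality $|P(a)u|\le|a|^{2}|u|$ is asserted rather than proved; it is a standard property (the JB$^{*}$-norm axiom, immediate in the matrix cones, and verifiable for $a=w^{1/2}\in\Omega$ in general), but since it carries the whole convergence argument a citation or one-line justification would be appropriate. Second, your analytic-continuation step from $\Omega\cap\mathcal{D}$ to all of $\mathcal{D}$ tacitly requires a holomorphic interpretation of the right-hand side at non-invertible $w$ and a choice of branch for complex $\alpha$; strictly speaking the continuation shows that the left-hand side is the holomorphic extension of the right-hand side from $\Omega\cap\mathcal{D}$. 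This imprecision is inherited from the statement of the lemma itself (the paper writes $w^{-1}$ for arbitrary $w\in\mathcal{D}$), and in all of the paper's applications the lemma is invoked for $w\in\Omega\cap\mathcal{D}$, where your direct argument already gives the identity.
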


The spherical function, $\varphi_{\mathbf{s}}$, on $\Omega$ for $\mathbf{s} \in \mathbb{C}^{r}$ is defined by 
\begin{equation}
\varphi_{\mathbf{s}}(x):=\int_{K}\Delta_{\mathbf{s}+\rho}(kx)\,dk.  
\end{equation}
We remark that for $x \in \Omega$
\begin{equation}
\varphi_{\mathbf{s}}(x^{-1})=\varphi_{-\mathbf{s}}(x)
\end{equation}
and  for $x \in \Omega, \mathbf{m} \in \mathscr{P}$
\begin{equation}
\Phi_{\mathbf{m}}(x)=\varphi_{\mathbf{m}-\rho}(x).  
\end{equation}

Let $\mathbb{D}(\Omega)$ be the algebra of $G$-invariant differential operators on $\Omega$, 
$\mathcal{P}(V)^{K}$ be the space of $K$-invariant polynomials on $V$, 
and $\mathcal{P}(V\times V)^{G}$ be the space of polynomials on $V\times V$, which are invariant in the sense that
$$
p(gx,\xi)=p(x,g^{*}\xi),\,\,\,\,(g \in G).
$$
Here, we write $g^{*}$ for the adjoint of an element $g$ (i.e., $(gx|y)=(x|g^{*}y)$ for all $x,y \in V$). 
The spherical function $\varphi_{\mathbf{s}}$ is an eigenfunction of every $D \in \mathbb{D}(\Omega)$. 
Thus, we denote its eigenvalues by $\gamma(D)(\mathbf{s})$, that is, $D\varphi_{\mathbf{s}}=\gamma(D)(\mathbf{s})\varphi_{\mathbf{s}}$. 

The symbol $\sigma_{D}$ of a partial differential operator $D$ which acts on the variable $x \in V$ is defined by 
\begin{equation}
De^{(x|\xi)}=\sigma_{D}(x,\xi)e^{(x|\xi)}\,\,\,\,(x,\xi \in V). \nonumber
\end{equation}
A differential operator $D$ on $\Omega$ is invariant under $G$ if and only if its symbol $\sigma_{D}$ belongs to $\mathcal{P}(V\times V)^{G}$. 
In addition, the map $D \mapsto \sigma_{D}$ establishes a linear isomorphism %that is not an algebra isomorphism 
from $\mathbb{D}(\Omega)$ onto $\mathcal{P}(V\times V)^{G}$. 
Moreover, the map $D \mapsto \sigma_{D}(e,u)$ is a vector space isomorphism from $\mathbb{D}(\Omega)$ onto $\mathcal{P}(V)^{K}$. 
% Hence, for all $\mathbf{k} \in \mathscr{P}$, there unique exists $D^{\mathbf{k}} \in \mathbb{D}(\Omega)$ such that
% \begin{equation}
% \sigma_{D^{\mathbf{k}}}(x,e)=\Phi_{\mathbf{k}}(x) \in \mathcal{P}(V)^{K},\,\,\,\text{i.e.}\,\,
% D^{\mathbf{k}}e^{\tr{x}}=\Phi_{\mathbf{k}}(x)e^{\tr{x}}. 
% \end{equation}
%  Since the spherical function $\varphi_{\mathbf{s}}$ is an eigenfunction of every $D \in \mathbb{D}(\Omega)$,  
%  we denote its eigenvalues by $\gamma_{D}(\mathbf{s})$, that is $D\varphi_{\mathbf{s}}=\gamma_{D}(\mathbf{s})\varphi_{\mathbf{s}}$. 
% Moreover, for $\mathbf{s} \in \mathbb{C}^{r}$, we define $\gamma_{\mathbf{k}}(\mathbf{s})$
% \begin{equation}
% \gamma_{\mathbf{k}}(\mathbf{s}):=\gamma(D^{\mathbf{k}})(\mathbf{s})=D^{\mathbf{k}}\varphi_{\mathbf{s}}(x)|_{x=e}.
% \end{equation}
% In the $r=1$ case, $D^{k}=x^{k}\partial_{x}^{k}$, $\gamma_{k}(s)=s(s-1)\cdots(s-k+1)$. 
% We also remark for $\alpha \in \mathbb{C}$, $\gamma_{\mathbf{k}}(\alpha -\rho)=(-1)^{|\mathbf{k}|}(-\alpha)_{\mathbf{k}}$. 
In particular, for $\mathbf{k} \in \mathscr{P}, \mathbf{s} \in \mathbb{C}^{r}$, we put
\begin{equation}
\gamma_{\mathbf{k}}(\mathbf{s}):=\gamma(\Phi_{\mathbf{k}}(\partial_{x}))(\mathbf{s})=\Phi_{\mathbf{k}}(\partial_{x})\varphi_{\mathbf{s}}(x)|_{x=e}.
\end{equation}
Here, $\Phi_{\mathbf{k}}(\partial_{x})$ is the unique $G$-invariant differential operator satisfying 
$$
\sigma_{\Phi_{\mathbf{k}}(\partial_{x})}(e,\xi)=\Phi_{\mathbf{k}}(\xi) \in \mathcal{P}(V)^{K},
\,\,\,\,\text{i.e.,}\,\,\Phi_{\mathbf{k}}(\partial_{x})e^{(x|\xi)}|_{x=e}=\Phi_{\mathbf{k}}(\xi)e^{\tr{\xi}}.
$$ 
We remark that $\Phi_{k}(\partial_{x})=\partial_{x}^{k}$ and $\gamma_{k}(s)=s(s-1)\cdots(s-k+1)$ in the $r=1$ case, and for any $\alpha \in \mathbb{C}$, $\mathbf{k} \in \mathscr{P}$, we have
\begin{equation}
\label{eq:gamma special case}
\gamma_{\mathbf{k}}(\alpha -\rho)=(-1)^{|\mathbf{k}|}(-\alpha)_{\mathbf{k}}.
\end{equation}   
The function $\gamma_{D}$ is an $r$ variable symmetric polynomial   
and map $D \mapsto \gamma_{D}$ is an algebra isomorphism from $\mathbb{D}(\Omega)$ onto the algebra $\mathcal{P}(\mathbb{R}^{r})^{\mathfrak{S}_{r}}$, 
which is a special case of the Harish-Chandra isomorphism. 
If a $K$-invariant function $\psi$ is analytic in the neighborhood of $e$, it admits a spherical Taylor expansion near $e$:
\begin{equation}
\psi(e+x)=\sum_{\mathbf{k} \in \mathscr{P}}d_{\mathbf{k}}\frac{1}{\left(\frac{n}{r}\right)_{\mathbf{k}}}\{\Phi_{\mathbf{k}}(\partial_{x})\psi(x)|_{x=e}\}\Phi_{\mathbf{k}}(x). \nonumber
\end{equation}
By the definition of $\gamma_{\mathbf{k}}$, we have
\begin{equation}
\varphi_{\mathbf{s}}(e+x)=\sum_{\mathbf{k} \in \mathscr{P}}d_{\mathbf{k}}\frac{1}{\left(\frac{n}{r}\right)_{\mathbf{k}}}\gamma_{\mathbf{k}}(\mathbf{s})\Phi_{\mathbf{k}}(x). \nonumber
\end{equation}
Since $\Phi_{\mathbf{m}}=\varphi_{\mathbf{m}-\rho}$, 
$$
\binom{\mathbf{m}}{\mathbf{k}}=d_{\mathbf{k}}\frac{1}{\left(\frac{n}{r}\right)_{\mathbf{k}}}\gamma_{\mathbf{k}}(\mathbf{m}-\rho).
$$

For a complex number $\alpha$, we define the following differential operator on $\Omega$:
\begin{equation}
D_{\alpha}=\Delta(x)^{1+\alpha}\Delta(\partial_{x})\Delta(x)^{-\alpha}. \nonumber
\end{equation}
For this operator, we have 
\begin{equation}
\gamma(D_{\alpha})(\mathbf{s})=\prod_{j=1}^{r}\left(s_{j}-\alpha+\frac{d}{4}(r-1)\right).
\end{equation}
The operators $D_{j\frac{d}{2}},\,j=0,\ldots,r-1$ generate the algebra $\mathbb{D}(\Omega)$. 
\begin{lem}
\label{thm:estimate of shifted Jack1}
For all $\mathbf{k} \in \mathscr{P}$, there exist some constant $C>0$ and integer $N$ such that for any $\mathbf{s} \in \mathbb{C}^{r}$
% \begin{align}
% \label{eq:estimate of shifted Jack1}
% |\gamma_{\mathbf{k}}(\mathbf{s})|&\leq C\prod_{l=1}^{r}\left(|s_{l}|+\frac{d}{4}(r-1)\right)^{N}.  \\
% \label{eq:cor of estimate of shifted Jack1}
% |\gamma_{\mathbf{k}}(\mathbf{s}-\rho)|&\leq C\prod_{l=1}^{r}\left(|s_{l}|+\frac{d}{2}(r-1)\right)^{N}.
% \end{align}
\begin{align}
\label{eq:estimate of shifted Jack1}
|\gamma_{\mathbf{k}}(\mathbf{s})|&\leq C\prod_{l=1}^{r}\left(|s_{l}|+\frac{d}{4}(r-1)\right)^{N}. 
\end{align}
\end{lem}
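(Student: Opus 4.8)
The plan is to deduce the estimate from the single structural fact that $\gamma_{\mathbf{k}}$ is a polynomial of degree at most $|\mathbf{k}|$ in $\mathbf{s}$, after which only an elementary comparison of monomials with the product $\prod_{l}(|s_{l}|+\frac{d}{4}(r-1))^{N}$ remains. First I would identify $\gamma_{\mathbf{k}}(\mathbf{s})=\gamma(\Phi_{\mathbf{k}}(\partial_{x}))(\mathbf{s})$ as the Harish-Chandra image of the $G$-invariant operator $\Phi_{\mathbf{k}}(\partial_{x})\in\mathbb{D}(\Omega)$. Since $\Phi_{\mathbf{k}}$ is homogeneous of degree $|\mathbf{k}|$, the operator $\Phi_{\mathbf{k}}(\partial_{x})$ has order $|\mathbf{k}|$; under the algebra isomorphism $D\mapsto\gamma_{D}$ from $\mathbb{D}(\Omega)$ onto $\mathcal{P}(\mathbb{R}^{r})^{\mathfrak{S}_{r}}$ recalled above, which carries an operator of order $m$ to a symmetric polynomial of degree at most $m$, it follows that $\gamma_{\mathbf{k}}$ is a polynomial in $s_{1},\ldots,s_{r}$ of total degree at most $|\mathbf{k}|$. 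By (\ref{eq:gamma special case}) the diagonal restriction $\gamma_{\mathbf{k}}(\alpha-\rho)=(-1)^{|\mathbf{k}|}(-\alpha)_{\mathbf{k}}$ already has degree exactly $|\mathbf{k}|$ in $\alpha$, so the degree is in fact $|\mathbf{k}|$, but only the upper bound is needed; alternatively one may invoke the explicit shifted-Jack expression for $\gamma_{\mathbf{k}}$.

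Next I would expand $\gamma_{\mathbf{k}}(\mathbf{s})=\sum_{|\boldsymbol{\nu}|\leq|\mathbf{k}|}c_{\boldsymbol{\nu}}\,s_{1}^{\nu_{1}}\cdots s_{r}^{\nu_{r}}$ and set $N:=|\mathbf{k}|$ and $a:=\frac{d}{4}(r-1)$, with $a>0$ for the cones in question since $d\geq 1$ whenever $r\geq 2$. The elementary inequality $1+t\leq\max(1,a^{-1})(a+t)$, valid for all $t\geq 0$, gives for each $l$ the bound $|s_{l}|^{\nu_{l}}\leq(1+|s_{l}|)^{N}\leq\max(1,a^{-1})^{N}(a+|s_{l}|)^{N}$, using $\nu_{l}\leq N$ together with $1+|s_{l}|\geq 1$. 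Multiplying over $l$ and summing the finitely many monomials, the coefficients $|c_{\boldsymbol{\nu}}|$ and the powers of $\max(1,a^{-1})$ collect into a single constant $C$, yielding $|\gamma_{\mathbf{k}}(\mathbf{s})|\leq C\prod_{l=1}^{r}(|s_{l}|+\frac{d}{4}(r-1))^{N}$ as claimed.

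The step I expect to be the main obstacle is controlling the estimate near the coordinate hyperplanes, i.e.\ where some $|s_{l}|$ is small: the offset $a=\frac{d}{4}(r-1)$ must be strictly positive, for otherwise $\prod_{l}(|s_{l}|+a)^{N}$ vanishes on the axes and no uniform bound can hold once $|\mathbf{k}|\geq 2$ (already visible in the degenerate case $r=1$, where $\gamma_{k}(s)=s(s-1)\cdots(s-k+1)$). The positivity $d\geq 1$ for $r\geq 2$ is exactly what licenses the finite factor $\max(1,a^{-1})$ and makes the term-by-term comparison valid on all of $\mathbb{C}^{r}$ rather than merely away from the axes.
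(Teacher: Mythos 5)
Your proof is correct, but it follows a genuinely different route from the paper's. The paper never expands $\gamma_{\mathbf{k}}$ into monomials; instead it uses the structural fact recorded just before the lemma, namely that the algebra $\mathbb{D}(\Omega)$ is generated by the operators $D_{j\frac{d}{2}}$, $j=0,\ldots,r-1$, writes $\Phi_{\mathbf{k}}(\partial_{x})=\sum a_{l_{0},\ldots,l_{r-1}}D_{0\frac{d}{2}}^{l_{0}}\cdots D_{(r-1)\frac{d}{2}}^{l_{r-1}}$, and then exploits that $D\mapsto\gamma_{D}$ is an algebra homomorphism together with the explicit eigenvalues
\[
\gamma(D_{j\frac{d}{2}})(\mathbf{s})=\prod_{l=1}^{r}\Bigl(s_{l}-\tfrac{d}{2}j+\tfrac{d}{4}(r-1)\Bigr),
\]
whose linear factors are bounded by $|s_{l}|+\frac{d}{4}(r-1)$ with no constant at all, precisely because the shifts $\frac{d}{4}(r-1)-\frac{d}{2}j$ lie in $[-\frac{d}{4}(r-1),\frac{d}{4}(r-1)]$ for $0\leq j\leq r-1$; that is where the specific offset $\frac{d}{4}(r-1)$ in the statement comes from. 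Your argument replaces this input by the bare polynomiality of $\gamma_{\mathbf{k}}$ (which the paper does assert as part of the Harish--Chandra isomorphism onto $\mathcal{P}(\mathbb{R}^{r})^{\mathfrak{S}_{r}}$) plus an elementary term-by-term comparison, at the price of the constants $\max(1,a^{-1})^{rN}$ and of invoking the order--degree correspondence to get $N=|\mathbf{k}|$ (standard, though not stated in the paper; as you note, any finite degree suffices since $N$ is allowed to depend on $\mathbf{k}$). Mathematically the trade-off is a wash, and your version is arguably more careful on one point: both proofs must merge terms of different total degree into the single power $N$, and that step silently requires $\frac{d}{4}(r-1)>0$. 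The paper's proof glosses over this entirely, whereas you isolate it and correctly observe that for $r=1$ (offset zero) the statement as printed actually fails once $|\mathbf{k}|\geq 2$ --- a defect of the lemma itself, harmless in the paper's applications because there $\mathbf{s}=\mathbf{x}-\rho$ with $\mathbf{x}\in\mathscr{P}$, but one that your write-up makes visible and the paper's does not.
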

\begin{proof}
Since the algebra $\mathbb{D}(\Omega)$ is generated by $D_{j\frac{d}{2}},\,j=0,\ldots,r-1$, 
for $\Phi_{\mathbf{k}}(\partial_{x}) \in \mathbb{D}(\Omega)$,  
% $$
% \Phi_{\mathbf{k}}(\partial_{x})=\sum_{j=1}^{r}\sum_{k_{j};\text{finite}}a_{k_{j}}D_{j\frac{d}{2}}^{k_{j}}.
% $$ 
$$
\Phi_{\mathbf{k}}(\partial_{x})=\sum_{l_{0},\ldots,l_{r-1};{\text{finite}}}
a_{l_{0},\ldots,l_{r-1}}D_{0\frac{d}{2}}^{l_{0}} \cdots D_{(r-1)\frac{d}{2}}^{l_{r-1}}.
$$
Here, we remark that for $j=0,\ldots,r-1$
\begin{align}
|\gamma(D_{\frac{d}{2}(j-1)})(\mathbf{s})|
=\left|\prod_{l=1}^{r}\left(s_{l}+\frac{d}{4}(r-1)-\frac{d}{2}(j-1)\right)\right| %\nonumber \\
\leq \prod_{l=1}^{r}\left(|s_{l}|+\frac{d}{4}(r-1)\right). \nonumber
% &\leq \prod_{l=1}^{r}\left(|s_{l}|+\frac{d}{4}(r-1)+2\left(\frac{n}{r}-1\right)-\frac{d}{2}(j-1)\right) \nonumber \\
% &=\gamma(D_{\frac{d}{2}(j-1)})(\|\mathbf{s}\|+d(r-1)). \nonumber
\end{align}
Therefore, 
% \begin{align}
% |\gamma_{\mathbf{k}}(\mathbf{s})|
% &\leq \sum_{j=1}^{r}\sum_{k_{j};\text{finite}}|a_{k_{j}}||\gamma(D_{j\frac{d}{2}})(\mathbf{s})|^{k_{j}} \nonumber \\
% &\leq C\gamma(D_{j\frac{d}{2}})(\|\mathbf{s}\|+d(r-1))^{N} \nonumber \\
% &\leq 
% \end{align}
\begin{align}
|\gamma_{\mathbf{k}}(\mathbf{s})|
\leq \sum_{l_{0},\ldots,l_{r-1};{\text{finite}}}
|a_{l_{0},\ldots,l_{r-1}}|\gamma(D_{0\frac{d}{2}})(\mathbf{s})^{l_{0}} \cdots \gamma(D_{(r-1)\frac{d}{2}})(\mathbf{s})^{l_{r-1}}
\leq C\prod_{l=1}^{r}\left(|s_{l}|+\frac{d}{4}(r-1)\right)^{N}. \nonumber
\end{align}
% \begin{align}
% |\gamma_{\mathbf{k}}(\mathbf{s})|
% \leq \sum_{j=1}^{r}\sum_{k_{j};\text{finite}}|a_{k_{j}}||\gamma(D_{j\frac{d}{2}})(\mathbf{s})|^{k_{j}}
% \leq C\prod_{l=1}^{r}\left(|s_{l}|+\frac{d}{4}(r-1)\right)^{N}. \nonumber
% \end{align}
% We immediately derive (\ref{eq:cor of estimate of shifted Jack1}) from (\ref{eq:estimate of shifted Jack1}).
\end{proof}
\begin{lem}
\label{thm:positivity of shifted Jack}
For all $\mathbf{m},\mathbf{k} \in \mathscr{P}$, we have
\begin{equation}
\gamma_{\mathbf{k}}(\mathbf{m}-\rho)\geq 0.
\end{equation}
\end{lem}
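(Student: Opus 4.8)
The plan is to read off the sign of $\gamma_{\mathbf{k}}(\mathbf{m}-\rho)$ from the identity
$$
\binom{\mathbf{m}}{\mathbf{k}}=d_{\mathbf{k}}\frac{1}{\left(\frac{n}{r}\right)_{\mathbf{k}}}\gamma_{\mathbf{k}}(\mathbf{m}-\rho)
$$
obtained just above, i.e. to rewrite it as $\gamma_{\mathbf{k}}(\mathbf{m}-\rho)=\frac{1}{d_{\mathbf{k}}}\left(\frac{n}{r}\right)_{\mathbf{k}}\binom{\mathbf{m}}{\mathbf{k}}$ and to check that each factor on the right is nonnegative. First I would dispose of the two prefactors. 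The dimension $d_{\mathbf{k}}$ is a positive integer. For $\left(\frac{n}{r}\right)_{\mathbf{k}}$ I would use $n=r+\frac{d}{2}r(r-1)$, so that $\frac{n}{r}=1+\frac{d}{2}(r-1)$, together with the product formula $(\mathbf{s})_{\mathbf{m}}=\prod_{j=1}^{r}\left(s_{j}-\frac{d}{2}(j-1)\right)_{m_{j}}$; this gives $\left(\frac{n}{r}\right)_{\mathbf{k}}=\prod_{j=1}^{r}\left(1+\frac{d}{2}(r-j)\right)_{k_{j}}$, each factor being a shifted factorial of a base $\geq 1$ and hence strictly positive. Thus the whole problem reduces to the nonnegativity of the generalized binomial coefficient $\binom{\mathbf{m}}{\mathbf{k}}$.

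The coefficient vanishes unless $\mathbf{k}\subset\mathbf{m}$ (noted in the text), so it remains to treat $\mathbf{k}\subset\mathbf{m}$. The mechanism I would exploit is that $K$ fixes $e$, so the defining expansion, after replacing $x$ by $tx$ and using homogeneity, reads for $x\in\overline{\Omega}$ and $t>0$
$$
\Phi_{\mathbf{m}}(e+tx)=\int_{K}\Delta_{\mathbf{m}}(e+t\,kx)\,dk=\sum_{p=0}^{|\mathbf{m}|}t^{p}\sum_{\substack{|\mathbf{k}|=p\\ \mathbf{k}\subset\mathbf{m}}}\binom{\mathbf{m}}{\mathbf{k}}\Phi_{\mathbf{k}}(x).
$$
For fixed $kx\in\overline{\Omega}$ the integrand factorizes as $\Delta_{\mathbf{m}}(e+tkx)=\Delta(e+tkx)^{m_{r}}\prod_{j=1}^{r-1}\Delta_{j}(e+tkx)^{m_{j}-m_{j+1}}$, and each $\Delta_{j}(e+ty)$ with $y\in\overline{\Omega}$ is, after projecting to $V^{(j)}$, a product $\prod_{i}(1+t\mu_{i})$ over nonnegative spectral data $\mu_{i}\geq 0$, hence a polynomial in $t$ with nonnegative coefficients. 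Since the exponents $m_{r}\geq 0$ and $m_{j}-m_{j+1}\geq 0$ are nonnegative ($\mathbf{m}$ being a partition) and the Haar integral over the compact $K$ is a positive average, $\Phi_{\mathbf{m}}(e+tx)$ is a polynomial in $t$ with nonnegative coefficients. Reading off the coefficient of $t^{p}$ shows that the homogeneous piece $\sum_{|\mathbf{k}|=p,\,\mathbf{k}\subset\mathbf{m}}\binom{\mathbf{m}}{\mathbf{k}}\Phi_{\mathbf{k}}(x)$ is a nonnegative function on $\overline{\Omega}$.

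When $r=1$ there is a single $\mathbf{k}$ in each degree, so this already yields $\binom{\mathbf{m}}{\mathbf{k}}\geq 0$. I expect the main obstacle to be the passage from ``the degree-$p$ part is nonnegative on the cone'' to ``each individual $\binom{\mathbf{m}}{\mathbf{k}}$ is nonnegative'' when $r\geq 2$, since several partitions $\mathbf{k}$ share a common degree and the spherical polynomials $\Phi_{\mathbf{k}}$ are merely triangular in dominance order (via Lemma~\ref{thm:FK,Thm12.1.1}) with respect to their leading monomials $\lambda_{1}^{k_{1}}\cdots\lambda_{r}^{k_{r}}$, so a naive coefficient extraction involves subtractions. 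To finish I would invoke the positivity of the generalized binomial coefficients of Jack polynomials for parameter $\frac{d}{2}>0$ (the binomial formula of Lassalle and Okounkov--Olshanski, equivalently the shifted-Jack expansion $\binom{\mathbf{m}}{\mathbf{k}}=P^{*}_{\mathbf{k}}(\mathbf{m})/P^{*}_{\mathbf{k}}(\mathbf{k})$ with manifestly nonnegative entries), which resolves exactly this triangular separation and delivers $\binom{\mathbf{m}}{\mathbf{k}}\geq 0$, and hence $\gamma_{\mathbf{k}}(\mathbf{m}-\rho)\geq 0$.
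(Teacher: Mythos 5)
Your proposal is correct and follows essentially the same route as the paper: reduce the claim via $\gamma_{\mathbf{k}}(\mathbf{m}-\rho)=\frac{1}{d_{\mathbf{k}}}\left(\frac{n}{r}\right)_{\mathbf{k}}\binom{\mathbf{m}}{\mathbf{k}}$ with $d_{\mathbf{k}},\left(\frac{n}{r}\right)_{\mathbf{k}}>0$, then settle $\binom{\mathbf{m}}{\mathbf{k}}\geq 0$ by the Okounkov--Olshanski expression of the binomial coefficient as a normalized shifted Jack polynomial (your $P^{*}_{\mathbf{k}}(\mathbf{k})$ equals the paper's hook-length factor $H_{(\frac{d}{2})}(\mathbf{k})$), whose positivity at partitions $\mathbf{m}\supset\mathbf{k}$ the paper likewise derives from the same source via skew-diagram dimensions. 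Your intermediate Haar-integral argument, which you yourself note cannot separate the coefficients for $r\geq 2$, is a detour that ends up being discarded in favor of exactly the citation the paper uses.
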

\begin{proof}
Since $\gamma_{\mathbf{k}}(\mathbf{m}-\rho)=\frac{1}{d_{\mathbf{k}}}\left(\frac{n}{r}\right)_{\mathbf{k}}\binom{\mathbf{m}}{\mathbf{k}}$ 
and $d_{\mathbf{k}}, \left(\frac{n}{r}\right)_{\mathbf{k}}>0$, it suffices to show $\binom{\mathbf{m}}{\mathbf{k}}\geq 0$ for all $\mathbf{m},\mathbf{k} \in \mathscr{P}$. 
From \cite{OO}, the generalized binomial coefficients are written as 
$$
\binom{\mathbf{m}}{\mathbf{k}}_{\frac{d}{2}}=\frac{P_{\mathbf{k}}^{\ast}\left(\mathbf{m};\frac{d}{2}\right)}{H_{\left(\frac{d}{2}\right)}(\mathbf{k})},
$$
where $P_{\mathbf{k}}^{\ast}\left(\mathbf{m};\frac{d}{2}\right)$ is the shifted Jack polynomial (see also \cite{Sahi}, \cite{KS} and \cite{OO})
and $H_{(\frac{d}{2})}(\mathbf{k})>0$ is a deformation of the hook length. 
Moreover, by using (5.2) in \cite{OO}
$$
P_{\mathbf{k}}^{\ast}\left(\mathbf{m};\frac{d}{2}\right)
=\frac{\frac{d}{2}\text{-}\dim{\mathbf{m}/\mathbf{k}}}{\frac{d}{2}\text{-}\dim{\mathbf{m}}}|\mathbf{m}|(|\mathbf{m}|-1)\cdots(|\mathbf{m}|-|\mathbf{k}|+1).
$$
Further, the positivity of the generalized dimensions of the skew Young diagram, $\frac{d}{2}\text{-}\dim{\mathbf{m}/\mathbf{k}}$, follows from (5.1) of \cite{OO} and Chapter V\hspace{-.1em}I.\,6  of \cite{M}. 
Therefore, we obtain the positivity of the shifted Jack polynomial and the conclusion. 
\end{proof}
\begin{thm}
\label{thm:spherical Taylor expan lem}
{\rm{(1)}}\,
For $w \in \mathcal{D}, \mathbf{k} \in \mathscr{P}, \alpha \in \mathbb{C}$, we have
\begin{equation}
\label{eq:basic expansion 1}
(\alpha)_{\mathbf{k}}\Delta(e-w)^{-\alpha}\Phi_{\mathbf{k}}(w(e-w)^{-1})
=\sum_{\mathbf{x} \in \mathscr{P}}d_{\mathbf{x}}\frac{(\alpha)_{\mathbf{x}}}{\left(\frac{n}{r}\right)_{\mathbf{x}}}\gamma_{\mathbf{k}}(\mathbf{x}-\rho)\Phi_{\mathbf{x}}(w). 
\end{equation}
Here, we choose the branch of $\Delta (e-w)^{-\alpha}$ which takes the value $1$ at $w=0$.

\noindent
{\rm(2)}\,For $w \in V^{\mathbb{C}}, \mathbf{k} \in \mathscr{P}$, the $K$-invariant analytic function $e^{\tr{w}}\Phi_{\mathbf{k}}(w)$ has the following expansion 
\begin{equation}
\label{eq:basic expansion 2}
e^{\tr{w}}\Phi_{\mathbf{k}}(w)=\sum_{\mathbf{x} \in \mathscr{P}}d_{\mathbf{x}}\frac{1}{\left(\frac{n}{r}\right)_{\mathbf{x}}}\gamma_{\mathbf{k}}(\mathbf{x}-\rho)\Phi_{\mathbf{x}}(w).
\end{equation}
\end{thm}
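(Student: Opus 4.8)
The plan is to prove both identities by one mechanism: apply the invariant operator $\Phi_{\mathbf{k}}(\partial_z)$, evaluated at $z=e$, to a two–variable reproducing kernel $\sum_{\mathbf{x}}c_{\mathbf{x}}\Phi_{\mathbf{x}}(z)\Phi_{\mathbf{x}}(w)$. The key is that, applied term by term, this operator produces the coefficient $\gamma_{\mathbf{k}}(\mathbf{x}-\rho)$ in front of each $\Phi_{\mathbf{x}}(w)$, because $\Phi_{\mathbf{k}}(\partial_z)\Phi_{\mathbf{x}}(z)|_{z=e}=\gamma_{\mathbf{k}}(\mathbf{x}-\rho)$ is immediate from the definition $\gamma_{\mathbf{k}}(\mathbf{s})=\Phi_{\mathbf{k}}(\partial_x)\varphi_{\mathbf{s}}(x)|_{x=e}$ together with $\Phi_{\mathbf{x}}=\varphi_{\mathbf{x}-\rho}$. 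Hence the series side of the operation gives exactly the right-hand side of each claim, and everything reduces to identifying the closed form that the same operation produces from the closed form of the kernel.

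For (1) I would take the kernel to be the one in Lemma~\ref{thm:Cauchy kernel of spherical poly}, whose coefficients are $c_{\mathbf{x}}=d_{\mathbf{x}}(\alpha)_{\mathbf{x}}/(\tfrac nr)_{\mathbf{x}}$, so the series side already matches. The substantive step is the power-function differentiation formula
\[
\Phi_{\mathbf{k}}(\partial_z)\Delta(b-z)^{-\alpha}\big|_{z=e}=(\alpha)_{\mathbf{k}}\Delta(b-e)^{-\alpha}\Phi_{\mathbf{k}}((b-e)^{-1}).
\]
To obtain it I would write $\Delta(y)^{-\alpha}=\Gamma_\Omega(\alpha)^{-1}\int_\Omega e^{-(y|x)}\Delta(x)^{\alpha-\frac nr}\,dx$ (the $p=1$ case of Lemma~\ref{thm:int formula}), set $y=b-z$, differentiate under the integral using the symbol rule $\Phi_{\mathbf{k}}(\partial_z)e^{(z|x)}|_{z=e}=\Phi_{\mathbf{k}}(x)e^{\tr x}$ so that the exponential becomes $e^{-(b-e|x)}$, and then apply Lemma~\ref{thm:int formula} once more with $p=\Phi_{\mathbf{k}}$ and $y=b-e$; the ratio $\Gamma_\Omega(\mathbf{k}+\alpha)/\Gamma_\Omega(\alpha)=(\alpha)_{\mathbf{k}}$ yields the stated constant. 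Taking $b=kw^{-1}$ and integrating over $K$, the facts that $K$ fixes $e$ and preserves $\Delta$, $\Phi_{\mathbf{k}}$ and inversion collapse the $K$-integral; multiplicativity of $\Delta$ together with $(w^{-1}-e)^{-1}=w(e-w)^{-1}$ then reassembles precisely the left-hand side of (1).

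The convergence bookkeeping is routine but must be carried out. Since $|\Phi_{\mathbf{x}}(z)\Phi_{\mathbf{x}}(w)|\le(|z|\,|w|)^{|\mathbf{x}|}$ by Lemma~\ref{thm:FK,Thm12.1.1}, the Cauchy kernel converges locally uniformly on $\{|z|\,|w|<1\}$, which for fixed $w\in\mathcal{D}$ contains a neighbourhood of the boundary point $z=e$; this justifies the term-by-term action of $\Phi_{\mathbf{k}}(\partial_z)$ at $z=e$ and the identification of the two sides as analytic functions of $w\in\mathcal{D}$.

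Finally, (2) follows by the identical method with the binomial kernel replaced by the exponential kernel $\int_K e^{(z|kw)}\,dk=\sum_{\mathbf{x}}\frac{d_{\mathbf{x}}}{(\tfrac nr)_{\mathbf{x}}}\Phi_{\mathbf{x}}(z)\Phi_{\mathbf{x}}(w)$, where the role of the displayed formula is played directly by the already-quoted identity $\Phi_{\mathbf{k}}(\partial_z)e^{(z|\xi)}|_{z=e}=\Phi_{\mathbf{k}}(\xi)e^{\tr\xi}$; applying it and using $K$-invariance of $\tr$ and $\Phi_{\mathbf{k}}$ collapses the $K$-integral to $e^{\tr w}\Phi_{\mathbf{k}}(w)$. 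Alternatively, (2) is the confluent limit of (1) under $w\mapsto w/\alpha$, $\alpha\to\infty$, with dominated convergence supplied by the polynomial bound on $\gamma_{\mathbf{k}}(\mathbf{x}-\rho)$ in Lemma~\ref{thm:estimate of shifted Jack1} and the majorisation of $(\alpha)_{\mathbf{x}}$ in Corollary~\ref{thm:ineq for generalized shifted factorial 2}. I expect the main obstacle to be the displayed power-function formula, and in particular verifying the domain hypotheses ($\Re\alpha>(r-1)\tfrac d2$ and $b-e\in\Omega+iV$) under which Lemma~\ref{thm:int formula} applies, after which the general statement follows by analytic continuation in $\alpha$ and in $w$.
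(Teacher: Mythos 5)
Your part (1) is essentially the paper's own proof: the same application of $\Phi_{\mathbf{k}}(\partial_z)|_{z=e}$ to the kernel of Lemma\,\ref{thm:Cauchy kernel of spherical poly}, the same double use of Lemma\,\ref{thm:int formula} (Laplace representation of $\Delta(kw^{-1}-z)^{-\alpha}$, differentiation under the integral via the symbol rule, reapplication with $p=\Phi_{\mathbf{k}}$, $y=w^{-1}-e$), the same collapse of the $K$-integral and identity $(w^{-1}-e)^{-1}=w(e-w)^{-1}$, and the same reduction by analytic continuation to $\Re{\alpha}>\frac{d}{2}(r-1)$ and $w$ in a real domain where $kw^{-1}-e\in\Omega$, with only a cosmetic difference in how convergence is certified (your Weierstrass-type argument on $\{|z|\,|w|<1\}$ versus the paper's direct absolute-convergence bound via Lemmas\,\ref{thm:FK,Thm12.1.1} and \ref{thm:estimate of shifted Jack1}). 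For part (2), your fallback route (the confluent limit $w\mapsto w/\alpha$, $\alpha\to\infty$ of (1)) is exactly what the paper does, while your primary suggestion via the exponential kernel $\int_K e^{(z|kw)}\,dk=\sum_{\mathbf{x}}d_{\mathbf{x}}(\tfrac{n}{r})_{\mathbf{x}}^{-1}\Phi_{\mathbf{x}}(z)\Phi_{\mathbf{x}}(w)$ is also valid but rests on that expansion, a standard Faraut--Koranyi fact which the paper itself never states or proves.
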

\begin{proof}
{\rm{(1)}}\,We take $w=u\sum_{j=1}^{r}\lambda_{j}c_{j} \in \mathcal{D}$ with $u \in U$ and $1>\lambda_{1}\geq \ldots\geq \lambda_{r}\geq 0$. 
By Lemmas\,\ref{thm:FK,Thm12.1.1} and \ref{thm:estimate of shifted Jack1}, there exist some $C>0$ and $N \in \mathbb{Z}_{\geq 0}$ such that
\begin{align}
\sum_{\mathbf{x} \in \mathscr{P}}\left|d_{\mathbf{x}}\frac{(\alpha)_{\mathbf{x}}}{\left(\frac{n}{r}\right)_{\mathbf{x}}}\gamma_{\mathbf{k}}(\mathbf{x}-\rho)\Phi_{\mathbf{x}}(w)\right|
&\leq \sum_{\mathbf{x} \in \mathscr{P}}d_{\mathbf{x}}\frac{|(\alpha)_{\mathbf{x}}|}{\left(\frac{n}{r}\right)_{\mathbf{x}}}|\gamma_{\mathbf{k}}(\mathbf{x}-\rho)||\Phi_{\mathbf{x}}(w)| \nonumber \\
&\leq C\prod_{l=1}^{r}\sum_{x_{l}\geq 0}\frac{(|\alpha|+d(r-1))_{x_{l}}}{x_{l}!}\left(x_{l}+\frac{d}{2}(r-1)\right)^{N}\lambda_{l}^{x_{l}}< \infty. \nonumber
\end{align}
Therefore, the right hand side of (\ref{eq:basic expansion 1}) converges absolutely. 
By analytic continuation, it is sufficient to show the assertion when $\Re{\alpha}>\frac{d}{2}(r-1)$ and $w \in \Omega \cap (e-\Omega) \subset \mathcal{D}$.
\begin{align}
\Phi_{\mathbf{k}}(\partial_{z})\sum_{\mathbf{x} \in \mathscr{P}}d_{\mathbf{x}}\frac{(\alpha)_{\mathbf{x}}}{\left(\frac{n}{r}\right)_{\mathbf{x}}}\Phi_{\mathbf{x}}(z)\Phi_{\mathbf{x}}(w)\bigg|_{z=e}
&=\sum_{\mathbf{x} \in \mathscr{P}}d_{\mathbf{x}}\frac{(\alpha)_{\mathbf{x}}}{\left(\frac{n}{r}\right)_{\mathbf{x}}}\Phi_{\mathbf{k}}(\partial_{z})\Phi_{\mathbf{x}}(z)|_{z=e}\Phi_{\mathbf{x}}(w) \nonumber \\
&=\sum_{\mathbf{x} \in \mathscr{P}}d_{\mathbf{x}}\frac{(\alpha)_{\mathbf{x}}}{\left(\frac{n}{r}\right)_{\mathbf{x}}}\gamma_{\mathbf{k}}(\mathbf{x}-\rho)\Phi_{\mathbf{x}}(w). \nonumber
\end{align}
On the other hand, 
% \begin{align}
% \Phi_{\mathbf{k}}(\partial_{z})\sum_{\mathbf{x} \in \mathscr{P}}d_{\mathbf{x}}\frac{(\alpha)_{\mathbf{x}}}{\left(\frac{n}{r}\right)_{\mathbf{x}}}\Phi_{\mathbf{x}}(z)\Phi_{\mathbf{x}}(w)\bigg|_{z=e}
% &=\Phi_{\mathbf{k}}(\partial_{z})\Delta(w)^{-\alpha}\int_{K}\Delta(kw^{-1}-z)^{-\alpha}dk\bigg|_{z=e} \nonumber \\
% &=\frac{\Delta(w)^{-\alpha}}{\Gamma_{\Omega}(\alpha)}
% \int_{K}\Phi_{\mathbf{k}}(\partial_{z})\int_{\Omega}e^{-(x|kw^{-1}-z)}\Delta(x)^{\alpha -\frac{n}{r}}\bigg|_{z=e}dxdk \nonumber \\
% &=\frac{\Delta(w)^{-\alpha}}{\Gamma_{\Omega}(\alpha)}
% \int_{K}\int_{\Omega}e^{-(x|kw^{-1})}\Delta(x)^{\alpha -\frac{n}{r}}\Phi_{\mathbf{k}}(\partial_{z})e^{(x|z)}\bigg|_{z=e}dxdk \nonumber \\
% &=\frac{\Delta(w)^{-\alpha}}{\Gamma_{\Omega}(\alpha)}
% \int_{\Omega}\int_{K}e^{-(x|kw^{-1}-e)}\Phi_{\mathbf{k}}(x)\Delta(x)^{\alpha -\frac{n}{r}}dkdx \nonumber \\
% &=\frac{\Delta(w)^{-\alpha}}{\Gamma_{\Omega}(\alpha)}
% \int_{\Omega}e^{-(x|w^{-1}-e)}\Phi_{\mathbf{k}}(x)\Delta(x)^{\alpha -\frac{n}{r}}dx \nonumber \\
% &=\frac{\Delta(w)^{-\alpha}}{\Gamma_{\Omega}(\alpha)}
% \int_{\Omega}e^{-(x|w^{-1}-e)}\Phi_{\mathbf{k}}(x)\Delta(x)^{\alpha -\frac{n}{r}}dx \nonumber \\
% &=\frac{\Delta(w)^{-\alpha}}{\Gamma_{\Omega}(\alpha)}\Gamma_{\Omega}(\alpha +\mathbf{k})\Delta(w^{})
% \end{align}
\begin{align}
\Phi_{\mathbf{k}}(\partial_{z})\sum_{\mathbf{x} \in \mathscr{P}}d_{\mathbf{x}}\frac{(\alpha)_{\mathbf{x}}}{\left(\frac{n}{r}\right)_{\mathbf{x}}}\Phi_{\mathbf{x}}(z)\Phi_{\mathbf{x}}(w)\bigg|_{z=e}
&=\Phi_{\mathbf{k}}(\partial_{z})\Delta(w)^{-\alpha}\int_{K}\Delta(kw^{-1}-z)^{-\alpha}\,dk\bigg|_{z=e} \nonumber \\
&=\Delta(w)^{-\alpha}\int_{K}\Phi_{\mathbf{k}}(\partial_{z})\Delta(kw^{-1}-z)^{-\alpha}\big|_{z=e}\,dk. \nonumber 
\end{align}
Here, from $kw^{-1}-z \in T_{\Omega}$ for all $k \in K$ and Lemma\,\ref{thm:int formula}, 
\begin{align}
\Phi_{\mathbf{k}}(\partial_{z})\Delta(kw^{-1}-z)^{-\alpha}\big|_{z=e}
&=\Phi_{\mathbf{k}}(\partial_{z})\frac{1}{\Gamma_{\Omega}(\alpha)}\int_{\Omega}e^{-(x|kw^{-1}-z)}\Delta(x)^{\alpha}\Delta(x)^{-\frac{n}{r}}\,dx\bigg|_{z=e} \nonumber \\
&=\frac{1}{\Gamma_{\Omega}(\alpha)}\int_{\Omega}\Phi_{\mathbf{k}}(\partial_{z})e^{(x|z)}|_{z=e}e^{-(x|kw^{-1})}\Delta(x)^{\alpha}\Delta(x)^{-\frac{n}{r}}\,dx \nonumber \\
&=\frac{1}{\Gamma_{\Omega}(\alpha)}\int_{\Omega}\Phi_{\mathbf{k}}(x)e^{-(kx|(w^{-1}-e))}\Delta(x)^{\alpha}\Delta(x)^{-\frac{n}{r}}\,dx \nonumber \\
&=(\alpha)_{\mathbf{k}}\Delta(w^{-1}-e)^{-\alpha}\Phi_{\mathbf{k}}((w^{-1}-e)^{-1}). \nonumber
\end{align}
Therefore, 
% \begin{align}
% \Phi_{\mathbf{k}}(\partial_{z})\sum_{\mathbf{x} \in \mathscr{P}}d_{\mathbf{x}}\frac{(\alpha)_{\mathbf{x}}}{\left(\frac{n}{r}\right)_{\mathbf{x}}}\Phi_{\mathbf{x}}(z)\Phi_{\mathbf{x}}(w)\bigg|_{z=e}
% &=\Phi_{\mathbf{k}}(\partial_{z})\Delta(w)^{-\alpha}\int_{K}\Delta(kw^{-1}-z)^{-\alpha}dk\bigg|_{z=e} \nonumber \\
% &=\Delta(w)^{-\alpha}\int_{K}\Phi_{\mathbf{k}}(\partial_{z})\Delta(kw^{-1}-z)^{-\alpha}\bigg|_{z=e}dk \nonumber \\
% &=\Delta(w)^{-\alpha}\int_{K}(\alpha)_{\mathbf{k}}\Delta(kw^{-1}-e)^{-\alpha}\Phi_{\mathbf{k}}((kw^{-1}-e)^{-1})dk \nonumber \\
% &=(\alpha)_{\mathbf{k}}\Delta(w)^{-\alpha}\Delta(w^{-1}-e)^{-\alpha}\Phi_{\mathbf{k}}((w^{-1}-e)^{-1}) \nonumber \\
% &=(\alpha)_{\mathbf{k}}\Delta(e-w)^{-\alpha}\Phi_{\mathbf{k}}(w(e-w)^{-1}). \nonumber
% \end{align}
\begin{align}
\Phi_{\mathbf{k}}(\partial_{z})\sum_{\mathbf{x} \in \mathscr{P}}d_{\mathbf{x}}\frac{(\alpha)_{\mathbf{x}}}{\left(\frac{n}{r}\right)_{\mathbf{x}}}\Phi_{\mathbf{x}}(z)\Phi_{\mathbf{x}}(w)\bigg|_{z=e}
&=\Delta(w)^{-\alpha}\int_{K}(\alpha)_{\mathbf{k}}\Delta(w^{-1}-e)^{-\alpha}\Phi_{\mathbf{k}}((w^{-1}-e)^{-1})\,dk \nonumber \\
%&=(\alpha)_{\mathbf{k}}\Delta(w)^{-\alpha}\Delta(w^{-1}-e)^{-\alpha}\Phi_{\mathbf{k}}((w^{-1}-e)^{-1}) \nonumber \\
&=(\alpha)_{\mathbf{k}}\Delta(e-w)^{-\alpha}\Phi_{\mathbf{k}}(w(e-w)^{-1}). \nonumber
\end{align}
{\rm{(2)}}\,Since 
% \begin{align}
% \lim_{\alpha \to \infty}(\alpha)_{\mathbf{k}}\Delta\left(e-\frac{w}{\alpha}\right)^{-\alpha}\Phi_{\mathbf{k}}\left(\frac{w}{\alpha}\left(e-\frac{w}{\alpha}\right)^{-1}\right)
% &=e^{\tr{w}}\Phi_{\mathbf{k}}(w). \nonumber 
% \end{align}
%and 
the right hand side of (\ref{eq:basic expansion 2}) converges absolutely due to a similar argument of {\rm{(1)}}, we have
\begin{align}
e^{\tr{w}}\Phi_{\mathbf{k}}(w)
&=\lim_{\alpha \to \infty}(\alpha)_{\mathbf{k}}\Delta\left(e-\frac{w}{\alpha}\right)^{-\alpha}\Phi_{\mathbf{k}}\left(\frac{w}{\alpha}\left(e-\frac{w}{\alpha}\right)^{-1}\right) \nonumber \\
&=\sum_{\mathbf{x} \in \mathscr{P}}d_{\mathbf{x}}\frac{1}{\left(\frac{n}{r}\right)_{\mathbf{x}}}\gamma_{\mathbf{k}}(\mathbf{x}-\rho)\lim_{\alpha \to \infty}(\alpha)_{\mathbf{x}}\Phi_{\mathbf{x}}\left(\frac{w}{\alpha}\right) \nonumber \\
&=\sum_{\mathbf{x} \in \mathscr{P}}d_{\mathbf{x}}\frac{1}{\left(\frac{n}{r}\right)_{\mathbf{x}}}\gamma_{\mathbf{k}}(\mathbf{x}-\rho)\Phi_{\mathbf{x}}(w). \nonumber
\end{align}
\end{proof}
Next we consider the gradient for a $\mathbb{C}$-valued function $f$ on a simple Euclidean Jordan algebra $V$. 
%and $V$-valued function $f$ on simple Euclidean Jordan algebra $V$. 
%, and recall some gradient results for some functions. 
In this part we refer to \cite{Di}. 
For a scalar or vector valued differentiable function $f$ we define the gradient $\nabla f(x)$ by 
$$
(\nabla f(x)|u)=D_{u}f(x)=\frac{d}{dt}f(x+tu)\bigg|_{t=0}.
$$
% Hence $\nabla f(x) \in V\otimes W$. 
For a $\mathbb{C}$-valued function $f=f_{1}+if_{2}$, we define $\nabla f=\nabla f_{1}+i\nabla f_{2}$. 
For $z=x+iy \in V^{\mathbb{C}}$, we define $D_{z}=D_{x}+iD_{y}$. 
%If $x=\sum_{j=1}^{r}x_{j}c_{j}, u=\sum_{j=1}^{r}u_{j}c_{j} \in V^{\mathbb{C}}$, then 
Moreover, if $\{e_{1},\ldots,e_{n}\}$ is an orthonormal basis of $V$ and $x=\sum_{j=1}^{n}x_{j}e_{j} \in V^{\mathbb{C}}$, then 
$$
\nabla f(x)=\sum_{j=1}^{n}\frac{{\partial}f(x)}{{\partial}x_{j}}e_{j}.
$$
We remark that this expression is independent of the choice of an orthonormal basis of $V$. 
%, this expression is well defined. 

For a $V$-valued function $f:V\rightarrow V$ expressed by $f(x)=\sum_{j=1}^{r}f_{j}(x)e_{j}$, we define $\nabla f$ by
$$
\nabla f(x)=\sum_{j,l=1}^{n}\frac{{\partial}f_{j}(x)}{{\partial}x_{l}}e_{j}e_{l}.
$$
That is also well defined. 
Let us present some derivation formulas. %for a $V$-valued function. 
\begin{lem}
\label{thm:derivation Lemma}
{\rm{(1)}}\,The product rule of differentiation: 
For $V$-valued function $f,h$, we have 
% \begin{equation}
% \tr{(\nabla_{x}(f(x)h(x)))}=\tr{(\nabla_{x}f(x))}h(x)+f(x)\tr{(\nabla_{x}h(x))}.
% \end{equation}
\begin{equation}
\tr{(\nabla(f(x)h(x)))}=\tr{(\nabla{f(x)})}h(x)+f(x)\tr{(\nabla{h(x)})}.
\end{equation}
For $\mathbb{C}$-valued functions $f,h$, 
\begin{equation}
\nabla(f(x)h(x))=(\nabla{f(x)})h(x)+f(x)(\nabla{h(x)}). 
\end{equation}
{\rm{(2)}}
\begin{equation}
\nabla{x}=\frac{n}{r}e.
\end{equation}
{\rm{(3)}}\,For any invertible element $x \in V^{\mathbb{C}}$, 
\begin{equation}
\tr{(x\nabla)}x^{-1}:=\tr{(x(\nabla{x^{-1}}))}=-\frac{n}{r}\tr{x^{-1}}.
\end{equation}
{\rm{(4)}}\,For $\beta \in \mathbb{C}$ and an invertible element $x \in V^{\mathbb{C}}$, 
\begin{equation}
\nabla (\Delta(x)^{\beta})=\beta \Delta(x)^{\beta}x^{-1}.
\end{equation}
\end{lem}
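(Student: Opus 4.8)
The plan is to fix an orthonormal basis $\{e_1,\ldots,e_n\}$ of $V$ for the trace form $(a|b)=\tr(ab)$ and reduce everything to the coordinate definition of $\nabla$ given just above. I write $e_je_l=\sum_{m}\gamma_{jl}^{m}e_m$ for the Jordan product, so the structure constants are $\gamma_{jl}^{m}=(e_je_l|e_m)=\tr\bigl((e_je_l)e_m\bigr)$. Two elementary facts will drive the whole argument. First, commutativity of the Jordan product together with associativity of the trace form, $\tr\bigl((ab)c\bigr)=\tr\bigl(a(bc)\bigr)$, shows that $\gamma_{jl}^{m}$ is symmetric in all three indices. Second, I will use the standard identity $\mathrm{Tr}\,L(u)=\tfrac{n}{r}\tr(u)$ relating the ordinary operator trace of the multiplication operator $L(u)$ to the Jordan trace. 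For the $\mathbb{C}$-valued product rule in {\rm(1)} there is nothing to do beyond the componentwise Leibniz rule, since $\nabla(fh)=\sum_{j}\partial_{x_j}(fh)\,e_j$.

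For the $V$-valued product rule in {\rm(1)}, I would compute $\tr\bigl(\nabla(fh)\bigr)$ directly. Expanding $fh=\sum_{m}(fh)_m e_m$ with $(fh)_m=\sum_{j,k}f_jh_k\gamma_{jk}^{m}$ and using $\tr(e_me_l)=\delta_{ml}$, one finds $\tr(\nabla(fh))=\sum_{m}\partial_{x_m}(fh)_m$, which the Leibniz rule splits into a term linear in the derivatives of $f$ and one linear in those of $h$. The full symmetry of $\gamma_{jk}^{m}$ is exactly what is needed to recognise the first term as $\tr\bigl((\nabla f)\,h\bigr)$ and the second as $\tr\bigl(f\,(\nabla h)\bigr)$, which is the asserted identity (the outer trace is understood to act on the whole product in each summand). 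Statement {\rm(2)} reduces to $\nabla x$ for the identity map $f(x)=x$: here $f_j(x)=x_j$, so $\nabla x=\sum_{j,l}\delta_{jl}e_je_l=\sum_j e_j^2$. Pairing against an arbitrary $u\in V$ and using trace associativity, $(\sum_j e_j^2|u)=\sum_j(e_j|L(u)e_j)=\mathrm{Tr}\,L(u)=\tfrac{n}{r}\tr(u)=\tfrac{n}{r}(e|u)$, whence $\nabla x=\tfrac{n}{r}e$.

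For {\rm(4)}, since $\Delta(x)^{\beta}$ is $\mathbb{C}$-valued I would use the directional-derivative characterisation of $\nabla$ together with the standard logarithmic-derivative formula $D_u\Delta(x)=\Delta(x)\tr(x^{-1}u)=\Delta(x)(x^{-1}|u)$ for the Jordan determinant. The chain rule then gives $D_u\Delta(x)^{\beta}=\beta\Delta(x)^{\beta}(x^{-1}|u)$ for every $u$, i.e. $\nabla\Delta(x)^{\beta}=\beta\Delta(x)^{\beta}x^{-1}$. Finally, {\rm(3)} follows formally from {\rm(1)} and {\rm(2)}: apply the $V$-valued product rule to $f(x)=x$ and $h(x)=x^{-1}$. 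Since $x\,x^{-1}=e$ is constant, $\nabla(fh)=0$, so $0=\tr\bigl((\nabla x)\,x^{-1}\bigr)+\tr\bigl(x\,(\nabla x^{-1})\bigr)$; substituting $\nabla x=\tfrac{n}{r}e$ and $e\,x^{-1}=x^{-1}$ yields $\tr\bigl(x(\nabla x^{-1})\bigr)=-\tfrac{n}{r}\tr(x^{-1})$.

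I expect the only genuine obstacles to be the two imported Jordan-algebra facts — the symmetry of the structure constants (immediate) together with $\mathrm{Tr}\,L(u)=\tfrac{n}{r}\tr(u)$ and the determinant gradient, both standard from Faraut--Koranyi — and the bookkeeping needed to read {\rm(1)} as a true identity of scalars once the outer trace is applied to each summand. Everything else is routine coordinate computation, and it is worth noting that {\rm(3)} costs nothing beyond {\rm(1)} and {\rm(2)}, so the real content is concentrated in the symmetry of $\gamma_{jk}^{m}$ and in formula {\rm(2)}.
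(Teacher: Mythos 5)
Your proposal is correct. For the one part the paper actually proves, namely {\rm(3)}, your argument is identical to the paper's: apply the $V$-valued product rule to $x\,x^{-1}=e$, use $\nabla(e)=0$ and $\nabla x = \tfrac{n}{r}e$, and conclude. The difference is that the paper simply cites {\rm(1)}, {\rm(2)} and {\rm(4)} as well known (from Faraut--Koranyi, Ding, and Faraut--Wakayama), whereas you supply coordinate proofs of them, importing only the two standard facts $\mathrm{Tr}\,L(u)=\tfrac{n}{r}\tr(u)$ and $D_u\Delta(x)=\Delta(x)\,(x^{-1}|u)$. Your derivations check out: the full symmetry of $\gamma_{jk}^{m}=\tr\bigl((e_je_k)e_m\bigr)$ (commutativity plus associativity of the trace form) is indeed exactly what turns the two Leibniz terms of $\sum_m \partial_{x_m}(fh)_m$ into $\tr\bigl((\nabla f)h\bigr)$ and $\tr\bigl(f(\nabla h)\bigr)$, and the computation $\bigl(\sum_j e_j^2\,\big|\,u\bigr)=\mathrm{Tr}\,L(u)$ gives {\rm(2)}. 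You also correctly resolved a genuine notational trap: the identity in {\rm(1)} must be read with the trace applied to each whole product (as in the paper's own notation $\tr(x\nabla)x^{-1}:=\tr\bigl(x(\nabla x^{-1})\bigr)$), since the literal reading would equate a scalar with an element of $V$ and would make the paper's derivation of {\rm(3)} incoherent. So your write-up is more self-contained than the paper's; what the paper's choice buys is brevity, deferring these standard Jordan-algebra facts to the references.
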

\noindent
{\rm{(1)}},\,{\rm{(2)}}, and {\rm{(4)}} are well known (see \cite{FK}, \cite{Di}, and \cite{FW1}). 
{\rm{(3)}} follows from {\rm{(1)}}, {\rm{(2)}}, and $\nabla(x x^{-1})=\nabla(e)=0$.
 
% \begin{lem}
% \label{thm:Pieri}
% {\rm{(1)}}\,
% \begin{align}
% \label{eq:Pieri 1}
% (\tr{u})\varphi_{\mathbf{s}}(u)=\sum_{j=1}^{r}a_{j}(\mathbf{s})\varphi_{\mathbf{s}+\epsilon_{j}}(u).
% \end{align}
% Here, 
% \begin{equation}
% a_{j}(\mathbf{s}):=\frac{c(\mathbf{s})}{c(\mathbf{s}+\epsilon_{j})}=\prod_{k\not=j}\frac{s_{j}-s_{k}+\frac{d}{2}}{s_{j}-s_{k}}.
% \end{equation}
% {\rm{(2)}}\,
% \begin{align}
% \label{eq:Pieri 2}
% (\tr{\nabla_{u}})\varphi_{\mathbf{s}}(u)=\sum_{j=1}^{r}\left(s_{j}+\frac{d}{4}(r-1)\right)a_{j}(-\mathbf{s})\varphi_{\mathbf{s}-\epsilon_{j}}(u).
% \end{align}
% {\rm{(3)}}\,
% \begin{align}
% \label{eq:Pieri 3}
% (\tr{(u^{2}\nabla_{u}}))\varphi_{\mathbf{s}}(u)=\sum_{j=1}^{r}\left(s_{j}-\frac{d}{4}(r-1)\right)a_{j}(\mathbf{s})\varphi_{\mathbf{s}+\epsilon_{j}}(u).
% \end{align}
% \end{lem}

The following recurrence formulas for the spherical functions, some of which involve the gradient, are also well known (see \cite{Di} and \cite{FW1}). 
\begin{lem}
\label{thm:Pieri}
Let $\mathbf{s} \in \mathbb{C}^{r}$ and $x \in V^{\mathbb{C}}$. 
Put 
\begin{equation}
a_{j}(\mathbf{s}):=\frac{c(\mathbf{s})}{c(\mathbf{s}+\epsilon_{j})}=\prod_{k\not=j}\frac{s_{j}-s_{k}+\frac{d}{2}}{s_{j}-s_{k}}, 
\end{equation}
where $\epsilon_{j}:=(0,\ldots,0,\stackrel{j}{\stackrel{\vee}{1}},0,\ldots,0)$. 
Then, 
\begin{align}
\label{eq:Pieri 1}
(\tr{x})\varphi_{\mathbf{s}}(x)&=\sum_{j=1}^{r}a_{j}(\mathbf{s})\varphi_{\mathbf{s}+\epsilon_{j}}(x), \\
\label{eq:Pieri 2}
(\tr{\nabla})\varphi_{\mathbf{s}}(x)&=\sum_{j=1}^{r}\left(s_{j}+\frac{d}{4}(r-1)\right)a_{j}(-\mathbf{s})\varphi_{\mathbf{s}-\epsilon_{j}}(x), \\
\label{eq:Pieri 3}
(\tr{(x^{2}\nabla}))\varphi_{\mathbf{s}}(x)&=\sum_{j=1}^{r}\left(s_{j}-\frac{d}{4}(r-1)\right)a_{j}(\mathbf{s})\varphi_{\mathbf{s}+\epsilon_{j}}(x).
\end{align}
\end{lem}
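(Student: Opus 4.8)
The plan is to treat these as three classical Jack-polynomial recurrences in disguise, by first reducing everything to the flat $\{x=\sum_{j}\lambda_{j}c_{j}\}$. Each of the left-hand sides sends $K$-invariant functions to $K$-invariant functions: $(\tr x)\varphi_{\mathbf{s}}$ is a product of $K$-invariants, while for $k\in K=\operatorname{Aut}(V)^{\circ}$ one has $ke=e$ and $k(x^{2})=(kx)^{2}$, so writing $\tr(\nabla f)=(\nabla f\mid e)=\frac{d}{dt}f(x+te)|_{0}$ and $\tr(x^{2}\nabla)f=(\nabla f\mid x^{2})=\frac{d}{dt}f(x+tx^{2})|_{0}$ shows directly that $\tr(\nabla)\varphi_{\mathbf{s}}$ and $\tr(x^{2}\nabla)\varphi_{\mathbf{s}}$ are again $K$-invariant. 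Hence each identity holds on $\Omega$ iff it holds on the flat. Since $e=\sum_{j}c_{j}$ and $x^{2}=\sum_{j}\lambda_{j}^{2}c_{j}$ are both tangent to the flat, the curves $x+te=\sum_{j}(\lambda_{j}+t)c_{j}$ and $x+tx^{2}=\sum_{j}(\lambda_{j}+t\lambda_{j}^{2})c_{j}$ stay on it, so the radial parts are exactly
\[
\mathrm{rad}(\tr x)=\sum_{j=1}^{r}\lambda_{j},\quad
\mathrm{rad}(\tr\nabla)=\sum_{j=1}^{r}\frac{\partial}{\partial\lambda_{j}},\quad
\mathrm{rad}(\tr(x^{2}\nabla))=\sum_{j=1}^{r}\lambda_{j}^{2}\frac{\partial}{\partial\lambda_{j}},
\]
with no extra first-order terms (unlike for the Laplacian). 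Thus \eqref{eq:Pieri 1}, \eqref{eq:Pieri 2}, \eqref{eq:Pieri 3} become, respectively, a multiplication, a ``lowering'' and a ``raising'' recurrence among the symmetric functions $\varphi_{\mathbf{s}}(\lambda_{1},\dots,\lambda_{r})$.

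Next I would prove the three recurrences at the discrete parameters $\mathbf{s}=\mathbf{m}-\rho$, $\mathbf{m}\in\mathscr{P}$, where $\varphi_{\mathbf{s}}=\Phi_{\mathbf{m}}$ is the normalized Jack polynomial $P_{\mathbf{m}}^{(2/d)}/P_{\mathbf{m}}^{(2/d)}(1,\dots,1)$ from the Remark. There \eqref{eq:Pieri 1} is the multiplication $p_{1}\cdot P_{\mathbf{m}}^{(2/d)}=\sum_{j}\psi_{j}\,P_{\mathbf{m}+\epsilon_{j}}^{(2/d)}$, i.e.\ the classical Pieri rule for Jack polynomials (\cite{M}, Ch.\ VI.6), while \eqref{eq:Pieri 2} and \eqref{eq:Pieri 3} are the corresponding box-removal and quadratic box-addition rules for $\sum_{j}\partial_{\lambda_{j}}$ and $\sum_{j}\lambda_{j}^{2}\partial_{\lambda_{j}}$. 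Converting from the Jack to the spherical normalization and substituting $\rho_{j}-\rho_{k}=\frac{d}{2}(j-k)$ identifies the coefficient of $\Phi_{\mathbf{m}+\epsilon_{j}}$ with $a_{j}(\mathbf{m}-\rho)=\prod_{k\neq j}\frac{m_{j}-m_{k}+\frac{d}{2}(k-j)+\frac{d}{2}}{m_{j}-m_{k}+\frac{d}{2}(k-j)}$, which vanishes precisely when $\mathbf{m}+\epsilon_{j}$ is not a partition, and produces the scalar prefactors $s_{j}\pm\frac{d}{4}(r-1)$ (e.g.\ $s_{j}+\frac{d}{4}(r-1)=m_{j}+\frac{d}{2}(r-j)$ at $\mathbf{s}=\mathbf{m}-\rho$). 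The appearance of the two mirror prefactors is explained structurally by the fact that $P(x)e=2L(x)^{2}e-L(x^{2})e=x^{2}$, so $\{\tr\nabla,\ \tr(x\nabla),\ \tr(x^{2}\nabla)\}$ is the $\mathfrak{sl}_{2}$-triple attached to $e$ in the conformal grading, the factors being its $\mathfrak{sl}_{2}$-weights.

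I would then pass from the lattice $\{\mathbf{m}-\rho\}$ to all $\mathbf{s}\in\mathbb{C}^{r}$ by analytic continuation. For fixed $x\in\Omega$ the functions $\varphi_{\mathbf{s}}(x)$, $\varphi_{\mathbf{s}\pm\epsilon_{j}}(x)$, $(\tr x)\varphi_{\mathbf{s}}(x)$, $\tr(\nabla)\varphi_{\mathbf{s}}(x)$ and $\tr(x^{2}\nabla)\varphi_{\mathbf{s}}(x)$ are entire in $\mathbf{s}$ (the integrand $\Delta_{\mathbf{s}+\rho}(kx)$ is holomorphic in $\mathbf{s}$ and $K$ is compact), whereas $a_{j}(\mathbf{s})$ and $s_{j}\pm\frac{d}{4}(r-1)$ are rational in $\mathbf{s}$; since $\{\mathbf{m}-\rho:\mathbf{m}\in\mathscr{P}\}$ is Zariski-dense in $\mathbb{C}^{r}$, the three meromorphic identities extend from the lattice to every $\mathbf{s}$. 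As an independent cross-check I would invert \eqref{eq:Pieri 1} using $\varphi_{\mathbf{s}}(x^{-1})=\varphi_{-\mathbf{s}}(x)$ and the Jordan chain rule for $x\mapsto x^{-1}$; this turns it into a lowering formula governed by $a_{j}(-\mathbf{s})$, matching the $a_{j}(-\mathbf{s})$ occurring in \eqref{eq:Pieri 2}.

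The main obstacle is not the shift structure — which is forced by degree counting and $K$-invariance once the radial parts are computed — but the exact determination of the coefficients: the precise Pieri, dual-Pieri and derivative constants for Jack polynomials at parameter $2/d$, together with the careful bookkeeping relating the normalizations $P_{\mathbf{m}}^{(2/d)}$ and $\Phi_{\mathbf{m}}$ and the $\rho$-shift, which is exactly where the factors $s_{j}\pm\frac{d}{4}(r-1)$ must be pinned down. An alternative that sidesteps Jack combinatorics is to match leading terms in the Harish-Chandra asymptotic expansion $\varphi_{\mathbf{s}}\sim\sum_{w}c(w\mathbf{s})\,e^{\langle w\mathbf{s},H(\cdot)\rangle}$, where the ratio $c(\mathbf{s})/c(\mathbf{s}+\epsilon_{j})=a_{j}(\mathbf{s})$ emerges transparently for the dominant exponential, the remaining coefficients then following from Weyl symmetry; this is essentially the route of \cite{Di} and \cite{FW1}.
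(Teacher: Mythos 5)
The paper itself offers no proof of this lemma: it is quoted as well known, with a pointer to \cite{Di} and \cite{FW1} (where the coefficient $a_{j}(\mathbf{s})=c(\mathbf{s})/c(\mathbf{s}+\epsilon_{j})$ arises from the Harish-Chandra series, i.e.\ essentially the alternative you only sketch at the end), so your Jack-combinatorial route is genuinely different and must stand on its own. Its first half does: the observation that $e$ and $x^{2}$ are tangent to the flat $\{\sum_{j}\lambda_{j}c_{j}\}$, so that on $K$-invariant functions $\tr\nabla$ and $\tr(x^{2}\nabla)$ have radial parts $\sum_{j}\partial_{\lambda_{j}}$ and $\sum_{j}\lambda_{j}^{2}\partial_{\lambda_{j}}$ with no correction terms, is correct, as is the check that $a_{j}(\mathbf{m}-\rho)$ vanishes exactly when $\mathbf{m}+\epsilon_{j}\notin\mathscr{P}$. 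The genuine gap is the continuation step. The set $\{\mathbf{m}-\rho:\mathbf{m}\in\mathscr{P}\}$ is indeed Zariski-dense in $\mathbb{C}^{r}$, but Zariski density propagates only polynomial or rational identities, and yours are not polynomial in $\mathbf{s}$: for fixed $x$ the function $\mathbf{s}\mapsto\varphi_{\mathbf{s}}(x)$ is entire and non-polynomial, and an entire function can vanish on a lattice without vanishing identically --- $\sin(\pi s_{1})$ is the standard counterexample, and nothing in your argument rules out the two sides of \eqref{eq:Pieri 1} differing by such a factor. The standard repair in exactly this situation is Carlson's theorem: after clearing the denominators of $a_{j}(\mathbf{s})$, one needs an exponential-type bound $|\varphi_{\mathbf{s}}(x)|\leq Ce^{\tau(|s_{1}|+\cdots+|s_{r}|)}$ with $\tau<\pi$, valid when the eigenvalues of $x$ are close enough to $1$; Carlson is then applied variable by variable (the relevant sections of the lattice are shifted copies of $\mathbb{Z}_{\geq 0}$), and the restriction on $x$ is finally removed by real-analyticity in $x$. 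Without this, or some substitute, the proof does not close.

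A secondary but real problem is the appeal to ``classical'' facts for \eqref{eq:Pieri 2} and \eqref{eq:Pieri 3}. Macdonald \cite{M} contains the multiplication Pieri rule you need for \eqref{eq:Pieri 1} at the lattice points, but it contains no box-removal rule for $\sum_{j}\partial_{\lambda_{j}}$ and no rule for $\sum_{j}\lambda_{j}^{2}\partial_{\lambda_{j}}$; those statements at $\mathbf{s}=\mathbf{m}-\rho$ \emph{are} the content of \eqref{eq:Pieri 2}--\eqref{eq:Pieri 3}, and in the literature they live precisely in \cite{BF}, \cite{Di}, \cite{FW1}, so citing them is close to circular. They can be recovered inside your framework, though: \eqref{eq:Pieri 2} at lattice points follows from the multiplication rule by adjointness in the Fischer pairing, since $\tr(x)\cdot$ and $\tr\nabla$ are mutually adjoint there and $\Phi_{\mathbf{m}}(\partial_{x})\Phi_{\mathbf{m}}(x)|_{x=0}=\left(\tfrac{n}{r}\right)_{\mathbf{m}}/d_{\mathbf{m}}$; and \eqref{eq:Pieri 3} follows from \eqref{eq:Pieri 2} for \emph{all} $\mathbf{s}$ at once via the inversion: since $P(x^{-1})x^{2}=e$, one has $\tr(x^{2}\nabla)\bigl[f(x^{-1})\bigr]=-(\tr\nabla f)(x^{-1})$, and applying this with $f=\varphi_{-\mathbf{s}}$ converts \eqref{eq:Pieri 2} at $-\mathbf{s}$ into \eqref{eq:Pieri 3} at $\mathbf{s}$, including the sign and the factor $s_{j}-\frac{d}{4}(r-1)$. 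Note, by contrast, that your stated ``cross-check'' --- inverting \eqref{eq:Pieri 1} --- yields a recurrence for multiplication by $\tr(x^{-1})$, not for $\tr\nabla$, so it does not actually corroborate \eqref{eq:Pieri 2}.
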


\subsection{Multivariate Laguerre polynomials and their unitary picture}
In this subsection, we promote a unitary picture associated with the multivariate Laguerre polynomials %which becomes important in the later, 
and provide some fundamental lemmas based on \cite{DOZ}, \cite{FK} and \cite{FW1}.

First, we recall some function spaces and their complete orthogonal basis as in the case of one variable. 
Let $\alpha >2\frac{n}{r}-1$, $\mathbf{m} \in \mathscr{P}$.
\\
{\bf{(1)}}\,\,$\psi_{\mathbf{m}}^{(\alpha)}$\,;\,Multivariate Laguerre polynomials (up to an exponential factor)
\begin{align}
L^{2}_{\alpha}(\Omega)^{K}&:=\{\psi:\Omega \longrightarrow \mathbb{C} \mid \psi \text{ is $K$-invariant and } \|\psi\|_{\alpha,\Omega}^{2}<\infty\}, \nonumber \\
\|\psi\|_{\alpha,\Omega}^{2}&:=\frac{2^{r\alpha}}{\Gamma_{\Omega}(\alpha)}\int_{\Omega}|\psi(u)|^{2}\Delta(u)^{\alpha -\frac{n}{r}}\,du, \nonumber \\
\psi_{\mathbf{m}}^{(\alpha)}(u)&:=e^{-\tr{u}}L_{\mathbf{m}}^{(\alpha-\frac{n}{r})}(2u). \nonumber %=\frac{(\alpha)_{m}}{m!}e^{-u}{_{1}F_1}\left(\begin{matrix}-m\\ \alpha  \end{matrix};2u\right)
\end{align}
Here, $L_{\mathbf{m}}^{\left(\alpha -\frac{n}{r}\right)}(u)$ is the multivariate Laguerre polynomial defined by 
\begin{align}
L_{\mathbf{m}}^{\left(\alpha -\frac{n}{r}\right)}(u)
&:=d_{\mathbf{m}}\frac{(\alpha)_{\mathbf{m}}}{\left(\frac{n}{r}\right)_{\mathbf{m}}}
\sum_{\mathbf{k}\subset \mathbf{m}}(-1)^{|\mathbf{k}|}\binom{\mathbf{m}}{\mathbf{k}}\frac{1}{(\alpha)_{\mathbf{k}}}\Phi_{\mathbf{k}}(u) \nonumber \\
&=d_{\mathbf{m}}\frac{(\alpha)_{\mathbf{m}}}{\left(\frac{n}{r}\right)_{\mathbf{m}}}
\sum_{\mathbf{k}\subset \mathbf{m}}(-1)^{|\mathbf{k}|}d_{\mathbf{k}}\frac{\gamma_{\mathbf{k}}(\mathbf{m}-\rho)}{\left(\frac{n}{r}\right)_{\mathbf{k}}(\alpha)_{\mathbf{k}}}\Phi_{\mathbf{k}}(u). \nonumber
\end{align}

\noindent
{\bf{(2)}}\,\,$F_{\mathbf{m}}^{(\alpha)}$\,;\,Cayley transform of the spherical polynomials 
\begin{align}
\mathcal{H}^{2}_{\alpha}(T_{\Omega})^{K}&:=\{F:T_{\Omega} \longrightarrow \mathbb{C} \mid F \text{ is $K$-invariant and analytic in $T_{\Omega}$, and }\|F\|_{\alpha,T_{\Omega}}^{2}<\infty\}, \nonumber \\
\|F\|_{\alpha,T_{\Omega}}^{2}&:=\frac{1}{(4\pi)^{n}}\frac{\Gamma_{\Omega}(\alpha)}{\Gamma_{\Omega}\left(\alpha-\frac{n}{r}\right)}\int_{T_{\Omega}}|F(z)|^{2}\Delta(x)^{\alpha -\frac{2n}{r}}\,m(dz), \nonumber \\
F_{\mathbf{m}}^{(\alpha)}(z)&:=d_{\mathbf{m}}\frac{(\alpha)_{\mathbf{m}}}{\left(\frac{n}{r}\right)_{\mathbf{m}}}
\Delta\left(\frac{e+z}{2}\right)^{-\alpha}\Phi_{\mathbf{m}}((z-e)(z+e)^{-1}). \nonumber
\end{align}
{\bf{(3)}}\,\,$f_{\mathbf{m}}^{(\alpha)}$\,;\,spherical polynomials
\begin{align}
\mathcal{H}^{2}_{\alpha}(\mathcal{D})^{K}&:=\{f:\mathcal{D} \longrightarrow \mathbb{C} \mid f \text{ is $K$-invariant and analytic in $\mathcal{D}$, and }\|f\|_{\alpha,\mathcal{D}}^{2}<\infty\}, \nonumber \\
\|f\|_{\alpha,\mathcal{D}}^{2}&:=\frac{1}{{\pi}^{n}}\frac{\Gamma_{\Omega}(\alpha)}{\Gamma_{\Omega}\left(\alpha-\frac{n}{r}\right)}
\int_{\mathcal{D}}|f(w)|^{2}h(w)^{\alpha -\frac{2n}{r}}\,m(dw), \nonumber \\
h(w)&:=\Det(I_{V^{\mathbb{C}}}-2w\Box{\overline{w}}+P(w)P(\overline{w}))^{\frac{r}{2n}}, \nonumber \\
f_{\mathbf{m}}^{(\alpha)}(w)&:=d_{\mathbf{m}}\frac{(\alpha)_{\mathbf{m}}}{\left(\frac{n}{r}\right)_{\mathbf{m}}}\Phi_{\mathbf{m}}(u).\nonumber 
\end{align}
Here, $\Det$ stands for the usual determinant of a complex linear operator on $V^{\mathbb{C}}$.

We remark that
$$
\|\psi_{\mathbf{m}}^{(\alpha)}\|_{\alpha,\Omega}^{2}=\|F_{\mathbf{m}}^{(\alpha)}\|_{\alpha,T_{\Omega}}^{2}
=\|f_{\mathbf{m}}^{(\alpha)}\|_{\alpha,\mathcal{D}}^{2}=d_{\mathbf{m}}\frac{(\alpha)_{\mathbf{m}}}{\left(\frac{n}{r}\right)_{\mathbf{m}}}
$$
and the orthogonality relations of $\psi_{\mathbf{m}}^{(\alpha)}$ also hold for $\alpha >\frac{n}{r}-1$.

Next, similar to the one variable case, we will consider some unitary isomorphisms. %that provide the connection between the above function spaces.
\noindent
\underline{Modified Laplace transform}
\begin{align}
\mathcal{L}_{\alpha}:L^{2}_{\alpha}(\Omega)^{K} \xrightarrow{\simeq}  \mathcal{H}^{2}_{\alpha}(T_{\Omega})^{K},
\,\,\,(\mathcal{L}_{\alpha}\psi)(z):=\frac{2^{r\alpha}}{\Gamma_{\Omega}(\alpha)}\int_{\Omega}e^{-(z|u)}\Delta(u)^{\alpha-\frac{n}{r}}\psi(u)\,du.\nonumber
\end{align}
\underline{Modified Cayley transform}
\begin{align}
C_{\alpha}^{-1}:\mathcal{H}^{2}_{\alpha}(T_{\Omega})^{K} \xrightarrow{\simeq} \mathcal{H}^{2}_{\alpha}(\mathcal{D})^{K},\,\,\,(C_{\alpha}^{-1}F)(w):=\Delta(e-w)^{-\alpha}F\left((e+w)(e-w)^{-1}\right). \nonumber
\end{align}

To summarize the above, we obtain the following picture.
\begin{align}
\label{eq:unitary picture2}
\begin{array}{cccccc}
L^{2}_{\alpha}(\Omega)^{K} & \xrightarrow[\mathcal{L}_{\alpha}]{\simeq} & \mathcal{H}^{2}_{\alpha}(T_{\Omega})^{K} & \xrightarrow[C_{\alpha}^{-1}]{\simeq} & \mathcal{H}^{2}_{\alpha}(\mathcal{D})^{K}. & \\
\rotatebox{90}{$\in$}  & & \rotatebox{90}{$\in$} & & \rotatebox{90}{$\in$} & \\
\psi_{m}^{(\alpha)}  & \longmapsto & F_{m}^{(\alpha)}& \longmapsto & f_{m}^{(\alpha)} &  \\
{\bf{(1)}} &  & {\bf{(2)}} &  & {\bf{(3)}} & 
\end{array}
\end{align}
\begin{lem}
\label{thm:generating fnc of Laguerre and MP}
For any $\alpha \in \mathbb{C},u \in \Omega$ and $z \in \mathcal{D}$, we have
\begin{equation}
\label{eq:generating fnc of Laguerre}
\sum_{\mathbf{m} \in \mathscr{P}}L_{\mathbf{m}}^{\left(\alpha -\frac{n}{r}\right)}(u)\Phi_{\mathbf{m}}(z)=\Delta(e-z)^{-\alpha}\int_{K}e^{-(ku|z(e-z)^{-1})}\,dk.
\end{equation}
% Here, we define the branch by $\Delta(e)^{-\alpha}=1$.
\end{lem}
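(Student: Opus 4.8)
The plan is to expand each Laguerre polynomial by its definition, interchange the two summations, and then collapse the resulting inner and outer sums using Theorem~\ref{thm:spherical Taylor expan lem} and the exponential expansion of the $K$-average of $e^{(ku|w)}$. First I would substitute
$$
L_{\mathbf{m}}^{\left(\alpha-\frac{n}{r}\right)}(u)=d_{\mathbf{m}}\frac{(\alpha)_{\mathbf{m}}}{\left(\frac{n}{r}\right)_{\mathbf{m}}}\sum_{\mathbf{k}\subset\mathbf{m}}(-1)^{|\mathbf{k}|}\binom{\mathbf{m}}{\mathbf{k}}\frac{1}{(\alpha)_{\mathbf{k}}}\Phi_{\mathbf{k}}(u)
$$
into the left-hand side and, after checking absolute convergence of the double series, reorganize it with the sum over $\mathbf{k}$ outermost:
$$
\sum_{\mathbf{m}\in\mathscr{P}}L_{\mathbf{m}}^{\left(\alpha-\frac{n}{r}\right)}(u)\Phi_{\mathbf{m}}(z)=\sum_{\mathbf{k}\in\mathscr{P}}\frac{(-1)^{|\mathbf{k}|}}{(\alpha)_{\mathbf{k}}}\Phi_{\mathbf{k}}(u)\sum_{\mathbf{m}\supset\mathbf{k}}d_{\mathbf{m}}\frac{(\alpha)_{\mathbf{m}}}{\left(\frac{n}{r}\right)_{\mathbf{m}}}\binom{\mathbf{m}}{\mathbf{k}}\Phi_{\mathbf{m}}(z).
$$

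For the inner sum I would insert the identity $\binom{\mathbf{m}}{\mathbf{k}}=d_{\mathbf{k}}\frac{1}{\left(\frac{n}{r}\right)_{\mathbf{k}}}\gamma_{\mathbf{k}}(\mathbf{m}-\rho)$ recorded before the theorem. Since this coefficient vanishes unless $\mathbf{k}\subset\mathbf{m}$, the constraint $\mathbf{m}\supset\mathbf{k}$ may be dropped and the inner sum becomes exactly the series on the right of (\ref{eq:basic expansion 1}) with $w=z$; by Theorem~\ref{thm:spherical Taylor expan lem}(1) it equals $(\alpha)_{\mathbf{k}}\Delta(e-z)^{-\alpha}\Phi_{\mathbf{k}}(z(e-z)^{-1})$. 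The factors $(\alpha)_{\mathbf{k}}$ cancel, leaving
$$
\Delta(e-z)^{-\alpha}\sum_{\mathbf{k}\in\mathscr{P}}(-1)^{|\mathbf{k}|}\frac{d_{\mathbf{k}}}{\left(\frac{n}{r}\right)_{\mathbf{k}}}\Phi_{\mathbf{k}}(u)\Phi_{\mathbf{k}}(z(e-z)^{-1}).
$$

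To finish I would use the homogeneity $(-1)^{|\mathbf{k}|}\Phi_{\mathbf{k}}(v)=\Phi_{\mathbf{k}}(-v)$ together with the exponential reproducing identity
$$
\sum_{\mathbf{k}\in\mathscr{P}}\frac{d_{\mathbf{k}}}{\left(\frac{n}{r}\right)_{\mathbf{k}}}\Phi_{\mathbf{k}}(u)\Phi_{\mathbf{k}}(w)=\int_{K}e^{(ku|w)}\,dk,
$$
evaluated at $w=-z(e-z)^{-1}$, which turns the remaining sum into $\int_{K}e^{-(ku|z(e-z)^{-1})}\,dk$ and yields the claimed formula. This reproducing identity is the standard Faraut--Koranyi expansion; if one wishes to stay inside the excerpt it is obtained as the degenerate limit $\alpha\to\infty$ of Lemma~\ref{thm:Cauchy kernel of spherical poly} after replacing $z$ by $z/\alpha$, using $(\alpha)_{\mathbf{m}}\Phi_{\mathbf{m}}(z/\alpha)\to\Phi_{\mathbf{m}}(z)$ on the left and $\Delta(e-A/\alpha)^{-\alpha}\to e^{\tr A}$ together with $(P(w^{1/2})kz|e)=(kz|w)$ on the right.

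I expect the one genuinely delicate point to be the justification of the interchange of summation, i.e.\ the absolute convergence of the double series for every $u\in\Omega$ and $z\in\mathcal{D}$. The naive bound that drops the factor $1/(\alpha)_{\mathbf{k}}$ is too lossy — it would only give a proper sub-ball — so it is essential to keep this factor, which forces the inner $\mathbf{k}$-sum to grow only subexponentially in $|\mathbf{m}|$. Concretely I would bound $|\Phi_{\mathbf{m}}(z)|$ by $\lambda_{1}^{|\mathbf{m}|}<1$ via Lemma~\ref{thm:FK,Thm12.1.1}, control $|\Phi_{\mathbf{k}}(u)|$ and the shifted-factorial ratios through Lemma~\ref{thm:FK,Thm12.1.1} and Corollary~\ref{thm:ineq for generalized shifted factorial 2}, and estimate $\gamma_{\mathbf{k}}(\mathbf{m}-\rho)$ by the polynomial bound of Lemma~\ref{thm:estimate of shifted Jack1}, exactly in the spirit of the convergence argument already used inside the proof of Theorem~\ref{thm:spherical Taylor expan lem}; this reduces the estimate to a product of convergent one-variable series and legitimizes Fubini. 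Once the interchange is secured, everything else is the bookkeeping cancellation above plus the two cited expansions.
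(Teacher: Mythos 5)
Your derivation is correct in substance, but it is a genuinely different route from the paper's. The paper does not derive the identity at all: it quotes Proposition 2.8 of \cite{Dib} to get (\ref{eq:generating fnc of Laguerre}) for real $\alpha>\frac{n}{r}-1$, notes that the right-hand side is well defined (and analytic) in $\alpha\in\mathbb{C}$, and then reduces the whole proof to an absolute-convergence estimate for the left-hand side so that analytic continuation in $\alpha$ finishes the job. You instead reconstruct the identity inside the paper's own toolkit: expand each $L_{\mathbf{m}}^{(\alpha-\frac{n}{r})}$, interchange the sums, collapse the inner $\mathbf{m}$-sum by (\ref{eq:basic expansion 1}) of Theorem~\ref{thm:spherical Taylor expan lem}, and finish with the Faraut--Koranyi reproducing identity $\int_{K}e^{(ku|w)}\,dk=\sum_{\mathbf{k}}d_{\mathbf{k}}\Phi_{\mathbf{k}}(u)\Phi_{\mathbf{k}}(w)/\bigl(\tfrac{n}{r}\bigr)_{\mathbf{k}}$, which you rightly observe can be extracted from Lemma~\ref{thm:Cauchy kernel of spherical poly} by the $z\mapsto z/\alpha$, $\alpha\to\infty$ degeneration (that identity is not stated in the paper, so this supplementary step, or a citation to [FK], is genuinely needed). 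What your route buys is self-containedness — no appeal to Dib and no continuation argument for the identity itself; what the paper's route buys is brevity, since after the citation only the convergence bound remains, and that bound is needed in either approach.

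Two details in your convergence paragraph need repair, though neither threatens the approach. First, the tools you cite are slightly off: Corollary~\ref{thm:ineq for generalized shifted factorial 2} bounds $|(\alpha)_{\mathbf{m}}|$ but not the ratio $|(\alpha)_{\mathbf{m}}/(\alpha)_{\mathbf{k}}|$, which is what must be controlled (that is Lemma~\ref{thm:ineq for generalized shifted factorial}, valid for $\mathbf{k}\subset\mathbf{m}$); and Lemma~\ref{thm:estimate of shifted Jack1} gives $|\gamma_{\mathbf{k}}(\mathbf{s})|\leq C\prod_{l}(\,\cdot\,)^{N}$ with $C,N$ depending on $\mathbf{k}$, so it cannot be summed over $\mathbf{k}$ as it stands. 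The clean estimate — and it is exactly the one the paper performs in its own proof of this lemma — keeps the ratio bound of Lemma~\ref{thm:ineq for generalized shifted factorial}, uses the positivity $\gamma_{\mathbf{k}}(\mathbf{m}-\rho)\geq0$ from Lemma~\ref{thm:positivity of shifted Jack}, evaluates $\sum_{\mathbf{m}}d_{\mathbf{m}}\frac{(|\alpha|+d(r-1))_{\mathbf{m}}}{(\frac{n}{r})_{\mathbf{m}}}\gamma_{\mathbf{k}}(\mathbf{m}-\rho)\lambda_{1}^{|\mathbf{m}|}$ in closed form via (\ref{eq:basic expansion 1}) at the positive parameter $|\alpha|+d(r-1)$, and then sums the resulting $\mathbf{k}$-series to an exponential; your instinct that the $1/(\alpha)_{\mathbf{k}}$ factor must not be discarded is precisely what this implements. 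Second, your splitting of each term into $\frac{1}{(\alpha)_{\mathbf{k}}}$ times $(\alpha)_{\mathbf{m}}$ divides by $(\alpha)_{\mathbf{k}}$, which vanishes for a discrete set of $\alpha$ (for instance $\alpha=-N$, the very values used later for the Krawtchouk case); you should run the computation for $\alpha$ off this set and conclude by analyticity in $\alpha$, both sides being entire thanks to the locally uniform convergence bound.
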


\begin{proof}
By referring to \cite{Dib} (see Proposition\,2.\,8), (\ref{eq:generating fnc of Laguerre}) holds for $\alpha >\frac{n}{r}-1=d(r-1)$. 
Moreover, the right hand side of (\ref{eq:generating fnc of Laguerre}) is well defined for any $\alpha \in \mathbb{C}$.  
Hence, by analytic continuation, it is sufficient to show the absolute convergence of the left hand side under the assumption. 
%Let $z=u^{'}\sum_{j=1}^{r}a_{j}c_{j} \in \mathcal{D}$ with $u' \in U$, $1>a_{1}\geq \ldots\geq a_{r}\geq 0$. 
By Lemmas\,\ref{thm:ineq for generalized shifted factorial}, \ref{thm:FK,Thm12.1.1} and \ref{thm:Cauchy kernel of spherical poly}, 
\begin{align}
\sum_{\mathbf{m} \in \mathscr{P}}|L_{\mathbf{m}}^{\left(\alpha -\frac{n}{r}\right)}(u)\Phi_{\mathbf{m}}(z)|
&\leq \sum_{\mathbf{m} \in \mathscr{P}}\sum_{\mathbf{k}\subset \mathbf{m}}
\left|d_{\mathbf{m}}\frac{(\alpha)_{\mathbf{m}}}{\left(\frac{n}{r}\right)_{\mathbf{m}}}
\binom{\mathbf{m}}{\mathbf{k}}\frac{(-1)^{\mathbf{k}}}{(\alpha)_{\mathbf{k}}}\Phi_{\mathbf{k}}(u)\right|\Phi_{\mathbf{m}}(a_{1}) \nonumber \\
&\leq \sum_{\mathbf{k} \in \mathscr{P}}d_{\mathbf{k}}\frac{1}{\left(\frac{n}{r}\right)_{\mathbf{k}}}\frac{1}{(|\alpha|+d(r-1))_{\mathbf{k}}}\Phi_{\mathbf{k}}(u) \nonumber \\
{} & \quad \sum_{\mathbf{m} \in \mathscr{P}}d_{\mathbf{m}}\frac{(|\alpha|+d(r-1))_{\mathbf{m}}}{\left(\frac{n}{r}\right)_{\mathbf{m}}}\gamma_{\mathbf{k}}(\mathbf{m}-\rho)\Phi_{\mathbf{m}}(a_{1}) \nonumber \\
&=(1-a_{1})^{-r|\alpha|-dr(r-1)}\sum_{\mathbf{k} \in \mathscr{P}}d_{\mathbf{k}}\frac{1}{\left(\frac{n}{r}\right)_{\mathbf{k}}}\Phi_{\mathbf{k}}\left(\frac{a_{1}}{1-a_{1}}u\right) \nonumber \\
\label{eq:absolute conv for MLP}
&=(1-a_{1})^{-r|\alpha|-dr(r-1)}e^{\frac{a_{1}}{1-a_{1}}\tr{u}}< \infty. 
\end{align}
\end{proof}

% \begin{lem}
% \label{thm:differential equation for Laguerre}
% Let us consider on the tube domain $T_{\Omega}$ the operator
% \begin{equation}

% \end{equation}
% and on the symmetric cone $\Omega$ the operator 
% \begin{equation}
% D_{\alpha}^{3}:=\tr{(-u\nabla_{u}^{2}-\alpha\nabla_{u}+u-\alpha{e})}.
% \end{equation}
%Here, $\tr{(u\nabla_{u}^{2})}\psi(u):=\tr{(u\nabla_{u}^{2}\psi(u))}$
% Then, 
% \begin{equation}
% D_{\alpha}^{3}\psi_{\mathbf{m}}^{(\alpha)}(u)=2|\mathbf{m}|\psi_{\mathbf{m}}^{(\alpha)}(u).
% \end{equation}
% \end{lem}
Let us consider the operators $D_{\alpha}^{(j)}$ for $j=1,2,3$. 
The operator $D_{\alpha}^{(3)}$ is a first order differential operator on the domain $\mathcal{D}$:
\begin{equation}
D_{\alpha}^{(3)}:=2\tr{(w\nabla_{w})}.
\end{equation}
Since this is the Euler operator, 
$$
D_{\alpha}^{(3)}f^{(\alpha)}_{\mathbf{m}}(w)=2|\mathbf{m}|f^{(\alpha)}_{\mathbf{m}}(w).
$$
%Here, $\tr{(w\nabla_{w})}f(w):=\tr{(w\nabla_{w}^{2}\psi(u))}$
The operators $D_{\alpha}^{(2)}$ and $D_{\alpha}^{(1)}$ are respectively defined by 
$C_{\alpha}^{-1}D_{\alpha}^{(2)}=D_{\alpha}^{(3)}C_{\alpha}^{-1}$ and $\mathcal{L}_{\alpha}D_{\alpha}^{(1)}=D_{\alpha}^{(2)}\mathcal{L}_{\alpha}$. 
Hence, $D_{\alpha}^{(2)}F^{(\alpha)}_{\mathbf{m}}(w)=2|\mathbf{m}|F^{(\alpha)}_{\mathbf{m}}(w)$ and 
\begin{equation}
\label{eq:differential equation for Laguerre}
D_{\alpha}^{(1)}\psi_{\mathbf{m}}^{(\alpha)}(u)=2|\mathbf{m}|\psi_{\mathbf{m}}^{(\alpha)}(u).
\end{equation}
Moreover, they have the following expressions.
\begin{align}
% D_{\alpha}^{(2)}&=\tr{((z^{2}-e)\nabla_{z}+\alpha(z-\alpha{e}))}, \\
D_{\alpha}^{(2)}&=\tr{((z^{2}-e)\nabla_{z}+\alpha(z-e))}, \\
D_{\alpha}^{(1)}&=\tr{(-u\nabla_{u}^{2}-\alpha\nabla_{u}+u-\alpha{e})}.
\end{align}
%Here, $\tr{(u\nabla_{u}^{2})}\psi(u):=\tr{(u\nabla_{u}^{2}\psi(u))}$

\begin{lem}
\label{thm:differential Lemma for Laguerre and difference Lemma for Meixner}
{\rm{(1)}}\,
\begin{align}
D_{\alpha}^{(1)}\varphi_{\mathbf{s}}(u)
&=\sum_{j=1}^{r}a_{j}(\mathbf{s})\varphi_{\mathbf{s}+\epsilon_{j}}(u)-r\alpha\varphi_{\mathbf{s}}(u)   \nonumber \\ 
\label{eq:differntial of D3 and spherical fnc}
{} & \quad -\sum_{j=1}^{r}\left(s_{j}+\frac{d}{4}(r-1)\right)\left(s_{j}+\alpha- \frac{d}{4}(r-1)-1\right)a_{j}(-\mathbf{s})\varphi_{\mathbf{s}-\epsilon_{j}}(u).
\end{align}
{\rm{(2)}}\,
\begin{align}
D_{\alpha}^{(1)}\Phi_{\mathbf{x}}(u)
&=\sum_{j=1}^{r}\tilde{a_{j}}(\mathbf{x})\Phi_{\mathbf{x}+\epsilon_{j}}(u)-r\alpha\Phi_{\mathbf{x}}(u)   \nonumber \\ 
\label{eq:differntial of D3 and spherical poly}
{} & \quad -\sum_{j=1}^{r}\left(x_{j}+\frac{d}{2}(r-j)\right)\left(x_{j}+\alpha-1- \frac{d}{2}(j-1)\right)\tilde{a_{j}}(-\mathbf{x})\Phi_{\mathbf{x}-\epsilon_{j}}(u).
\end{align}
Here, 
\begin{equation}
\tilde{a_{j}}(\mathbf{x}):=a_{j}(\mathbf{x}-\rho)=\prod_{k\not=j}\frac{x_{j}-x_{k}-\frac{d}{2}(j-k-1)}{x_{j}-x_{k}-\frac{d}{2}(j-k)}.
\end{equation}
{\rm{(3)}}\,For any $C \in \mathbb{C}$, 
\begin{align}
e^{C\tr{u}}D_{\alpha}^{(1)}e^{-C\tr{u}}\Phi_{\mathbf{x}}(u)
&=(1-C^{2})\sum_{j=1}^{r}\tilde{a_{j}}(\mathbf{x})\Phi_{\mathbf{x}+\epsilon_{j}}(u)   \nonumber \\ 
{} & \quad +\sum_{j=1}^{r}(C(2x_{j}+\alpha)-\alpha)\Phi_{\mathbf{x}}(u) \nonumber \\
\label{eq:conj differntial of D3 and spherical poly}
{} & \quad -\sum_{j=1}^{r}\left(x_{j}+\frac{d}{2}(r-j)\right)\left(x_{j}+\alpha- 1-\frac{d}{2}(j-1)\right)\tilde{a_{j}}(-\mathbf{x})\Phi_{\mathbf{x}-\epsilon_{j}}(u).
\end{align}
\end{lem}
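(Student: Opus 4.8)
The plan is to reduce the whole lemma to the first--order Pieri recurrences of Lemma~\ref{thm:Pieri} together with a single genuinely second--order computation, and then to derive (2) and (3) from (1) by a change of parameter and a gauge conjugation. For (1) I would start from the explicit form $D_{\alpha}^{(1)}=\tr(-u\nabla_{u}^{2})-\alpha\,\tr(\nabla_{u})+\tr(u)-\alpha\,\tr(e)$ and apply it term by term to $\varphi_{\mathbf{s}}$. The multiplication term $\tr(u)\varphi_{\mathbf{s}}$ is exactly (\ref{eq:Pieri 1}), producing the raising sum $\sum_{j}a_{j}(\mathbf{s})\varphi_{\mathbf{s}+\epsilon_{j}}$; the term $-\alpha\,\tr(\nabla)\varphi_{\mathbf{s}}$ is (\ref{eq:Pieri 2}), producing the first factor $\bigl(s_{j}+\tfrac{d}{4}(r-1)\bigr)$ of each lowering coefficient; and $-\alpha\,\tr(e)=-r\alpha$ gives the diagonal term. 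The only nontrivial input is the second--order piece $\tr(u\nabla_{u}^{2})\varphi_{\mathbf{s}}$, which I expect to equal $\sum_{j}\bigl(s_{j}+\tfrac{d}{4}(r-1)\bigr)\bigl(s_{j}-\tfrac{d}{4}(r-1)-1\bigr)a_{j}(-\mathbf{s})\varphi_{\mathbf{s}-\epsilon_{j}}$; adding this to the $-\alpha\,\tr(\nabla)$ contribution collapses the two first factors into $\bigl(s_{j}+\tfrac{d}{4}(r-1)\bigr)\bigl(s_{j}+\alpha-\tfrac{d}{4}(r-1)-1\bigr)$, which is the stated lowering coefficient.

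This second--order identity is the crux and the main obstacle. I would prove it by first establishing a $V$--valued (``vector'') Pieri formula for $\nabla\varphi_{\mathbf{s}}$ in the spirit of \cite{Di} and \cite{FW1}, of which (\ref{eq:Pieri 2}) is the scalar trace, then applying $\nabla$ a second time, contracting against $u$ through the trace, and simplifying with the product rule and the $\nabla(\Delta^{\beta})$ formula of Lemma~\ref{thm:derivation Lemma}. The delicate point is tracking the Harish--Chandra $c$--function ratios $a_{j}$ under a \emph{second} shift $\mathbf{s}\mapsto\mathbf{s}-\epsilon_{j}$; this is where the bookkeeping is heaviest, and where one could alternatively cite a differential recurrence already available in \cite{Di},\cite{FW1}. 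A reassuring sanity check is the one--variable specialization $\varphi_{s}(u)=u^{s}$, where the claimed identity reduces to $u\cdot s(s-1)u^{s-2}=s(s-1)u^{s-1}$, matching $(s_{1})(s_{1}-1)$ with $r=1$.

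Statement (2) is then the specialization $\mathbf{s}=\mathbf{x}-\rho$ of (1), using $\Phi_{\mathbf{x}}=\varphi_{\mathbf{x}-\rho}$ and $\Phi_{\mathbf{x}\pm\epsilon_{j}}=\varphi_{\mathbf{s}\pm\epsilon_{j}}$. Here $a_{j}(\mathbf{s})=\tilde{a}_{j}(\mathbf{x})$ by definition, while the scalar prefactors are rewritten with $\rho_{j}=\tfrac{d}{4}(2j-r-1)$ via the elementary identities $s_{j}+\tfrac{d}{4}(r-1)=x_{j}+\tfrac{d}{2}(r-j)$ and $s_{j}+\alpha-\tfrac{d}{4}(r-1)-1=x_{j}+\alpha-1-\tfrac{d}{2}(j-1)$, converting the coefficients of (1) into those of (2).

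Finally, for (3) I would compute the gauge--conjugated operator $\tilde{D}:=e^{C\tr u}D_{\alpha}^{(1)}e^{-C\tr u}$ directly. Since $\tr$ is linear, $\nabla(\tr u)=e$, so the product rule of Lemma~\ref{thm:derivation Lemma} gives $e^{C\tr u}\nabla(e^{-C\tr u}g)=\nabla g-Cge$ and, iterating, $e^{C\tr u}\nabla^{2}(e^{-C\tr u}g)=\nabla^{2}g-2C\nabla g+C^{2}ge$. Contracting with $-u$ (and using $\tr(uge)=g\,\tr u$), together with the first--order conjugation $-\alpha\,\tr(\nabla-Ce)=-\alpha\tr(\nabla)+\alpha C r$, yields
\[
\tilde{D}=\big[\tr(-u\nabla^{2})-\alpha\tr(\nabla)\big]+2C\,\tr(u\nabla)+(1-C^{2})\tr(u)+(\alpha C r-\alpha r).
\]
The bracketed part reproduces exactly the lowering terms of (2); the multiplication part acquires the factor $1-C^{2}$ from combining $\tr(u)$ with the new $-C^{2}\tr(u)$; and since $\tr(u\nabla)$ is the Euler operator with $\tr(u\nabla)\Phi_{\mathbf{x}}=|\mathbf{x}|\Phi_{\mathbf{x}}$, the diagonal coefficient becomes $2C|\mathbf{x}|+\alpha C r-\alpha r=\sum_{j}\bigl(C(2x_{j}+\alpha)-\alpha\bigr)$. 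Applying $\tilde{D}$ to $\Phi_{\mathbf{x}}$ and invoking (2) and (\ref{eq:Pieri 1}) then gives precisely the claimed expansion, so that once the second--order computation of (1) is secured, (2) and (3) are essentially bookkeeping.
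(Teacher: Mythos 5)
Your parts (2) and (3) are essentially identical to the paper's own proof: (2) is the substitution $\mathbf{s}=\mathbf{x}-\rho$ with the same $\rho$-bookkeeping, and your conjugation computation in (3) (via $e^{C\tr{u}}\nabla_{u}e^{-C\tr{u}}=\nabla_{u}-Ce$, iteration, contraction with $-u$, then feeding in (2) and (\ref{eq:Pieri 1})) is line for line what the paper does. The problem is part (1): your whole argument rests on the identity
\[
\tr{(u\nabla_{u}^{2})}\,\varphi_{\mathbf{s}}(u)=\sum_{j=1}^{r}\left(s_{j}+\tfrac{d}{4}(r-1)\right)\left(s_{j}-\tfrac{d}{4}(r-1)-1\right)a_{j}(-\mathbf{s})\,\varphi_{\mathbf{s}-\epsilon_{j}}(u),
\]
which you only ``expect'' to hold and whose proof you defer to a vector-valued Pieri formula with admittedly heavy bookkeeping. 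This identity is not available among the paper's tools --- the recurrences of Lemma \ref{thm:Pieri} are all of order at most one --- and it is equivalent to assertion (1) itself (take $\alpha=0$ in (1) and subtract (\ref{eq:Pieri 1})), so asserting it amounts to assuming what is to be proved. Note also that $\tr{(u\nabla_{u}^{2})}$ is not a $G$-invariant operator (its symbol $(x|\xi^{2})$ fails $p(gx,\xi)=p(x,g^{*}\xi)$ since $g^{*}$ is not a Jordan automorphism for general $g\in G$), so no Harish-Chandra/eigenvalue shortcut applies; one must genuinely carry out the second-order computation, and this is exactly where signs go wrong: the paper points out that Lemma 3.18 of \cite{FW1}, which is this very computation, is incorrect in sign --- so your fallback of citing \cite{Di}, \cite{FW1} is unsafe, and repairing that error is precisely why the paper re-proves the statement.

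The paper's proof of (1) avoids any second-order formula, using only tools you already have. By definition $\mathcal{L}_{\alpha}D_{\alpha}^{(1)}=D_{\alpha}^{(2)}\mathcal{L}_{\alpha}$, and $D_{\alpha}^{(2)}$ is of first order; by Lemma \ref{thm:int formula} one has $(\mathcal{L}_{\alpha}\varphi_{\mathbf{s}})(z)=2^{r\alpha}\frac{\Gamma_{\Omega}(\mathbf{s}+\alpha+\rho)}{\Gamma_{\Omega}(\alpha)}\varphi_{-\mathbf{s}-\alpha}(z)$. Applying the first-order recurrences (\ref{eq:Pieri 1})--(\ref{eq:Pieri 3}) to $\varphi_{-\mathbf{s}-\alpha}$ on the tube side, rewriting the shifted functions through $\varphi_{-\mathbf{s}-\alpha\pm\epsilon_{j}}=\frac{\Gamma_{\Omega}(\mathbf{s}+\alpha+\rho)}{\Gamma_{\Omega}(\mathbf{s}+\alpha+\rho\pm\epsilon_{j})}\mathcal{L}_{\alpha}(\varphi_{\mathbf{s}\pm\epsilon_{j}})$, and using injectivity of $\mathcal{L}_{\alpha}$ yields (1) with no new analytic input; the collapse of the two lowering factors that you obtain by combining $-\tr{(u\nabla_{u}^{2})}$ with $-\alpha\tr{(\nabla_{u})}$ happens automatically in the Gamma-factor bookkeeping. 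So either supply an actual proof of your displayed second-order identity (your vector-Pieri plan, carried out and sign-checked), or replace the first step of your argument by this intertwining computation; as written, part (1) has a gap at its crux.
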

% Though (\ref{eq:differntial of D3 and spherical fnc}) is a corollary of Lemma\,5.8 in \cite{FW1}, 
% Faraut and Wakayama's lemma is unfortunately incorrect in terms of the sign. 
% Hence, we re-prove it. 
\begin{rmk}
Though (\ref{eq:differntial of D3 and spherical fnc}) is a corollary of Lemma\,5.8 in \cite{FW1} (version.$1$) essentially, 
Faraut and Wakayama's lemma is deleted in their revised version. 
Hence, we re-prove it according to their proof. 
\end{rmk}
\begin{proof}
{\rm{(1)}}\,
The modified Laplace transform of $\varphi_{\mathbf{s}}$ is given by
\begin{align}
(\mathcal{L}_{\alpha}\varphi_{\mathbf{s}})(z)
&=\frac{2^{r\alpha}}{\Gamma_{\Omega}(\alpha)}\int_{\Omega}e^{-(z|u)}\varphi_{\mathbf{s}}(u)\Delta(u)^{\alpha -\frac{n}{r}}\,du %\nonumber \\
%&
=2^{r\alpha}\frac{\Gamma_{\Omega}(\mathbf{s}+\alpha+\rho)}{\Gamma_{\Omega}(\alpha)}\varphi_{-\mathbf{s}-\alpha}(z). \nonumber 
\end{align}
Thus, from the definition of $D_{\alpha}^{(1)}$ and Lemma\,\ref{thm:Pieri},
\begin{align}
\mathcal{L}_{\alpha}(D_{\alpha}^{(1)}\varphi_{\mathbf{s}})(z)
&=D_{\alpha}^{(2)}(\mathcal{L}_{\alpha}\varphi_{\mathbf{s}})(z) \nonumber \\
&=2^{r\alpha}\frac{\Gamma_{\Omega}(\mathbf{s}+\alpha+\rho)}{\Gamma_{\Omega}(\alpha)}D_{\alpha}^{(2)}\varphi_{-\mathbf{s}-\alpha}(z) \nonumber \\
&=\sum_{j=1}^{r}\left(s_{j}+\alpha-\frac{d}{4}(r-1)\right)a_{j}(\mathbf{s}+\alpha)2^{r\alpha}\frac{\Gamma_{\Omega}(\mathbf{s}+\alpha+\rho)}{\Gamma_{\Omega}(\alpha)}\varphi_{-\mathbf{s}-\alpha-\epsilon_{j}}(z) \nonumber \\
{} & \quad -r\alpha2^{r\alpha}\frac{\Gamma_{\Omega}(\mathbf{s}+\alpha+\rho)}{\Gamma_{\Omega}(\alpha)}\varphi_{-\mathbf{s}-\alpha}(z) \nonumber \\
{} & \quad -\sum_{j=1}^{r}\left(s_{j}+\frac{d}{4}(r-1)\right)a_{j}(-\mathbf{s}-\alpha)2^{r\alpha}\frac{\Gamma_{\Omega}(\mathbf{s}+\alpha+\rho)}{\Gamma_{\Omega}(\alpha)}\varphi_{-\mathbf{s}-\alpha+\epsilon_{j}}(z). \nonumber
\end{align}
Since
$$
\varphi_{-\mathbf{s}-\alpha\pm \epsilon_{j}}(z)=\frac{\Gamma_{\Omega}(\mathbf{s}+\alpha+\rho)}{\Gamma_{\Omega}(\mathbf{s}+\alpha+\rho\pm \epsilon_{j})}\mathcal{L}_{\alpha}(\varphi_{\mathbf{s}\pm \epsilon_{j}})(z), 
$$
we have
\begin{align}
D_{\alpha}^{(2)}(\mathcal{L}_{\alpha}\varphi_{\mathbf{s}})(z)
&=\sum_{j=1}^{r}\left(s_{j}+\alpha-\frac{d}{4}(r-1)\right)a_{j}(\mathbf{s})\frac{\Gamma_{\Omega}(\mathbf{s}+\alpha+\rho)}{\Gamma_{\Omega}(\mathbf{s}+\alpha+\rho+\epsilon_{j})}
\mathcal{L}_{\alpha}(\varphi_{\mathbf{s}+\epsilon_{j}})(z) \nonumber \\
{} & \quad -r\alpha(\mathcal{L}_{\alpha}\varphi_{\mathbf{s}})(z) \nonumber \\
{} & \quad -\sum_{j=1}^{r}\left(s_{j}+\frac{d}{4}(r-1)\right)a_{j}(-\mathbf{s})\frac{\Gamma_{\Omega}(\mathbf{s}+\alpha+\rho)}{\Gamma_{\Omega}(\mathbf{s}+\alpha+\rho-\epsilon_{j})}
\mathcal{L}_{\alpha}(\varphi_{\mathbf{s}-\epsilon_{j}})(z) \nonumber \\
&=\mathcal{L}_{\alpha}
\left(\sum_{j=1}^{r}a_{j}(\mathbf{s})\varphi_{\mathbf{s}+\epsilon_{j}}(u)-r\alpha\varphi_{\mathbf{s}}(u) \right.  \nonumber \\ 
{} & \quad \left. -\sum_{j=1}^{r}\left(s_{j}+\frac{d}{4}(r-1)\right)\left(s_{j}+\alpha- \frac{d}{4}(r-1)-1\right)a_{j}(-\mathbf{s})\varphi_{\mathbf{s}-\epsilon_{j}}(u)\right)(z). \nonumber
\end{align}
% \begin{align}
% D_{\alpha}^{(2)}\varphi_{-\mathbf{s}-\alpha}(z)
% &=2^{r\alpha}\frac{\Gamma_{\Omega}(\mathbf{s}+\alpha+\rho)}{\Gamma_{\Omega}(\alpha)}D_{\alpha}^{(3)}\varphi_{-\mathbf{s}-\alpha}(z) \nonumber \\
% &=-\sum_{j=1}^{r}\left(s_{j}+\frac{d}{4}(r-1)\right)a_{j}(-\mathbf{s}-\alpha)\varphi_{-\mathbf{s}-\alpha-\epsilon_{j}}(z)
% {} & \quad -r\alpha\varphi_{-\mathbf{s}-\alpha}(z) \nonumber \\
% {} & \quad +\sum_{j=1}^{r}\left(s_{j}+\alpha-\frac{d}{4}(r-1)\right)a_{j}(\mathbf{s}+\alpha)\varphi_{-\mathbf{s}-\alpha-\epsilon_{j}}(z) \nonumber \\  
% \end{align}

\noindent
{\rm{(2)}}\,
Put $\mathbf{s}=\mathbf{m}-\rho$ in (\ref{eq:differntial of D3 and spherical fnc}).

\noindent
{\rm{(3)}}\,
By
$$
e^{C\tr{u}}\nabla_{u}e^{-C\tr{u}}=-Ce,\,\,\,\,\,e^{C\tr{u}}\tr{(u\nabla_{u}^{2})}e^{-C\tr{u}}=C^{2}\tr{u}
$$
($e$ in $-Ce$ is the unit element of $V$) and the product rule of differentiation, 
% $$
% \tr{(\nabla_{u}(f(u)h(u)))}=\tr{(\nabla_{u}f(u))}h(u)+f(u)\tr{(\nabla_{u}h(u))},
% $$
we remark that 
\begin{align}
e^{C\tr{u}}\tr{(u\nabla_{u}^{2})}e^{-C\tr{u}}\Phi_{\mathbf{x}}(u)
&=\tr{(u\nabla_{u}^{2})}\Phi_{\mathbf{x}}(u) \nonumber \\
{} & \quad +2\tr{(ue^{C\tr{u}}\nabla_{u}(e^{-C\tr{u}})\nabla_{u}(\Phi_{\mathbf{x}}(u)))} \nonumber \\
{} & \quad +e^{C\tr{u}}\Phi_{\mathbf{x}}(u)\tr{(u\nabla_{u}^{2})}e^{-C\tr{u}} \nonumber \\
&=\left\{\tr{(u\nabla_{u}^{2})}+C^{2}\tr{u}-2C|\mathbf{x}|\right\}\Phi_{\mathbf{x}}(u) \nonumber
\end{align}
and 
\begin{align}
e^{C\tr{u}}\tr{(\nabla_{u})}e^{-C\tr{u}}\Phi_{\mathbf{x}}(u)
&=\Phi_{\mathbf{x}}(u)\tr{(e^{C\tr{u}}\nabla_{u}e^{-C\tr{u}})}+\tr{(\nabla_{u})}\Phi_{\mathbf{x}}(u) \nonumber \\
&=-Cr\Phi_{\mathbf{x}}(u)+\tr{(\nabla_{u})}\Phi_{\mathbf{x}}(u). \nonumber
\end{align}
Hence, 
\begin{align}
e^{C\tr{u}}D_{\alpha}^{(1)}e^{-C\tr{u}}\Phi_{\mathbf{x}}(u)
&=D_{\alpha}^{(1)}\Phi_{\mathbf{x}}(u)-C^{2}\tr{u}\Phi_{\mathbf{x}}(u)+C(2|\mathbf{x}|+r\alpha)\Phi_{\mathbf{x}}(u). \nonumber
\end{align}
Therefore, from (\ref{eq:differntial of D3 and spherical poly}) and (\ref{eq:Pieri 1}), %we have the conclusion.
\begin{align}
e^{C\tr{u}}D_{\alpha}^{(1)}e^{-C\tr{u}}\Phi_{\mathbf{x}}(u)
&=\sum_{j=1}^{r}\tilde{a_{j}}(\mathbf{x})\Phi_{\mathbf{x}+\epsilon_{j}}(u)-r\alpha\Phi_{\mathbf{x}}(u)   \nonumber \\ 
{} & \quad -\sum_{j=1}^{r}\left(x_{j}+\frac{d}{2}(r-j)\right)\left(x_{j}+\alpha-1- \frac{d}{2}(j-1)\right)\tilde{a_{j}}(-\mathbf{x})\Phi_{\mathbf{x}-\epsilon_{j}}(u) \nonumber \\
{} & \quad -C^{2}\sum_{j=1}^{r}\tilde{a_{j}}(\mathbf{x})\Phi_{\mathbf{x}+\epsilon_{j}}(u)+C(2|\mathbf{x}|+r\alpha)\Phi_{\mathbf{x}}(u) \nonumber \\
&=(1-C^{2})\sum_{j=1}^{r}\tilde{a_{j}}(\mathbf{x})\Phi_{\mathbf{x}+\epsilon_{j}}(u)   \nonumber \\ 
{} & \quad +\sum_{j=1}^{r}(C(2x_{j}+\alpha)-\alpha)\Phi_{\mathbf{x}}(u) \nonumber \\
{} & \quad -\sum_{j=1}^{r}\left(x_{j}+\frac{d}{2}(r-j)\right)\left(x_{j}+\alpha- 1-\frac{d}{2}(j-1)\right)\tilde{a_{j}}(-\mathbf{x})\Phi_{\mathbf{x}-\epsilon_{j}}(u). \nonumber 
\end{align}
\end{proof}

\section{Multivariate Meixner, Charlier and Krawtchouk polynomials}

In this section, we assume that $\mathbf{m}, \mathbf{n}, \mathbf{x}, \mathbf{y} \in \mathscr{P}, \alpha \in \mathbb{C}, c,a,p \in \mathbb{C}^{\ast}, N \in \mathbb{Z}_{\geq 0}$ and 
$$
z=u_{1}\sum_{j=1}^{r}a_{j}c_{j}, \,\,\,
w=u_{2}\sum_{j=1}^{r}b_{j}c_{j} \in V^{\mathbb{C}},
$$
with $u_{1},u_{2} \in U$, $a_{1}\geq \cdots\geq a_{r}\geq 0$, $b_{1}\geq \cdots\geq b_{r}\geq 0$ unless otherwise specified.

\subsection{Definitions}

\begin{dfn}
\label{thm:def of the MDOP}
\noindent
We define the multivariate Meixner, Charlier and Krawtchouk polynomials as follows.
% \begin{align}
% M_{\mathbf{m}}(\mathbf{x};\alpha ,c):=&\sum_{\mathbf{k}\subset \mathbf{m}}\binom{\mathbf{m}}{\mathbf{k}}\frac{\gamma_{\mathbf{k}}(\mathbf{x}-\rho)}{(\alpha)_{\mathbf{k}}}\left(1-\frac{1}{c}\right)^{|\mathbf{k}|} \\
% =&\sum_{\mathbf{k}\subset \mathbf{m}}d_{\mathbf{k}}\frac{\gamma_{\mathbf{k}}(\mathbf{m}-\rho)\gamma_{\mathbf{k}}(\mathbf{x}-\rho)}{\left(\frac{n}{r}\right)_{\mathbf{k}}(\alpha)_{\mathbf{k}}}\left(1-\frac{1}{c}\right)^{|\mathbf{k}|}, \\
% C_{\mathbf{m}}(\mathbf{x};a):=&\sum_{\mathbf{k}\subset \mathbf{m}}\binom{\mathbf{m}}{\mathbf{k}}\gamma_{\mathbf{k}}(\mathbf{x}-\rho)\left(-\frac{1}{a}\right)^{|\mathbf{k}|} \\
% =&\sum_{\mathbf{k}\subset \mathbf{m}}d_{\mathbf{k}}\frac{\gamma_{\mathbf{k}}(\mathbf{m}-\rho)\gamma_{\mathbf{k}}(\mathbf{x}-\rho)}{\left(\frac{n}{r}\right)_{\mathbf{k}}}\left(-\frac{1}{a}\right)^{|\mathbf{k}|}, \\
% K_{\mathbf{m}}(\mathbf{x};p ,N):=&\sum_{\mathbf{k}\subset \mathbf{m}}\binom{\mathbf{m}}{\mathbf{k}}\frac{\gamma_{\mathbf{k}}(\mathbf{x}-\rho)}{(-N)_{\mathbf{k}}}\left(\frac{1}{p}\right)^{|\mathbf{k}|} \\
% =&\sum_{\mathbf{k}\subset \mathbf{m}}d_{\mathbf{k}}\frac{\gamma_{\mathbf{k}}(\mathbf{m}-\rho)\gamma_{\mathbf{k}}(\mathbf{x}-\rho)}{\left(\frac{n}{r}\right)_{\mathbf{k}}(-N)_{\mathbf{k}}}\left(\frac{1}{p}\right)^{|\mathbf{k}|}\,\,\,(N=(N,\ldots,N) \supset \mathbf{m}).
% \end{align}
\begin{align}
M_{\mathbf{m}}(\mathbf{x};\alpha ,c)
:=&\sum_{\mathbf{k}\subset \mathbf{m}}\frac{1}{d_{\mathbf{k}}}\frac{\left(\frac{n}{r}\right)_{\mathbf{k}}}{(\alpha)_{\mathbf{k}}}\binom{\mathbf{m}}{\mathbf{k}}\binom{\mathbf{x}}{\mathbf{k}}\left(1-\frac{1}{c}\right)^{|\mathbf{k}|} \\
=&\sum_{\mathbf{k}\subset \mathbf{m}}\binom{\mathbf{m}}{\mathbf{k}}\frac{\gamma_{\mathbf{k}}(\mathbf{x}-\rho)}{(\alpha)_{\mathbf{k}}}\left(1-\frac{1}{c}\right)^{|\mathbf{k}|} \\
=&\sum_{\mathbf{k}\subset \mathbf{m}}d_{\mathbf{k}}\frac{\gamma_{\mathbf{k}}(\mathbf{m}-\rho)\gamma_{\mathbf{k}}(\mathbf{x}-\rho)}{\left(\frac{n}{r}\right)_{\mathbf{k}}(\alpha)_{\mathbf{k}}}\left(1-\frac{1}{c}\right)^{|\mathbf{k}|}, \\
C_{\mathbf{m}}(\mathbf{x};a):=&\sum_{\mathbf{k}\subset \mathbf{m}}\frac{1}{d_{\mathbf{k}}}\left(\frac{n}{r}\right)_{\mathbf{k}}\binom{\mathbf{m}}{\mathbf{k}}\binom{\mathbf{x}}{\mathbf{k}}\left(-\frac{1}{a}\right)^{|\mathbf{k}|} \\
=&\sum_{\mathbf{k}\subset \mathbf{m}}\binom{\mathbf{m}}{\mathbf{k}}\gamma_{\mathbf{k}}(\mathbf{x}-\rho)\left(-\frac{1}{a}\right)^{|\mathbf{k}|} \\
=&\sum_{\mathbf{k}\subset \mathbf{m}}d_{\mathbf{k}}\frac{\gamma_{\mathbf{k}}(\mathbf{m}-\rho)\gamma_{\mathbf{k}}(\mathbf{x}-\rho)}{\left(\frac{n}{r}\right)_{\mathbf{k}}}\left(-\frac{1}{a}\right)^{|\mathbf{k}|}, \\
K_{\mathbf{m}}(\mathbf{x};p ,N):=&\sum_{\mathbf{k}\subset \mathbf{m}}\frac{1}{d_{\mathbf{k}}}\frac{\left(\frac{n}{r}\right)_{\mathbf{k}}}{(-N)_{\mathbf{k}}}\binom{\mathbf{m}}{\mathbf{k}}\binom{\mathbf{x}}{\mathbf{k}}\left(\frac{1}{p}\right)^{|\mathbf{k}|}\,\,\,(\mathbf{m} \subset N=(N,\ldots,N)) \\
=&\sum_{\mathbf{k}\subset \mathbf{m}}\binom{\mathbf{m}}{\mathbf{k}}\frac{\gamma_{\mathbf{k}}(\mathbf{x}-\rho)}{(-N)_{\mathbf{k}}}\left(\frac{1}{p}\right)^{|\mathbf{k}|} \\
=&\sum_{\mathbf{k}\subset \mathbf{m}}d_{\mathbf{k}}\frac{\gamma_{\mathbf{k}}(\mathbf{m}-\rho)\gamma_{\mathbf{k}}(\mathbf{x}-\rho)}{\left(\frac{n}{r}\right)_{\mathbf{k}}(-N)_{\mathbf{k}}}\left(\frac{1}{p}\right)^{|\mathbf{k}|}.
\end{align}
\end{dfn}
When $r=1$, these polynomials become the usual Meixner, Charlier and Krawtchouk polynomials. 
% \begin{align}
% M_{m}(x;\alpha ,c)&=\sum_{k=0}^{m}\frac{1}{(\alpha)_{k}}\binom{m}{k}\binom{x}{k}\left(1-\frac{1}{c}\right)^{k}
% ={_{2}F_1}\left(\begin{matrix}-m,-x\\ \alpha \end{matrix};1-\frac{1}{c}\right), \\
% C_{m}(x;a)&=\sum_{k=0}^{m}\binom{m}{k}\binom{x}{k}\left(1-\frac{1}{c}\right)^{k}
% ={_{2}F_0}\left(\begin{matrix}-m,-x\\ {} \end{matrix};-\frac{1}{a}\right), \\
% K_{m}(x;p,N)&=\sum_{k=0}^{m}\frac{1}{(-N)_{k}}\binom{m}{k}\binom{x}{k}\left(\frac{1}{p}\right)^{k}={_{2}F_1}\left(\begin{matrix}-m,-x\\ -N \end{matrix};\frac{1}{p}\right).
% \end{align}
By the definition, we immediately obtain a duality property for these polynomials. 
\begin{prop}
\label{thm:duality of MDOP}
{\rm{(1)}}\,For all $\mathbf{m}, \mathbf{x} \in \mathscr{P}$, we have
\begin{equation}
M_{\mathbf{m}}(\mathbf{x};\alpha ,c)=M_{\mathbf{x}}(\mathbf{m};\alpha ,c).
\end{equation}
{\rm{(2)}}\,For all $\mathbf{m}, \mathbf{x} \in \mathscr{P}$, we have
\begin{equation}
C_{\mathbf{m}}(\mathbf{x};a)=C_{\mathbf{x}}(\mathbf{m};a).
\end{equation}
% {\rm{(3)}}\,For all $N \supset \mathbf{m}, \mathbf{x} \in \mathscr{P}$, we have
{\rm{(3)}}\,For all $\mathbf{m}, \mathbf{x} \subset N$, we have
\begin{equation}
\label{eq:duality of MK}
K_{\mathbf{m}}(\mathbf{x};p ,N)=K_{\mathbf{x}}(\mathbf{m};p ,N).
\end{equation}
\end{prop}
We only remark the proof of (\ref{eq:duality of MK}). Since $\mathbf{m}, \mathbf{x} \subset N$, 
\begin{align}
K_{\mathbf{m}}(\mathbf{x};p ,N)&=\sum_{\mathbf{k}\subset \mathbf{m}}\frac{1}{d_{\mathbf{k}}}\frac{\left(\frac{n}{r}\right)_{\mathbf{k}}}{(-N)_{\mathbf{k}}}\binom{\mathbf{m}}{\mathbf{k}}\binom{\mathbf{x}}{\mathbf{k}}\left(\frac{1}{p}\right)^{|\mathbf{k}|} \nonumber \\
&=\sum_{\mathbf{k}\subset \mathbf{x}}\frac{1}{d_{\mathbf{k}}}\frac{\left(\frac{n}{r}\right)_{\mathbf{k}}}{(-N)_{\mathbf{k}}}\binom{\mathbf{x}}{\mathbf{k}}\binom{\mathbf{m}}{\mathbf{k}}\left(\frac{1}{p}\right)^{|\mathbf{k}|}=K_{\mathbf{x}}(\mathbf{m};p ,N). \nonumber
\end{align}
We also obtain the following relations by the definitions. 
\begin{prop}
\label{thm:relation of MDOP}
{\rm{(1)}}\,
\begin{equation}
\label{eq:relation Meixner and Krawtchouk}
M_{\mathbf{m}}\left(\mathbf{x};-N ,\frac{p}{p-1}\right)=K_{\mathbf{m}}(\mathbf{x};p ,N).
\end{equation}
{\rm{(2)}}\,
\begin{equation}
\label{eq:relation Meixner and Charlier}
\lim_{\alpha \to \infty}M_{\mathbf{m}}\left(\mathbf{x};\alpha ,\frac{a}{a+\alpha}\right)=C_{\mathbf{m}}(\mathbf{x};a).
\end{equation}
{\rm{(3)}}\,
\begin{equation}
\label{eq:relation Krawtchouk and Charlier}
\lim_{N \to \infty}K_{\mathbf{m}}\left(\mathbf{x};\frac{a}{N} , N\right)=C_{\mathbf{m}}(\mathbf{x};a).
\end{equation}
\end{prop}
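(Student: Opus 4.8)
The plan is to read all three relations straight off the middle series forms in Definition~\ref{thm:def of the MDOP}, namely $M_{\mathbf{m}}(\mathbf{x};\alpha,c)=\sum_{\mathbf{k}\subset\mathbf{m}}\binom{\mathbf{m}}{\mathbf{k}}\frac{\gamma_{\mathbf{k}}(\mathbf{x}-\rho)}{(\alpha)_{\mathbf{k}}}\left(1-\frac{1}{c}\right)^{|\mathbf{k}|}$, $C_{\mathbf{m}}(\mathbf{x};a)=\sum_{\mathbf{k}\subset\mathbf{m}}\binom{\mathbf{m}}{\mathbf{k}}\gamma_{\mathbf{k}}(\mathbf{x}-\rho)\left(-\frac{1}{a}\right)^{|\mathbf{k}|}$, and $K_{\mathbf{m}}(\mathbf{x};p,N)=\sum_{\mathbf{k}\subset\mathbf{m}}\binom{\mathbf{m}}{\mathbf{k}}\frac{\gamma_{\mathbf{k}}(\mathbf{x}-\rho)}{(-N)_{\mathbf{k}}}\left(\frac{1}{p}\right)^{|\mathbf{k}|}$, together with the product formula $(\mathbf{s})_{\mathbf{m}}=\prod_{j=1}^{r}\left(s_{j}-\frac{d}{2}(j-1)\right)_{m_{j}}$. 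Since each sum runs over the finite set $\{\mathbf{k}\subset\mathbf{m}\}$, any limit can be taken term by term, so the whole proposition reduces to checking, for a single $\mathbf{k}$, how the scalar factors $(\alpha)_{\mathbf{k}}^{-1}\left(1-\tfrac1c\right)^{|\mathbf{k}|}$, $(-N)_{\mathbf{k}}^{-1}p^{-|\mathbf{k}|}$ behave under the indicated substitutions. For part~(1) I would simply set $\alpha=-N$ and $c=\frac{p}{p-1}$: then $(\alpha)_{\mathbf{k}}=(-N)_{\mathbf{k}}$ reproduces the denominator of $K_{\mathbf{m}}$, and $1-\frac{1}{c}=1-\frac{p-1}{p}=\frac{1}{p}$ reproduces the base of the power, so the two series agree term by term. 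This part is a pure substitution with no limiting argument.

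For part~(2) I would put $c=\frac{a}{a+\alpha}$, giving $1-\frac{1}{c}=-\frac{\alpha}{a}$, so the general term becomes $\binom{\mathbf{m}}{\mathbf{k}}\gamma_{\mathbf{k}}(\mathbf{x}-\rho)\,\frac{\alpha^{|\mathbf{k}|}}{(\alpha)_{\mathbf{k}}}\left(-\frac{1}{a}\right)^{|\mathbf{k}|}$. By the product formula, $(\alpha)_{\mathbf{k}}=\prod_{j=1}^{r}\left(\alpha-\frac{d}{2}(j-1)\right)_{k_{j}}$ is a polynomial in $\alpha$ of degree $|\mathbf{k}|$ with leading coefficient $1$, so $\alpha^{|\mathbf{k}|}/(\alpha)_{\mathbf{k}}\to 1$ as $\alpha\to\infty$; taking the finite limit term by term leaves exactly $\sum_{\mathbf{k}\subset\mathbf{m}}\binom{\mathbf{m}}{\mathbf{k}}\gamma_{\mathbf{k}}(\mathbf{x}-\rho)\left(-\frac{1}{a}\right)^{|\mathbf{k}|}=C_{\mathbf{m}}(\mathbf{x};a)$.

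For part~(3) I would put $p=\frac{a}{N}$, so $\left(\frac{1}{p}\right)^{|\mathbf{k}|}=\left(\frac{N}{a}\right)^{|\mathbf{k}|}$ and the general term is $\binom{\mathbf{m}}{\mathbf{k}}\gamma_{\mathbf{k}}(\mathbf{x}-\rho)\,\frac{N^{|\mathbf{k}|}}{(-N)_{\mathbf{k}}}\frac{1}{a^{|\mathbf{k}|}}$; here $(-N)_{\mathbf{k}}=\prod_{j=1}^{r}\left(-N-\frac{d}{2}(j-1)\right)_{k_{j}}$ has leading term $(-N)^{|\mathbf{k}|}=(-1)^{|\mathbf{k}|}N^{|\mathbf{k}|}$ in $N$, whence $N^{|\mathbf{k}|}/(-N)_{\mathbf{k}}\to(-1)^{|\mathbf{k}|}$ and each term tends to $\binom{\mathbf{m}}{\mathbf{k}}\gamma_{\mathbf{k}}(\mathbf{x}-\rho)\left(-\frac{1}{a}\right)^{|\mathbf{k}|}$, again recovering $C_{\mathbf{m}}(\mathbf{x};a)$. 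There is essentially no obstacle: every sum is finite and every step is either an elementary identity or a one–variable Pochhammer asymptotic. The only point deserving (routine) care is the justification that the leading behaviour of the generalized shifted factorial in the relevant parameter is $\alpha^{|\mathbf{k}|}$ in part~(2) and $(-N)^{|\mathbf{k}|}$ in part~(3), which is immediate from the explicit product formula quoted above.
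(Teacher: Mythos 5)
Your proposal is correct and follows essentially the same route as the paper: part (1) by direct substitution $\alpha=-N$, $c=\frac{p}{p-1}$ in the definition, and parts (2) and (3) by the term-by-term limits $\lim_{\alpha\to\infty}\alpha^{|\mathbf{k}|}/(\alpha)_{\mathbf{k}}=1$ and $\lim_{N\to\infty}N^{|\mathbf{k}|}/(-N)_{\mathbf{k}}=(-1)^{|\mathbf{k}|}$, which are exactly the two limits the paper records as its key remarks.
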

Actually, {\rm{(1)}} follows from the definitions. 
For {\rm{(2)}} and {\rm{(3)}}, we remark that
\begin{align}
\lim_{\alpha \to \infty}\frac{\alpha^{|\mathbf{k}|}}{(\alpha)_{\mathbf{k}}}
&=\lim_{\alpha \to \infty}\prod_{j=1}^{r}\frac{\alpha^{k_{j}}}{\left(\alpha -\frac{d}{2}(j-1)\right)_{k_{j}}}=1,\nonumber \\
\lim_{N \to \infty}\frac{N^{|\mathbf{k}|}}{(-N)_{\mathbf{k}}}
&=\lim_{N \to \infty}\prod_{j=1}^{r}\frac{N^{k_{j}}}{\left(-N -\frac{d}{2}(j-1)\right)_{k_{j}}}=(-1)^{|\mathbf{k}|}.\nonumber
\end{align}
% \begin{proof}
% {\rm{(1)}}\,It follows from the definitions.

% \noindent
% {\rm{(2)}}\,Since
% $$
% \lim_{\alpha \to \infty}\frac{\alpha^{|\mathbf{k}|}}{(\alpha)_{\mathbf{k}}}
% =\lim_{\alpha \to \infty}\prod_{j=1}^{r}\frac{\alpha^{k_{j}}}{\left(\alpha -\frac{d}{2}(j-1)\right)}=1,
% $$
% we have
% \end{proof}
\subsection{Generating functions}

To present our key lemma which is a summation formula of the above polynomials, we need to prove their convergence.  
\begin{lem}
\label{thm:absolute convergence of gen fnc of gen fnc for Meixner}

\noindent
{\rm{(1)}}\,
If $1>a_{1}\geq \cdots\geq a_{r}\geq 0$, $b_{1}\geq \cdots\geq b_{r}\geq 0$, then 
% $$
% z=u_{1}\sum_{j=1}^{r}a_{j}c_{j} \in \mathcal{D}, \,\,\,\,\,
% w=u_{2}\sum_{j=1}^{r}b_{j}c_{j} \in V^{\mathbb{C}},
% $$
% with $u_{1},u_{2}$ in $U$, 
% then
\begin{equation}
\sum_{\mathbf{x},\mathbf{m} \in \mathscr{P}}\left|d_{\mathbf{m}}\frac{(\alpha)_{\mathbf{m}}}{\left(\frac{n}{r}\right)_{\mathbf{m}}}M_{\mathbf{m}}(\mathbf{x};\alpha ,c)\Phi_{\mathbf{m}}(z)
d_{\mathbf{x}}\frac{1}{\left(\frac{n}{r}\right)_{\mathbf{x}}}\Phi_{\mathbf{x}}(w)\right|
\leq e^{r b_{1}\left(1+\frac{a_{1}}{1-a_{1}}\left(\frac{1}{c}-1\right)\right)}(1-a_{1})^{-r(|\alpha|+2n)}.
\end{equation}
{\rm{(2)}}\,For any $z,w \in V^{\mathbb{C}}$, we have
\begin{equation}
\sum_{\mathbf{x},\mathbf{m} \in \mathscr{P}}\left|d_{\mathbf{m}}\frac{1}{\left(\frac{n}{r}\right)_{\mathbf{m}}}C_{\mathbf{m}}(\mathbf{x},a)\Phi_{\mathbf{m}}(z)
d_{\mathbf{x}}\frac{1}{\left(\frac{n}{r}\right)_{\mathbf{x}}}\Phi_{\mathbf{x}}(w)\right|
\leq e^{r \left(a_{1}+b_{1}+\frac{a_{1}b_{1}}{a}\right)}.
\end{equation}
\end{lem}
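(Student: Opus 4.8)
The plan is to follow the template set by the proof of Lemma~\ref{thm:generating fnc of Laguerre and MP}, taking advantage of the fact that after passing to absolute values every summand is genuinely nonnegative, so the triple series over $\mathbf{m}$, $\mathbf{x}$ and the internal index $\mathbf{k}$ may be rearranged at will by Tonelli. For part (1) I would insert the third expression for $M_{\mathbf{m}}(\mathbf{x};\alpha,c)$ from Definition~\ref{thm:def of the MDOP} into the double sum and move the absolute value inside the $\mathbf{k}$-sum. Three ingredients make this legitimate and every factor estimable: the positivity $\gamma_{\mathbf{k}}(\mathbf{m}-\rho),\,\gamma_{\mathbf{k}}(\mathbf{x}-\rho)\ge 0$ supplied by Lemma~\ref{thm:positivity of shifted Jack}, so that no cancellation is discarded; the spectral bounds $|\Phi_{\mathbf{m}}(z)|\le a_1^{|\mathbf{m}|}$ and $|\Phi_{\mathbf{x}}(w)|\le b_1^{|\mathbf{x}|}$ from Lemma~\ref{thm:FK,Thm12.1.1}; and, most importantly, the ratio estimate $\bigl|(\alpha)_{\mathbf{m}}/(\alpha)_{\mathbf{k}}\bigr|\le(\beta)_{\mathbf{m}}/(\beta)_{\mathbf{k}}$ with $\beta:=|\alpha|+d(r-1)$ from Lemma~\ref{thm:ineq for generalized shifted factorial}, which pairs the leading $(\alpha)_{\mathbf{m}}$ against the $1/(\alpha)_{\mathbf{k}}$ carried inside $M_{\mathbf{m}}$.

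After these substitutions, fixing $\mathbf{k}$ decouples the dependence on $\mathbf{m}$ from that on $\mathbf{x}$ and the bound factorizes. Extending each inner sum to all of $\mathscr{P}$ (legitimate since $\gamma_{\mathbf{k}}(\,\cdot\,-\rho)$ vanishes off $\mathbf{k}\subset\,\cdot\,$), the $\mathbf{m}$-sum is $\sum_{\mathbf{m}}d_{\mathbf{m}}\frac{(\beta)_{\mathbf{m}}}{(n/r)_{\mathbf{m}}}\gamma_{\mathbf{k}}(\mathbf{m}-\rho)\Phi_{\mathbf{m}}(a_1 e)$, which part (1) of Theorem~\ref{thm:spherical Taylor expan lem} evaluated at the scalar point $w=a_1 e$ closes into $(\beta)_{\mathbf{k}}(1-a_1)^{-r\beta}\bigl(\frac{a_1}{1-a_1}\bigr)^{|\mathbf{k}|}$, while the $\mathbf{x}$-sum $\sum_{\mathbf{x}}d_{\mathbf{x}}\frac{1}{(n/r)_{\mathbf{x}}}\gamma_{\mathbf{k}}(\mathbf{x}-\rho)\Phi_{\mathbf{x}}(b_1 e)$ is closed by part (2) of the same theorem into $e^{rb_1}b_1^{|\mathbf{k}|}$. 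The factor $(\beta)_{\mathbf{k}}$ cancels against the $1/(\beta)_{\mathbf{k}}$ coming from the ratio estimate, leaving $(1-a_1)^{-r\beta}e^{rb_1}$ times a single series in $\mathbf{k}$ with argument $t=\frac{a_1 b_1}{1-a_1}\bigl|1-\frac1c\bigr|$; writing $t^{|\mathbf{k}|}=\Phi_{\mathbf{k}}(te)$ and using the exponential expansion $\sum_{\mathbf{k}}d_{\mathbf{k}}\frac{1}{(n/r)_{\mathbf{k}}}\Phi_{\mathbf{k}}(te)=e^{rt}$ collapses it, and the crude comparison $(1-a_1)^{-r\beta}\le(1-a_1)^{-r(|\alpha|+2n)}$ (valid since $0<1-a_1\le1$ and $\beta\le|\alpha|+2n$) yields the stated bound. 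Part (2) is the same computation with the $(\alpha)_{\mathbf{m}}$ prefactor absent, so the ratio estimate is unnecessary and \emph{both} inner sums are closed by part (2) of Theorem~\ref{thm:spherical Taylor expan lem}, producing $e^{ra_1}e^{rb_1}$ times the $\mathbf{k}$-series with argument $\frac{a_1 b_1}{|a|}$, hence $e^{r(a_1+b_1+a_1 b_1/a)}$.

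I expect the only real obstacle to be the bookkeeping around the ratio estimate in part (1): the factor $1/(\alpha)_{\mathbf{k}}$ must not be bounded on its own, since that pushes the inequality the wrong way, so one has to keep $(\alpha)_{\mathbf{m}}$ and $1/(\alpha)_{\mathbf{k}}$ together as the ratio over the chain $\mathbf{k}\subset\mathbf{m}$ before invoking Lemma~\ref{thm:ineq for generalized shifted factorial}; the positivity of the $\gamma_{\mathbf{k}}$'s is what then licenses moving the modulus through all three sums. A minor cosmetic point is that the computation naturally produces $|1-\frac1c|$, which coincides with the stated $\frac1c-1$ precisely in the Meixner regime $0<c<1$, and in general should be read with the absolute value.
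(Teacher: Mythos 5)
Your proof is correct and follows essentially the same route as the paper: expand $M_{\mathbf{m}}$ via the $\gamma_{\mathbf{k}}$-form, push the modulus through using the positivity of Lemma~\ref{thm:positivity of shifted Jack}, bound $\Phi_{\mathbf{m}}(z),\Phi_{\mathbf{x}}(w)$ by Lemma~\ref{thm:FK,Thm12.1.1}, control $(\alpha)_{\mathbf{m}}/(\alpha)_{\mathbf{k}}$ by Lemma~\ref{thm:ineq for generalized shifted factorial}, and close the decoupled $\mathbf{m}$- and $\mathbf{x}$-sums with the two parts of Theorem~\ref{thm:spherical Taylor expan lem} before summing the resulting exponential series in $\mathbf{k}$. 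Your explicit final step passing from the sharp exponent $r(|\alpha|+d(r-1))$ to the stated $r(|\alpha|+2n)$ (and your remark that $\frac1c-1$ should be read as $|1-\frac1c|$ outside $0<c<1$) is a small tightening of bookkeeping that the paper leaves implicit, but it is not a different argument.
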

\begin{proof}
{\rm{(1)}}\,
% By Lemma\,\ref{thm:FK,Thm12.1.1}, for every $\mathbf{m},\mathbf{x} \in \mathscr{P}$ we have
% $$
% |\Phi_{\mathbf{m}}(z)|\leq \Phi_{\mathbf{m}}(a_{1})< 1,\,\,\,\,\,
% |\Phi_{\mathbf{x}}(w)|\leq \Phi_{\mathbf{x}}(b_{1}).
% $$
%Hence, 
By Lemma\,\ref{thm:ineq for generalized shifted factorial}, Lemma\,\ref{thm:FK,Thm12.1.1} and Lemma\,\ref{thm:positivity of shifted Jack}, 
\begin{align}
&\sum_{\mathbf{x},\mathbf{m} \in \mathscr{P}}\left|d_{\mathbf{m}}\frac{(\alpha)_{\mathbf{m}}}{\left(\frac{n}{r}\right)_{\mathbf{m}}}M_{\mathbf{m}}(\mathbf{x};\alpha ,c)\Phi_{\mathbf{m}}(z)
d_{\mathbf{x}}\frac{1}{\left(\frac{n}{r}\right)_{\mathbf{x}}}\Phi_{\mathbf{x}}(w)\right| \nonumber \\
% &\leq \sum_{\mathbf{m}\in \mathscr{P}}d_{\mathbf{m}}\frac{1}{\left(\frac{n}{r}\right)_{\mathbf{m}}}\Phi_{\mathbf{m}}(a_{1})
% \sum_{\mathbf{k}\subset \mathbf{m}}d_{\mathbf{k}}\frac{\gamma_{\mathbf{k}}(\mathbf{m}-\rho)}{\left(\frac{n}{r}\right)_{\mathbf{k}}}\left|\frac{(\alpha)_{\mathbf{m}}}{(\alpha)_{\mathbf{k}}}\right|\left(\frac{1}{c}-1\right)^{|\mathbf{k}|} 
% \sum_{\mathbf{x} \in \mathscr{P}}d_{\mathbf{x}}\frac{1}{\left(\frac{n}{r}\right)_{\mathbf{x}}}\gamma_{\mathbf{k}}(\mathbf{x}-\rho)\Phi_{\mathbf{x}}(b_{1}) \nonumber \\
&\leq \sum_{\mathbf{k}\in \mathscr{P}}d_{\mathbf{k}}\frac{1}{\left(\frac{n}{r}\right)_{\mathbf{k}}(|\alpha|+d(r-1))_{\mathbf{k}}}\left(\frac{1}{c}-1\right)^{|\mathbf{k}|} \nonumber \\
{} & \quad \cdot 
\sum_{\mathbf{m}\in \mathscr{P}}d_{\mathbf{m}}\frac{(|\alpha| +d(r-1))_{\mathbf{m}}}{\left(\frac{n}{r}\right)_{\mathbf{m}}}\gamma_{\mathbf{k}}(\mathbf{m}-\rho)\Phi_{\mathbf{m}}(a_{1}) 
\sum_{\mathbf{x} \in \mathscr{P}}d_{\mathbf{x}}\frac{1}{\left(\frac{n}{r}\right)_{\mathbf{x}}}\gamma_{\mathbf{k}}(\mathbf{x}-\rho)\Phi_{\mathbf{x}}(b_{1}). \nonumber
\end{align}
% Here the first inequality follows from Lemma\,\ref{thm:positivity of shifted Jack} 
% and the second one from Lemma\,\ref{thm:ineq for generalized shifted factorial} and (\ref{eq:basic expansion 2}). 
Moreover, from (\ref{eq:basic expansion 1}) and (\ref{eq:basic expansion 2}) of Theorem\,\ref{thm:spherical Taylor expan lem}, 
\begin{align}
% \sum_{\mathbf{m}\in \mathscr{P}}d_{\mathbf{m}}\frac{(|\alpha| +d(r-1))_{\mathbf{m}}}{\left(\frac{n}{r}\right)_{\mathbf{m}}}\gamma_{\mathbf{k}}(\mathbf{m}-\rho)\Phi_{\mathbf{m}}(a_{1})
% &=(|\alpha| +d(r-1))_{\mathbf{k}}(1-a_{1})^{-r|\alpha| -dr(r-1)}%\Phi_{\mathbf{k}}
% \left(\frac{a_{1}}{1-a_{1}}\right)^{|\mathbf{k}|}, \nonumber \\
\sum_{\mathbf{m}\in \mathscr{P}}d_{\mathbf{m}}\frac{(|\alpha| +d(r-1))_{\mathbf{m}}}{\left(\frac{n}{r}\right)_{\mathbf{m}}}\gamma_{\mathbf{k}}(\mathbf{m}-\rho)\Phi_{\mathbf{m}}(a_{1})
&=(|\alpha| +d(r-1))_{\mathbf{k}}(1-a_{1})^{-r|\alpha| -dr(r-1)-|\mathbf{k}|}%\Phi_{\mathbf{k}}
a_{1}^{|\mathbf{k}|}, \nonumber \\
\sum_{\mathbf{x} \in \mathscr{P}}d_{\mathbf{x}}\frac{1}{\left(\frac{n}{r}\right)_{\mathbf{x}}}\gamma_{\mathbf{k}}(\mathbf{x}-\rho)\Phi_{\mathbf{x}}(b_{1})
&=e^{rb_{1}}%\Phi_{\mathbf{k}}
{b_{1}}^{|\mathbf{k}|}. \nonumber 
\end{align}
Therefore, we have 
\begin{align}
&\sum_{\mathbf{x},\mathbf{m} \in \mathscr{P}}\left|d_{\mathbf{m}}\frac{(\alpha)_{\mathbf{m}}}{\left(\frac{n}{r}\right)_{\mathbf{m}}}M_{\mathbf{m}}(\mathbf{x};\alpha ,c)\Phi_{\mathbf{m}}(z)
d_{\mathbf{x}}\frac{1}{\left(\frac{n}{r}\right)_{\mathbf{x}}}\Phi_{\mathbf{x}}(w)\right| \nonumber \\
&\leq e^{rb_{1}}(1-a_{1})^{-r(|\alpha|+d(r-1))}\sum_{\mathbf{k}\in \mathscr{P}}d_{\mathbf{k}}\frac{1}{\left(\frac{n}{r}\right)_{\mathbf{k}}}\Phi_{\mathbf{k}}\left(\left(\frac{1}{c}-1\right)\frac{a_{1}b_{1}}{1-a_{1}}\right) \nonumber \\
&=e^{r b_{1}\left(1+\frac{a_{1}}{1-a_{1}}\left(\frac{1}{c}-1\right)\right)}(1-a_{1})^{-r(|\alpha|+d(r-1))}<\infty. \nonumber 
\end{align}
%Similarly, the first equality and the second one follow from (\ref{eq:basic expansion 1}) and (\ref{eq:basic expansion 2}) respectively. 

\noindent
{\rm{(2)}}\,By a similar argument, 
\begin{align}
&\sum_{\mathbf{x},\mathbf{m} \in \mathscr{P}}\left|d_{\mathbf{m}}\frac{1}{\left(\frac{n}{r}\right)_{\mathbf{m}}}C_{\mathbf{m}}(\mathbf{x},a)\Phi_{\mathbf{m}}(z)
d_{\mathbf{x}}\frac{1}{\left(\frac{n}{r}\right)_{\mathbf{x}}}\Phi_{\mathbf{x}}(w)\right| \nonumber \\
&\leq \sum_{\mathbf{k}\in \mathscr{P}}d_{\mathbf{k}}\frac{1}{\left(\frac{n}{r}\right)_{\mathbf{k}}}a^{-|\mathbf{k}|}
\sum_{\mathbf{m}\in \mathscr{P}}d_{\mathbf{m}}\frac{1}{\left(\frac{n}{r}\right)_{\mathbf{m}}}\gamma_{\mathbf{k}}(\mathbf{m}-\rho)\Phi_{\mathbf{m}}(a_{1})
\sum_{\mathbf{x} \in \mathscr{P}}d_{\mathbf{x}}\frac{1}{\left(\frac{n}{r}\right)_{\mathbf{x}}}\gamma_{\mathbf{k}}(\mathbf{x}-\rho)\Phi_{\mathbf{x}}(b_{1}) \nonumber \\
&=e^{r(a_{1}+b_{1})}\sum_{\mathbf{k}\in \mathscr{P}}d_{\mathbf{k}}\frac{1}{\left(\frac{n}{r}\right)_{\mathbf{k}}}\Phi_{\mathbf{k}}\left(\frac{a_{1}b_{1}}{a}\right) \nonumber \\
&=e^{r \left(a_{1}+b_{1}+\frac{a_{1}b_{1}}{a}\right)}<\infty. \nonumber
\end{align}
\end{proof}
The following theorem is the key result in our theory.  
\begin{thm}
\label{thm:master generating fnc 1}
{\rm{(1)}}\,
For $z \in \mathcal{D}, w \in V^{\mathbb{C}}$, we obtain
% \begin{align}
% \sum_{\mathbf{x},\mathbf{m} \in \mathscr{P}}d_{\mathbf{m}}\frac{(\alpha)_{\mathbf{m}}}{\left(\frac{n}{r}\right)_{\mathbf{m}}}M_{\mathbf{m}}(\mathbf{x};\alpha ,c)\Phi_{\mathbf{m}}(z)
% d_{\mathbf{x}}\frac{1}{\left(\frac{n}{r}\right)_{\mathbf{x}}}\Phi_{\mathbf{x}}(w)
% &=\sum_{\mathbf{m} \in \mathscr{P}}e^{\tr{z}}L_{\mathbf{m}}^{\left(\alpha -\frac{n}{r}\right)}\left(\left(\frac{1}{c}-1\right)w\right)\Phi_{\mathbf{m}}(z) \nonumber \\
% &=\Delta(e-z)^{-\alpha}\int_{K}e^{(kw|(e-\frac{1}{c}z)(e-z)^{-1})}dk. 
% \end{align}
\begin{align}
\label{eq:master gene fnc 1.1}
\sum_{\mathbf{m} \in \mathscr{P}}e^{\tr{w}}L_{\mathbf{m}}^{\left(\alpha -\frac{n}{r}\right)}\left(\left(\frac{1}{c}-1\right)w\right)\Phi_{\mathbf{m}}(z)
&=\sum_{\mathbf{x},\mathbf{m} \in \mathscr{P}}d_{\mathbf{m}}\frac{(\alpha)_{\mathbf{m}}}{\left(\frac{n}{r}\right)_{\mathbf{m}}}M_{\mathbf{m}}(\mathbf{x};\alpha ,c)\Phi_{\mathbf{m}}(z) \nonumber \\
{} & \quad \cdot d_{\mathbf{x}}\frac{1}{\left(\frac{n}{r}\right)_{\mathbf{x}}}\Phi_{\mathbf{x}}(w) \\
\label{eq:master gene fnc 1.2}
&=\Delta(e-z)^{-\alpha}\int_{K}e^{(kw|(e-\frac{1}{c}z)(e-z)^{-1})}\,dk. 
\end{align}
{\rm{(2)}}\,For $w,z \in V^{\mathbb{C}}$, we obtain
% \begin{align}
% \sum_{\mathbf{x},\mathbf{m} \in \mathscr{P}}d_{\mathbf{m}}\frac{1}{\left(\frac{n}{r}\right)_{\mathbf{m}}}C_{\mathbf{m}}(\mathbf{x};a)\Phi_{\mathbf{m}}(z)
% d_{\mathbf{x}}\frac{1}{\left(\frac{n}{r}\right)_{\mathbf{x}}}\Phi_{\mathbf{x}}(w)
% &=\sum_{\mathbf{m} \in \mathscr{P}}d_{\mathbf{m}}\frac{1}{\left(\frac{n}{r}\right)_{\mathbf{m}}}e^{\tr{w}}\Phi_{\mathbf{m}}\left(e-\frac{1}{a}w\right)\Phi_{\mathbf{m}}(z) \nonumber \\
% &=e^{\tr{(w+z)}}\int_{K}e^{-\frac{1}{a}(kw|z)}dk. 
% \end{align}
\begin{align}
\label{eq:master gene fnc 2.1}
\sum_{\mathbf{m} \in \mathscr{P}}d_{\mathbf{m}}\frac{1}{\left(\frac{n}{r}\right)_{\mathbf{m}}}e^{\tr{w}}\Phi_{\mathbf{m}}\left(e-\frac{1}{a}w\right)\Phi_{\mathbf{m}}(z)
&=\sum_{\mathbf{x},\mathbf{m} \in \mathscr{P}}d_{\mathbf{m}}\frac{1}{\left(\frac{n}{r}\right)_{\mathbf{m}}}C_{\mathbf{m}}(\mathbf{x};a)\Phi_{\mathbf{m}}(z) \nonumber \\
{} & \quad \cdot d_{\mathbf{x}}\frac{1}{\left(\frac{n}{r}\right)_{\mathbf{x}}}\Phi_{\mathbf{x}}(w) \\
\label{eq:master gene fnc 2.2}
&=e^{\tr{(w+z)}}\int_{K}e^{-\frac{1}{a}(kw|z)}\,dk. 
\end{align}
\end{thm}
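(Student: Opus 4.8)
The plan is to prove both parts by the same two-step mechanism: first identify the single generating-function sum on the left with the double discrete-polynomial sum in the middle (a purely combinatorial rearrangement driven by the basic expansion \eqref{eq:basic expansion 2}), and then evaluate that same single sum in closed integral form by invoking a known summation kernel together with the facts that $K$ fixes $e$ and that $\tr$ is $K$-invariant. Every interchange of summation will be licensed by the absolute-convergence estimates of Lemma\,\ref{thm:absolute convergence of gen fnc of gen fnc for Meixner}, whose proof in fact bounds the fully expanded triple series, so that regrouping is unconditionally legitimate on the stated domains.

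For Part (1), to get the equality \eqref{eq:master gene fnc 1.1} I would substitute the definition of $L_{\mathbf{m}}^{(\alpha-\frac{n}{r})}$, use homogeneity $\Phi_{\mathbf{k}}((\tfrac1c-1)w)=(\tfrac1c-1)^{|\mathbf{k}|}\Phi_{\mathbf{k}}(w)$ and the sign collapse $(-1)^{|\mathbf{k}|}(\tfrac1c-1)^{|\mathbf{k}|}=(1-\tfrac1c)^{|\mathbf{k}|}$, and then apply \eqref{eq:basic expansion 2} to each factor $e^{\tr w}\Phi_{\mathbf{k}}(w)$. Multiplying by $\Phi_{\mathbf{m}}(z)$, summing over $\mathbf{m}$, and interchanging the $\mathbf{k}$- and $\mathbf{x}$-summations, the inner $\mathbf{k}$-sum is exactly the third expression for $M_{\mathbf{m}}(\mathbf{x};\alpha,c)$ in Definition\,\ref{thm:def of the MDOP} (the prefactor $d_{\mathbf{m}}\tfrac{(\alpha)_{\mathbf{m}}}{(n/r)_{\mathbf{m}}}$ of the Laguerre polynomial is precisely the weight appearing in the double sum). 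For the integral form \eqref{eq:master gene fnc 1.2} I would factor the scalar $e^{\tr w}$ out of the $\mathbf{m}$-sum, apply Lemma\,\ref{thm:generating fnc of Laguerre and MP} with $u=(\tfrac1c-1)w$, write $e^{\tr w}=e^{\tr(kw)}=e^{(kw|e)}$ under the Haar integral (using $K$-invariance of $\tr$), and combine exponents; the integrand exponent then simplifies by the identity
\[
e-\Bigl(\tfrac1c-1\Bigr)z(e-z)^{-1}=\Bigl(e-\tfrac1c z\Bigr)(e-z)^{-1},
\]
which holds in the commutative associative subalgebra generated by $z$.

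For Part (2) the scheme is identical. For \eqref{eq:master gene fnc 2.1} I would expand $\Phi_{\mathbf{m}}(e-\tfrac1a w)$ by the generalized binomial expansion \eqref{eq:the definition of the generalized binomial coefficients}, use homogeneity together with $\binom{\mathbf{m}}{\mathbf{k}}=d_{\mathbf{k}}\gamma_{\mathbf{k}}(\mathbf{m}-\rho)/(\tfrac{n}{r})_{\mathbf{k}}$, apply \eqref{eq:basic expansion 2} once more, and interchange summations (now justified for all $w,z$ by Lemma\,\ref{thm:absolute convergence of gen fnc of gen fnc for Meixner}\,(2)) to recognize the third expression for $C_{\mathbf{m}}(\mathbf{x};a)$. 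For the integral form \eqref{eq:master gene fnc 2.2} I would pull out the scalar $e^{\tr w}$ and apply the spherical exponential kernel
\[
\int_{K}e^{(kx|y)}\,dk=\sum_{\mathbf{m}\in\mathscr{P}}d_{\mathbf{m}}\frac{1}{\left(\frac{n}{r}\right)_{\mathbf{m}}}\Phi_{\mathbf{m}}(x)\Phi_{\mathbf{m}}(y)
\]
with $x=e-\tfrac1a w$ and $y=z$; since $ke=e$ one has $(k(e-\tfrac1a w)\,|\,z)=\tr z-\tfrac1a(kw|z)$, and the factor $e^{\tr w}$ supplies the remaining exponential, producing $e^{\tr(w+z)}\int_K e^{-\frac1a(kw|z)}\,dk$. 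This kernel is standard (Faraut--Koranyi); alternatively it can be obtained as the $\alpha\to\infty$ degenerate limit of Lemma\,\ref{thm:Cauchy kernel of spherical poly} after rescaling.

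The only genuine obstacle is bookkeeping rather than conceptual. One must ensure that each rearrangement of the (triple) series is absolutely convergent, which is exactly what Lemma\,\ref{thm:absolute convergence of gen fnc of gen fnc for Meixner} guarantees, and one must respect the branch conventions and analytic-continuation steps already fixed in Lemma\,\ref{thm:spherical Taylor expan lem} and Lemma\,\ref{thm:generating fnc of Laguerre and MP}, so that the closed forms remain valid for all admissible $\alpha$, $c$, $a$ and not merely on the subdomain where the defining integrals converge a priori. Once these are in place, both parts reduce to the matching of prefactors and the two elementary Jordan-algebra facts $ke=e$ and $\tr(kw)=\tr w$.
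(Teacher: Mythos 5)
Your proposal is correct, and for part (1) it is essentially the paper's own argument: the paper likewise justifies everything by the absolute-convergence lemma, rewrites $M_{\mathbf{m}}(\mathbf{x};\alpha,c)$ via its $\gamma_{\mathbf{k}}$-expression, collapses the $\mathbf{x}$-sum with the expansion \eqref{eq:basic expansion 2} to reconstruct $e^{\tr w}L_{\mathbf{m}}^{(\alpha-\frac{n}{r})}\bigl(\bigl(\tfrac1c-1\bigr)w\bigr)$, and then cites Lemma\,\ref{thm:generating fnc of Laguerre and MP} for \eqref{eq:master gene fnc 1.2}; you merely run the chain of equalities in the opposite direction, and your identification of the inner $\mathbf{k}$-sum is with the second (equivalently third) expression in Definition\,\ref{thm:def of the MDOP}, a harmless mislabel. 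Where you genuinely diverge is part (2): the paper does not argue directly at all, but obtains \eqref{eq:master gene fnc 2.1}--\eqref{eq:master gene fnc 2.2} as a degenerate limit of part (1), setting $c=\frac{a}{a+\alpha}$, replacing $w$ by $\frac{w}{\alpha}$, and letting $\alpha\to\infty$ (this is how the Charlier case inherits everything from the Meixner case throughout the paper, cf.\ \eqref{eq:relation Meixner and Charlier}). Your direct proof — binomial expansion of $\Phi_{\mathbf{m}}\bigl(e-\tfrac1a w\bigr)$, the expansion \eqref{eq:basic expansion 2}, and the spherical exponential kernel $\int_K e^{(kx|y)}\,dk=\sum_{\mathbf{m}}d_{\mathbf{m}}\Phi_{\mathbf{m}}(x)\Phi_{\mathbf{m}}(y)/\bigl(\tfrac{n}{r}\bigr)_{\mathbf{m}}$ — is equally valid, with convergence covered by Lemma\,\ref{thm:absolute convergence of gen fnc of gen fnc for Meixner}\,(2). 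The trade-off: your route needs one external ingredient (the exponential kernel, standard in Faraut--Koranyi and, as you note, recoverable as a rescaled $\alpha\to\infty$ limit of Lemma\,\ref{thm:Cauchy kernel of spherical poly}), but in exchange it avoids the interchange of limit and infinite summation that the paper's degeneration argument silently assumes and never justifies; in that respect your treatment of (2) is actually the more self-contained of the two.
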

\begin{rmk}
We remark for any $\mathbf{m},\mathbf{x} \in \mathscr{P}$ and $\alpha \in \mathbb{C}$, 
$$
L_{\mathbf{m}}^{\left(\alpha -\frac{n}{r}\right)}(0)
=d_{\mathbf{m}}\frac{(\alpha)_{\mathbf{m}}}{\left(\frac{n}{r}\right)_{\mathbf{m}}}
$$
and $M_{\mathbf{m}}(\mathbf{x};\alpha ,1)=1$. 
Therefore, for $c=1$, (\ref{eq:master gene fnc 1.1}) is trivial and (\ref{eq:master gene fnc 1.2}) degenerates to 
\begin{equation}
\sum_{\mathbf{m} \in \mathscr{P}}d_{\mathbf{m}}\frac{(\alpha)_{\mathbf{m}}}{\left(\frac{n}{r}\right)_{\mathbf{m}}}\Phi_{\mathbf{m}}(z)
\cdot\sum_{\mathbf{x} \in \mathscr{P}}d_{\mathbf{x}}\frac{1}{\left(\frac{n}{r}\right)_{\mathbf{x}}}\Phi_{\mathbf{x}}(w) 
=\Delta(e-z)^{-\alpha}e^{\tr{w}}, \nonumber
\end{equation}
which is well known formula. 
\end{rmk}
\begin{proof}
{\rm{(1)}}\,
By the above lemma, the series converges absolutely under the conditions. Therefore, we derive
\begin{align}
&\sum_{\mathbf{x},\mathbf{m} \in \mathscr{P}}d_{\mathbf{m}}\frac{(\alpha)_{\mathbf{m}}}{\left(\frac{n}{r}\right)_{\mathbf{m}}}M_{\mathbf{m}}(\mathbf{x};\alpha ,c)\Phi_{\mathbf{m}}(z)
d_{\mathbf{x}}\frac{1}{\left(\frac{n}{r}\right)_{\mathbf{x}}}\Phi_{\mathbf{x}}(w) \nonumber \\
&= \sum_{\mathbf{m}\in \mathscr{P}}d_{\mathbf{m}}\frac{(\alpha)_{\mathbf{m}}}{\left(\frac{n}{r}\right)_{\mathbf{m}}}\Phi_{\mathbf{m}}(z)
\sum_{\mathbf{k}\subset \mathbf{m}}\binom{\mathbf{m}}{\mathbf{k}}\frac{1}{(\alpha)_{\mathbf{k}}}\left(1-\frac{1}{c}\right)^{|\mathbf{k}|} 
\sum_{\mathbf{x} \in \mathscr{P}}d_{\mathbf{x}}\frac{1}{\left(\frac{n}{r}\right)_{\mathbf{x}}}\gamma_{\mathbf{k}}(\mathbf{x}-\rho)\Phi_{\mathbf{x}}(w) \nonumber \\
&= \sum_{\mathbf{m}\in \mathscr{P}}e^{\tr{w}}d_{\mathbf{m}}\frac{(\alpha)_{\mathbf{m}}}{\left(\frac{n}{r}\right)_{\mathbf{m}}}
\sum_{\mathbf{k}\subset \mathbf{m}}\binom{\mathbf{m}}{\mathbf{k}}\frac{(-1)^{\mathbf{k}}}{(\alpha)_{\mathbf{k}}}\Phi_{\mathbf{k}}\left(\left(\frac{1}{c}-1\right)w\right)\Phi_{\mathbf{m}}(z) \nonumber \\
&=\sum_{\mathbf{m} \in \mathscr{P}}e^{\tr{w}}L_{\mathbf{m}}^{\left(\alpha -\frac{n}{r}\right)}\left(\left(\frac{1}{c}-1\right)w\right)\Phi_{\mathbf{m}}(z). \nonumber
\end{align}
(\ref{eq:master gene fnc 1.2}) follows from (\ref{eq:generating fnc of Laguerre}).

\noindent
{\rm{(2)}}\,Put $c=\frac{a}{a+\alpha}, w \to \frac{w}{\alpha}, a,\alpha \in \mathbb{R}_{>0}$ in {\rm{(1)}} of Theorem\,\ref{thm:master generating fnc 1} and take the limit of $\alpha \to \infty$. 
\end{proof}

%If we put $c=1$ in 

The generating formulas of our polynomials are a corollary of the above theorem. 
\begin{thm}
\label{thm:generating fnc of MDOP}
{\rm{(1)}}\,
For $z \in \mathcal{D}, \mathbf{x} \in \mathscr{P}$, we have
\begin{equation}
\label{eq:gen fnc of Meixner}
\Delta(e-z)^{-\alpha}\Phi_{\mathbf{x}}\left(\left(e-\frac{1}{c}z\right)(e-z)^{-1}\right)
=\sum_{\mathbf{n} \in \mathscr{P}}d_{\mathbf{n}}\frac{(\alpha)_{\mathbf{n}}}{\left(\frac{n}{r}\right)_{\mathbf{n}}}M_{\mathbf{n}}(\mathbf{x};\alpha ,c)\Phi_{\mathbf{n}}(z). 
\end{equation}
{\rm{(2)}}\,
For $z \in \mathcal{D}, \mathbf{x} \in \mathscr{P}$, we have
\begin{equation}
\label{eq:gen fnc of Charlier}
e^{\tr{z}}\Phi_{\mathbf{x}}\left(e-\frac{1}{a}z\right)=\sum_{\mathbf{n} \in \mathscr{P}}d_{\mathbf{n}}\frac{1}{\left(\frac{n}{r}\right)_{\mathbf{n}}}C_{\mathbf{n}}(\mathbf{x};a)\Phi_{\mathbf{n}}(z).
\end{equation}
{\rm{(3)}}\,
For $z \in \mathcal{D}, \mathbf{x} \subset N$, we have
\begin{equation}
\label{eq:gen fnc of Krawtchouk}
\Delta(e+z)^{N}\Phi_{\mathbf{x}}\left(\left(e-\frac{1-p}{p}z\right)(e+z)^{-1}\right)
=\sum_{\mathbf{n}\subset N}\binom{N}{\mathbf{n}}K_{\mathbf{n}}(\mathbf{x};p,N)\Phi_{\mathbf{n}}(z).
\end{equation}
\end{thm}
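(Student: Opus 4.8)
The plan is to read off all three generating functions directly from the master generating function Theorem~\ref{thm:master generating fnc 1} by comparing, on both sides, the coefficients of the spherical polynomials $\Phi_{\mathbf{x}}(w)$. The guiding observation is that the double sum in the middle of Theorem~\ref{thm:master generating fnc 1}(1) regroups as
\begin{equation}
\sum_{\mathbf{x}\in\mathscr{P}}\left(\sum_{\mathbf{n}\in\mathscr{P}}d_{\mathbf{n}}\frac{(\alpha)_{\mathbf{n}}}{\left(\frac{n}{r}\right)_{\mathbf{n}}}M_{\mathbf{n}}(\mathbf{x};\alpha,c)\Phi_{\mathbf{n}}(z)\right)d_{\mathbf{x}}\frac{1}{\left(\frac{n}{r}\right)_{\mathbf{x}}}\Phi_{\mathbf{x}}(w), \nonumber
\end{equation}
so that the bracketed inner sum is precisely the generating function asserted in (\ref{eq:gen fnc of Meixner}). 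Since $\mathcal{P}(V)^{K}=\bigoplus_{\mathbf{m}}\mathbb{C}\Phi_{\mathbf{m}}$ is a degree-graded basis, two absolutely convergent $K$-invariant expansions in the $\Phi_{\mathbf{x}}(w)$ that agree on a neighborhood of $w=0$ must have equal coefficient functions (compare degree by degree); hence it suffices to expand the closed form (\ref{eq:master gene fnc 1.2}) in the same basis. The required device is the standard expansion $\int_{K}e^{(kw|y)}\,dk=\sum_{\mathbf{x}\in\mathscr{P}}\frac{d_{\mathbf{x}}}{\left(\frac{n}{r}\right)_{\mathbf{x}}}\Phi_{\mathbf{x}}(w)\Phi_{\mathbf{x}}(y)$, the symmetric-cone analogue of $e^{(x|y)}=\sum_{m}(x|y)^{m}/m!$; note that at $y=e$ it reduces to $e^{\tr{w}}=\sum_{\mathbf{x}}\frac{d_{\mathbf{x}}}{\left(\frac{n}{r}\right)_{\mathbf{x}}}\Phi_{\mathbf{x}}(w)$, which is (\ref{eq:basic expansion 2}) at $\mathbf{k}=0$ (see also \cite{FK}).

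For (1), I would apply this expansion with $y=(e-\frac{1}{c}z)(e-z)^{-1}$ to (\ref{eq:master gene fnc 1.2}); the right-hand side becomes $\sum_{\mathbf{x}}\Delta(e-z)^{-\alpha}\Phi_{\mathbf{x}}(y)\,d_{\mathbf{x}}\frac{1}{\left(\frac{n}{r}\right)_{\mathbf{x}}}\Phi_{\mathbf{x}}(w)$, and matching the $\Phi_{\mathbf{x}}(w)$-coefficients against the regrouped middle sum yields (\ref{eq:gen fnc of Meixner}). For (2) the same scheme applies to (\ref{eq:master gene fnc 2.2}); the only extra point is that $e$ is fixed by every $k\in K$, so $(kw|e)=\tr{w}$ and the prefactor absorbs into the integral, $e^{\tr{w}}\int_{K}e^{-\frac{1}{a}(kw|z)}\,dk=\int_{K}e^{(kw|e-\frac{1}{a}z)}\,dk$. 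Expanding as above with $y=e-\frac{1}{a}z$ and comparing coefficients gives (\ref{eq:gen fnc of Charlier}).

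Finally, (3) is not a fresh computation but a specialization of (\ref{eq:gen fnc of Meixner}). Substituting $z\mapsto -z$, $\alpha=-N$, $c=\frac{p}{p-1}$ there, and using $\Phi_{\mathbf{n}}(-z)=(-1)^{|\mathbf{n}|}\Phi_{\mathbf{n}}(z)$ together with $\frac{1}{c}=1-\frac{1}{p}$ (so that $e+\frac{1}{c}z=e-\frac{1-p}{p}z$ and $\Delta(e-z)^{-\alpha}\mapsto\Delta(e+z)^{N}$), the left-hand side matches that of (\ref{eq:gen fnc of Krawtchouk}). On the right one invokes the relation $M_{\mathbf{n}}(\mathbf{x};-N,\frac{p}{p-1})=K_{\mathbf{n}}(\mathbf{x};p,N)$ of Proposition~\ref{thm:relation of MDOP} and the identity
\begin{equation}
d_{\mathbf{n}}\frac{(-N)_{\mathbf{n}}}{\left(\frac{n}{r}\right)_{\mathbf{n}}}(-1)^{|\mathbf{n}|}
=d_{\mathbf{n}}\frac{1}{\left(\frac{n}{r}\right)_{\mathbf{n}}}\gamma_{\mathbf{n}}(N-\rho)=\binom{N}{\mathbf{n}}, \nonumber
\end{equation}
which follows from $\binom{\mathbf{m}}{\mathbf{k}}=d_{\mathbf{k}}\frac{1}{\left(\frac{n}{r}\right)_{\mathbf{k}}}\gamma_{\mathbf{k}}(\mathbf{m}-\rho)$ with $\mathbf{m}=N$ and from (\ref{eq:gamma special case}); the restriction to $\mathbf{n}\subset N$ is automatic since $\binom{N}{\mathbf{n}}=0$ otherwise, and the finite sum makes convergence a non-issue.

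The main obstacle I anticipate is not any single identity but the legitimacy of the coefficient comparison: one must have the closed expansion of $\int_{K}e^{(kw|y)}\,dk$ in the spherical basis with the exact normalization $d_{\mathbf{x}}/\left(\frac{n}{r}\right)_{\mathbf{x}}$, and one must secure absolute, locally uniform convergence of both expansions in $w$ so that the graded linear independence of the $\Phi_{\mathbf{x}}$ can be applied degree by degree. The convergence on the discrete side is exactly what Lemma~\ref{thm:absolute convergence of gen fnc of gen fnc for Meixner} provides, while the estimates underlying Theorem~\ref{thm:spherical Taylor expan lem} control the analytic side; once these are in place the proof is a direct term-by-term identification.
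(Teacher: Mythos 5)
Your proposal is correct and follows essentially the same route as the paper's proof: both arguments rest on Theorem~\ref{thm:master generating fnc 1} and extract the coefficient of $\Phi_{\mathbf{x}}(w)$ from the two expansions, and your part (3) is exactly the paper's specialization $\alpha=-N$, $c=\frac{p}{p-1}$, $z\mapsto -z$ combined with $d_{\mathbf{n}}\frac{(-N)_{\mathbf{n}}}{\left(\frac{n}{r}\right)_{\mathbf{n}}}(-1)^{|\mathbf{n}|}=\binom{N}{\mathbf{n}}$ and Proposition~\ref{thm:relation of MDOP}. The only cosmetic difference is the extraction device: the paper applies the invariant operator $\Phi_{\mathbf{x}}(\partial_{w})$ at $w=0$ under the $K$-integral, whereas you invoke the equivalent kernel expansion $\int_{K}e^{(kw|y)}\,dk=\sum_{\mathbf{x}\in\mathscr{P}}\frac{d_{\mathbf{x}}}{\left(\frac{n}{r}\right)_{\mathbf{x}}}\Phi_{\mathbf{x}}(w)\Phi_{\mathbf{x}}(y)$ together with linear independence of the $\Phi_{\mathbf{x}}$, which amounts to the same spherical Taylor computation.
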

\begin{proof}
{\rm{(1)}}\,We evaluate the spherical Taylor expansion of (\ref{eq:master gene fnc 1.2}) with respect to $w$: 
\begin{align}
\Phi_{\mathbf{x}}(\partial_{w})\Delta(e-z)^{-\alpha}\!\!\int_{K}\!\!e^{(kw|(e-\frac{1}{c}z)(e-z)^{-1})}\,dk\bigg|_{w=0}\!\!
&=\Delta(e-z)^{-\alpha}\!\!\int_{K}\!\!\Phi_{\mathbf{x}}(\partial_{w})e^{(w|k(e-\frac{1}{c}z)(e-z)^{-1})}|_{w=0}\,dk \nonumber \\
&=\Delta(e-z)^{-\alpha}\!\!\int_{K}\!\!\Phi_{\mathbf{x}}\left(\!k\!\left(\!\!\left(e-\frac{1}{c}z\right)\!(e-z)^{-1}\!\right)\!\!\right)\,dk \nonumber \\
&=\Delta(e-z)^{-\alpha}\Phi_{\mathbf{x}}\left(\!\left(e-\frac{1}{c}z\right)(e-z)^{-1}\!\right). \nonumber 
\end{align}
On the other hand, by (\ref{eq:master gene fnc 1.1}),
\begin{align}
\Phi_{\mathbf{x}}(\partial_{w})\Delta(e-z)^{-\alpha}\!\!\int_{K}\!\!e^{(kw|(e-\frac{1}{c}z)(e-z)^{-1})}\,dk\bigg|_{w=0}\!\!
&=\sum_{\mathbf{n} \in \mathscr{P}}d_{\mathbf{n}}\frac{(\alpha)_{\mathbf{n}}}{\left(\frac{n}{r}\right)_{\mathbf{n}}}M_{\mathbf{n}}(\mathbf{x};\alpha ,c)\Phi_{\mathbf{n}}(z). \nonumber 
\end{align}
Therefore, we obtain the conclusion.

\noindent
{\rm{(2)}}\,
The result is proved by a similar argument as in {\rm{(1)}}. 
That is, by {\rm{(2)}} of Theorem\,\ref{thm:master generating fnc 1}, we have 
\begin{align}
\sum_{\mathbf{n} \in \mathscr{P}}d_{\mathbf{n}}\frac{1}{\left(\frac{n}{r}\right)_{\mathbf{n}}}C_{\mathbf{n}}(\mathbf{x};a)\Phi_{\mathbf{n}}(z)
&=\Phi_{\mathbf{x}}(\partial_{w})e^{\tr{(w+z)}}\int_{K}e^{-\frac{1}{a}(kw|z)}\,dk\bigg|_{w=0} \nonumber \\
&=e^{\tr{z}}\int_{K}\!\!\Phi_{\mathbf{x}}(\partial_{w})e^{(w|k(e-\frac{1}{a}z))}|_{w=0}\,dk \nonumber \\
&=e^{\tr{z}}\int_{K}\!\!\Phi_{\mathbf{x}}\left(\!k\!\left(e-\frac{1}{a}z\right)\!\right)\,dk \nonumber \\
&=e^{\tr{z}}\Phi_{\mathbf{x}}\left(e-\frac{1}{a}z\right). \nonumber 
\end{align}

\noindent
% {\rm{(3)}}\,By putting $\alpha =-N$ in (\ref{thm:generating fnc of MDOP}), we have  
{\rm{(3)}}\,From the assumption $\mathbf{x} \subset N$ and  (\ref{eq:gen fnc of Meixner}), we have
\begin{align}
\Delta(e-z)^{N}\Phi_{\mathbf{x}}\!\left(\!\left(e-\frac{1}{c}z\right)(e-z)^{-1}\!\right)
&=\lim_{\alpha \to -N}\Delta(e-z)^{-\alpha }\Phi_{\mathbf{x}}\left(\!\left(e-\frac{1}{c}z\right)(e-z)^{-1}\!\right) \nonumber \\
&=\sum_{\mathbf{n} \in \mathscr{P}}\frac{d_{\mathbf{n}}}{\left(\frac{n}{r}\right)_{\mathbf{n}}}
\sum_{\mathbf{k}\subset \mathbf{n}}\frac{\left(\frac{n}{r}\right)_{\mathbf{k}}}{d_{\mathbf{k}}}
\lim_{\alpha \to -N}\frac{(\alpha)_{\mathbf{n}}}{(\alpha)_{\mathbf{k}}}\binom{\mathbf{n}}{\mathbf{k}}\binom{\mathbf{x}}{\mathbf{k}}\left(1-\frac{1}{c}\right)^{|\mathbf{k}|} \nonumber \\
&=\sum_{\mathbf{n} \subset N}d_{\mathbf{n}}\frac{(-N)_{\mathbf{n}}}{\left(\frac{n}{r}\right)_{\mathbf{n}}}
\sum_{\mathbf{k}\subset \mathbf{x}}\frac{\left(\frac{n}{r}\right)_{\mathbf{k}}}{d_{\mathbf{k}}}
\frac{1}{(-N)_{\mathbf{k}}}\binom{\mathbf{n}}{\mathbf{k}}\binom{\mathbf{x}}{\mathbf{k}}\left(1-\frac{1}{c}\right)^{|\mathbf{k}|} \nonumber \\
&=\sum_{\mathbf{n} \subset N}\binom{N}{\mathbf{n}}M_{\mathbf{n}}(\mathbf{x};-N ,c)\Phi_{\mathbf{n}}(-z). \nonumber
\end{align}
Since this series is a finite sum, we can take $c=\frac{p}{p-1}$ above. Therefore, we obtain
\begin{align}
\Delta(e-z)^{N}\Phi_{\mathbf{x}}\left(\left(e+\frac{1-p}{p}z\right)(e-z)^{-1}\right)
&=\sum_{\mathbf{n} \subset N}\binom{N}{\mathbf{n}}M_{\mathbf{n}}\left(\mathbf{x};-N ,\frac{p}{p-1}\right)\Phi_{\mathbf{n}}(-z) \nonumber \\
&=\sum_{\mathbf{n} \subset N}\binom{N}{\mathbf{n}}K_{\mathbf{n}}\left(\mathbf{x};p,N\right)\Phi_{\mathbf{n}}(-z). \nonumber 
\end{align}
\end{proof}
\begin{rmk}
For $c=-1$, (\ref{eq:gen fnc of Meixner}) was obtained by Davidson-\'{O}lafsson-Zhang \cite{DOZ} (Lemma\,$4.1$) as a generating function of  multivariate Meixner-Pollaczeck polynomials which are called generalized Hermite polynomials in their paper.  
\end{rmk}
Next we apply the unitary transformations in (\ref{eq:unitary picture2}) to Theorem\,\ref{thm:master generating fnc 1}. 
Here, we also check convergence. 
\begin{lem}
\label{thm:absolute convergence of gen fnc of gen fnc for Meixner2}
Fix $0<c<1$ and let $0<\varepsilon  <1$ and $w,z \in \mathcal{D}$ satisfy that %$(c+(1-c)\varepsilon)(1+(1-c)\varepsilon)<1$ and 
\begin{align}
\label{eq:absolute convergence condition of gen fnc of gen fnc for Meixner2}
& \left(c+(1-c)\frac{\varepsilon}{1-\varepsilon}\right)\left(1+(1-c)\frac{\varepsilon}{1-c\varepsilon}\right)<1, \nonumber \\
& |\Phi_{\mathbf{m}}(w)|, |\Phi_{\mathbf{m}}(z)|< \Phi_{\mathbf{m}}(\varepsilon)=\varepsilon^{|\mathbf{m}|}.  
\end{align}
Then, %the following series converges absolutely and that estimate becomes 
\begin{align}
\label{eq:absolute convergence of gen fnc of gen fnc for Meixner2}
&\sum_{\mathbf{x},\mathbf{m},\mathbf{n} \in \mathscr{P}}\left|d_{\mathbf{x}}\frac{(\alpha)_{\mathbf{x}}}{\left(\frac{n}{r}\right)_{\mathbf{x}}}c^{|\mathbf{x}|}
d_{\mathbf{m}}\frac{(\alpha)_{\mathbf{m}}}{\left(\frac{n}{r}\right)_{\mathbf{m}}}M_{\mathbf{m}}(\mathbf{x};\alpha ,c)\Phi_{\mathbf{m}}(z)
d_{\mathbf{n}}\frac{(\alpha)_{\mathbf{n}}}{\left(\frac{n}{r}\right)_{\mathbf{n}}}M_{\mathbf{n}}(\mathbf{x};\alpha ,c)\Phi_{\mathbf{n}}(cw)\right| \nonumber \\
{} & \quad<((1-c)(1-2(1+c)\varepsilon +(4c-1)\varepsilon^{2}))^{-r|\alpha|-dr(r-1)}.
\end{align}
\end{lem}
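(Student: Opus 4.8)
The plan is to pass to absolute values throughout, reducing the triple sum to a product of scalar geometric series each of which can be collapsed by the spherical Taylor expansion (\ref{eq:basic expansion 1}) of Theorem\,\ref{thm:spherical Taylor expan lem}. Throughout write $\beta:=|\alpha|+d(r-1)$, so the claimed exponent is $-r\beta=-r|\alpha|-dr(r-1)$. First I would expand both Meixner factors by Definition\,\ref{thm:def of the MDOP}, writing $M_{\mathbf{m}}(\mathbf{x};\alpha,c)=\sum_{\mathbf{k}\subset\mathbf{m}}\binom{\mathbf{m}}{\mathbf{k}}\gamma_{\mathbf{k}}(\mathbf{x}-\rho)(\alpha)_{\mathbf{k}}^{-1}(1-\tfrac1c)^{|\mathbf{k}|}$ and likewise $M_{\mathbf{n}}$ with an index $\mathbf{l}\subset\mathbf{n}$. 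Using $\binom{\mathbf{m}}{\mathbf{k}}=d_{\mathbf{k}}\gamma_{\mathbf{k}}(\mathbf{m}-\rho)(\tfrac{n}{r})_{\mathbf{k}}^{-1}$, the factor $(\alpha)_{\mathbf{m}}$ sitting in front of $M_{\mathbf{m}}$ pairs with the $(\alpha)_{\mathbf{k}}^{-1}$ inside it; since $\gamma_{\mathbf{k}}(\mathbf{x}-\rho),\gamma_{\mathbf{k}}(\mathbf{m}-\rho)\ge0$ by Lemma\,\ref{thm:positivity of shifted Jack} and $0<c<1$ fixes the sign of $1-\tfrac1c$, taking absolute values only touches the shifted factorials. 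There I would apply Lemma\,\ref{thm:ineq for generalized shifted factorial} to get $|(\alpha)_{\mathbf{m}}/(\alpha)_{\mathbf{k}}|\le(\beta)_{\mathbf{m}}/(\beta)_{\mathbf{k}}$ (and the analogue for $\mathbf{n},\mathbf{l}$) and Corollary\,\ref{thm:ineq for generalized shifted factorial 2} to get $|(\alpha)_{\mathbf{x}}|\le(\beta)_{\mathbf{x}}$, while hypothesis (\ref{eq:absolute convergence condition of gen fnc of gen fnc for Meixner2}) supplies $|\Phi_{\mathbf{m}}(z)|\le\varepsilon^{|\mathbf{m}|}$ and $|\Phi_{\mathbf{n}}(cw)|\le(c\varepsilon)^{|\mathbf{n}|}$.

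With every term now nonnegative, Tonelli permits reordering the five indices $\mathbf{x},\mathbf{m},\mathbf{n},\mathbf{k},\mathbf{l}$, and I would evaluate the sums in three waves. Summing over $\mathbf{m}$ for fixed $\mathbf{k}$ (the constraint $\mathbf{k}\subset\mathbf{m}$ being automatic since $\gamma_{\mathbf{k}}(\mathbf{m}-\rho)=0$ otherwise) by (\ref{eq:basic expansion 1}) at the scalar point $w=\varepsilon e$ gives $\sum_{\mathbf{m}}d_{\mathbf{m}}(\beta)_{\mathbf{m}}(\tfrac{n}{r})_{\mathbf{m}}^{-1}\gamma_{\mathbf{k}}(\mathbf{m}-\rho)\varepsilon^{|\mathbf{m}|}=(\beta)_{\mathbf{k}}(1-\varepsilon)^{-r\beta}(\tfrac{\varepsilon}{1-\varepsilon})^{|\mathbf{k}|}$, the prefactor $(\beta)_{\mathbf{k}}$ cancelling the $(\beta)_{\mathbf{k}}^{-1}$ left over from the ratio bound. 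The $\mathbf{n}$-sum is identical with $\varepsilon$ replaced by $c\varepsilon$, producing a global $(1-c\varepsilon)^{-r\beta}$ and a factor $(\tfrac{c\varepsilon}{1-c\varepsilon})^{|\mathbf{l}|}$.

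The second wave treats the $\mathbf{k}$- and $\mathbf{l}$-sums for fixed $\mathbf{x}$. Since $d_{\mathbf{k}}\gamma_{\mathbf{k}}(\mathbf{x}-\rho)(\tfrac{n}{r})_{\mathbf{k}}^{-1}=\binom{\mathbf{x}}{\mathbf{k}}$, the remaining $\mathbf{k}$-sum is $\sum_{\mathbf{k}}\binom{\mathbf{x}}{\mathbf{k}}t^{|\mathbf{k}|}$ with $t=\tfrac{(1-c)\varepsilon}{c(1-\varepsilon)}$, which by the defining expansion (\ref{eq:the definition of the generalized binomial coefficients}) of the generalized binomial coefficients evaluated at $te$ collapses to $\Phi_{\mathbf{x}}((1+t)e)=(1+t)^{|\mathbf{x}|}$; the $\mathbf{l}$-sum likewise yields $(1+s)^{|\mathbf{x}|}$ with $s=\tfrac{(1-c)\varepsilon}{1-c\varepsilon}$. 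The final $\mathbf{x}$-sum is then $\sum_{\mathbf{x}}d_{\mathbf{x}}(\beta)_{\mathbf{x}}(\tfrac{n}{r})_{\mathbf{x}}^{-1}\mu^{|\mathbf{x}|}$ with $\mu=c(1+t)(1+s)$, which is (\ref{eq:basic expansion 1}) with $\mathbf{k}=\mathbf{0}$ at the point $\mu e$, equal to $(1-\mu)^{-r\beta}$ and finite precisely when $\mu<1$. A direct computation gives $c(1+t)=\tfrac{c+\varepsilon-2c\varepsilon}{1-\varepsilon}$ and $1+s=\tfrac{1+\varepsilon-2c\varepsilon}{1-c\varepsilon}$, so $\mu$ is exactly the product on the left of (\ref{eq:absolute convergence condition of gen fnc of gen fnc for Meixner2}) and $\mu<1$ is the stated hypothesis; multiplying the three collected factors and checking the identity $(1-\varepsilon)(1-c\varepsilon)(1-\mu)=(1-c)(1-2(1+c)\varepsilon+(4c-1)\varepsilon^2)$ produces the asserted bound.

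The main obstacle is conceptual and sits in the $\mathbf{x}$-sum: after the innermost sums it carries the product $\gamma_{\mathbf{k}}(\mathbf{x}-\rho)\gamma_{\mathbf{l}}(\mathbf{x}-\rho)$ of two shifted-Jack eigenvalues, for which there is no convenient closed form if one attempts to sum it against $(\beta)_{\mathbf{x}}(\tfrac{n}{r})_{\mathbf{x}}^{-1}c^{|\mathbf{x}|}$ directly. The resolution is exactly the order of summation above: resumming over $\mathbf{k}$ and $\mathbf{l}$ first converts each $\gamma$-factor into a clean power $(1+t)^{|\mathbf{x}|}$, $(1+s)^{|\mathbf{x}|}$ through the binomial identity, leaving a single scalar geometric $\mathbf{x}$-sum to which (\ref{eq:basic expansion 1}) applies. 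Everything else is the routine bookkeeping of specializing the spherical expansions to scalar multiples of $e$.
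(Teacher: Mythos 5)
Your proof is correct and takes essentially the same route as the paper's: the same preliminary bounds (Lemma \ref{thm:ineq for generalized shifted factorial}, Corollary \ref{thm:ineq for generalized shifted factorial 2}, Lemma \ref{thm:positivity of shifted Jack}, and the hypothesis on $\Phi_{\mathbf{m}}(z)$, $\Phi_{\mathbf{n}}(cw)$), the same order of summation ($\mathbf{m},\mathbf{n}$ collapsed by (\ref{eq:basic expansion 1}), then $\mathbf{k},\mathbf{l}$ collapsed by (\ref{eq:the definition of the generalized binomial coefficients}), then the final $\mathbf{x}$-sum by (\ref{eq:basic expansion 1}) again), and the same closing algebraic identity. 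The only cosmetic difference is that you expand $M_{\mathbf{m}}(\mathbf{x};\alpha,c)$ over $\mathbf{k}\subset\mathbf{m}$ and convert via $\binom{\mathbf{m}}{\mathbf{k}}=d_{\mathbf{k}}\gamma_{\mathbf{k}}(\mathbf{m}-\rho)\left(\tfrac{n}{r}\right)_{\mathbf{k}}^{-1}$, while the paper writes the dual expansion over $\mathbf{k}\subset\mathbf{x}$; by the symmetric third form in Definition \ref{thm:def of the MDOP} these are identical.
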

\begin{proof}
By Lemma\,\ref{thm:ineq for generalized shifted factorial} and Lemma\,\ref{thm:positivity of shifted Jack}, we have
\begin{align}
{\rm{(LHS)}}
&\leq  \sum_{\mathbf{x} \in \mathscr{P}}d_{\mathbf{x}}\frac{(|\alpha|+d(r-1))_{\mathbf{x}}}{\left(\frac{n}{r}\right)_{\mathbf{x}}}c^{|\mathbf{x}|} \nonumber \\
{} & \quad \cdot \sum_{\mathbf{k} \subset \mathbf{x}}\binom{\mathbf{x}}{\mathbf{k}}\frac{1}{(|\alpha|+d(r-1))_{\mathbf{k}}}\left(\frac{1}{c}-1\right)^{|\mathbf{k}|} 
\sum_{\mathbf{l} \subset \mathbf{x}}\binom{\mathbf{x}}{\mathbf{l}}\frac{1}{(|\alpha|+d(r-1))_{\mathbf{l}}}\left(\frac{1}{c}-1\right)^{|\mathbf{l}|} \nonumber \\
{} & \quad \cdot \!\!\sum_{\mathbf{m} \in \mathscr{P}}\!d_{\mathbf{m}}\frac{(|\alpha|+d(r-1))_{\mathbf{m}}}{\left(\frac{n}{r}\right)_{\mathbf{m}}}\gamma_{\mathbf{k}}(\mathbf{m}-\rho)\Phi_{\mathbf{m}}(\varepsilon)
\!\sum_{\mathbf{n} \in \mathscr{P}}d_{\mathbf{n}}\frac{(|\alpha|+d(r-1))_{\mathbf{n}}}{\left(\frac{n}{r}\right)_{\mathbf{n}}}\gamma_{\mathbf{l}}(\mathbf{n}-\rho)\Phi_{\mathbf{n}}(c\varepsilon). \nonumber 
\end{align}
Furthermore, from Lemma\,\ref{thm:spherical Taylor expan lem} and the definition of the generalized binomial coefficients (\ref{eq:the definition of the generalized binomial coefficients}), we derive
\begin{align}
{\rm{(LHS)}}
&\leq  ((1-\varepsilon)(1-c\varepsilon))^{-r|\alpha|-dr(r-1)}\sum_{\mathbf{x} \in \mathscr{P}}d_{\mathbf{x}}\frac{(|\alpha|+d(r-1))_{\mathbf{x}}}{\left(\frac{n}{r}\right)_{\mathbf{x}}}c^{|\mathbf{x}|} \nonumber \\
{} & \quad \cdot \sum_{\mathbf{k} \subset \mathbf{x}}\binom{\mathbf{x}}{\mathbf{k}}\left(\frac{1}{c}-1\right)^{|\mathbf{k}|}\Phi_{\mathbf{k}}\left(\frac{\varepsilon}{1-\varepsilon}\right)  
\sum_{\mathbf{l} \subset \mathbf{x}}\binom{\mathbf{x}}{\mathbf{l}}\left(\frac{1}{c}-1\right)^{|\mathbf{l}|}\Phi_{\mathbf{l}}\left(\frac{c\varepsilon}{1-c\varepsilon}\right) \nonumber \\
% &=((1-\epsilon)(1-c\epsilon))^{-r|\alpha|-dr(r-1)}
% \sum_{\mathbf{x} \in \mathscr{P}}d_{\mathbf{x}}\frac{(|\alpha|+d(r-1))_{\mathbf{x}}}{\left(\frac{n}{r}\right)_{\mathbf{x}}}c^{|\mathbf{x}|} \nonumber \\
% {} & \quad \cdot \sum_{\mathbf{k} \subset \mathbf{x}}\binom{\mathbf{x}}{\mathbf{k}}\Phi_{\mathbf{k}}\left(\left(\frac{1}{c}-1\right)\frac{\epsilon}{1-\epsilon}\right) 
% \sum_{\mathbf{l} \subset \mathbf{x}}\binom{\mathbf{x}}{\mathbf{l}}\Phi_{\mathbf{l}}\left((1-c)\frac{\epsilon}{1-c\epsilon}\right)  \nonumber \\
&=((1-\varepsilon)(1-c\varepsilon))^{-r|\alpha|-dr(r-1)} \nonumber \\
{} & \quad \cdot \sum_{\mathbf{x} \in \mathscr{P}}d_{\mathbf{x}}\frac{(|\alpha|+d(r-1))_{\mathbf{x}}}{\left(\frac{n}{r}\right)_{\mathbf{x}}}
\Phi_{\mathbf{x}}\left(\left(c+(1-c)\frac{\varepsilon}{1-\varepsilon}\right)\left(1+(1-c)\frac{\varepsilon}{1-c\varepsilon}\right)\right). \nonumber %\\
% &=\left((1-\varepsilon)(1-c\varepsilon)
% \left(1-\left(c+(1-c)\frac{\varepsilon}{1-\varepsilon}\right)\left(1+(1-c)\frac{\varepsilon}{1-c\varepsilon}\right)\right)\right)^{-r|\alpha|-dr(r-1)} \nonumber \\
% &=((1-c)(1-2(1+c)\varepsilon +(4c-1)\varepsilon^{2}))^{-r|\alpha|-dr(r-1)}. \nonumber
\end{align}
%We apply Lemma\,\ref{thm:ineq for generalized shifted factorial}, \ref{thm:positivity of shifted Jack}  and  to the left hand side of  (\ref{eq:absolute convergence of gen fnc of gen fnc for Meixner2})
Finally, by using the assumption and Lemma\,\ref{thm:spherical Taylor expan lem}, we obtain
\begin{align}
{\rm{(LHS)}}
&\leq
\left((1-\varepsilon)(1-c\varepsilon)
\left(1-\left(c+(1-c)\frac{\varepsilon}{1-\varepsilon}\right)\left(1+(1-c)\frac{\varepsilon}{1-c\varepsilon}\right)\right)\right)^{-r|\alpha|-dr(r-1)} \nonumber \\
&=((1-c)(1-2(1+c)\varepsilon +(4c-1)\varepsilon^{2}))^{-r|\alpha|-dr(r-1)}. \nonumber
\end{align}
\end{proof}
From this lemma, we can consider the following generating functions.
\begin{thm}
\label{thm:master generating fnc 2}
%{\rm{(1)}}\,For $z \in \mathcal{D}, u \in V^{\mathbb{C}}, \alpha \in \mathbb{C}, 0<c<1$, we obtain
{\rm{(1)}}\,For $z \in \mathcal{D}, u \in V^{\mathbb{C}}$, we obtain
\begin{align}
\label{eq:master generating fnc 2-1}
\sum_{\mathbf{m} \in \mathscr{P}}\psi_{\mathbf{m}}^{(\alpha)}(u)\Phi_{\mathbf{m}}(z)
&=\sum_{\mathbf{x},\mathbf{m} \in \mathscr{P}}d_{\mathbf{m}}\frac{(\alpha)_{\mathbf{m}}}{\left(\frac{n}{r}\right)_{\mathbf{m}}}M_{\mathbf{m}}(\mathbf{x};\alpha ,c)\Phi_{\mathbf{m}}(z) \nonumber \\
{} & \quad \cdot d_{\mathbf{x}}\frac{1}{\left(\frac{n}{r}\right)_{\mathbf{x}}}\left(\frac{2c}{1-c}\right)^{|\mathbf{x}|}e^{-\frac{1+c}{1-c}\tr{u}}\Phi_{\mathbf{x}}(u) \\
&=\Delta(e-z)^{-\alpha}\int_{K}e^{-(ku|(e+z)(e-z)^{-1})}\,dk. \nonumber
\end{align}
{\rm{(2)}}\,Fix $0<c<1$ and assume that $w,z \in \mathcal{D}$ satisfy the condition in {\rm{(1)}} of Lemma\,\ref{thm:absolute convergence of gen fnc of gen fnc for Meixner2}. 
We obtain
\begin{align}
\label{eq:master generating fnc 2-2}
\sum_{\mathbf{m} \in \mathscr{P}}d_{\mathbf{m}}\frac{(\alpha)_{\mathbf{m}}}{\left(\frac{n}{r}\right)_{\mathbf{m}}}\Phi_{\mathbf{m}}(w)\Phi_{\mathbf{m}}(z)
&=(1-c)^{r\alpha}\sum_{\mathbf{x},\mathbf{m} \in \mathscr{P}}d_{\mathbf{m}}\frac{(\alpha)_{\mathbf{m}}}{\left(\frac{n}{r}\right)_{\mathbf{m}}}M_{\mathbf{m}}(\mathbf{x};\alpha ,c)\Phi_{\mathbf{m}}(z) \nonumber \\
{} & \quad \cdot d_{\mathbf{x}}\frac{(\alpha)_{\mathbf{x}}}{\left(\frac{n}{r}\right)_{\mathbf{x}}}c^{|\mathbf{x}|}\Delta(e-cw)^{-\alpha}\Phi_{\mathbf{x}}((e-w)(e-cw)^{-1}) \\
&=\Delta(z)^{-\alpha}\int_{K}\Delta(kz^{-1}-w)^{-\alpha}\,dk. \nonumber
\end{align}
{\rm{(3)}}\,For $w, z \in V^{\mathbb{C}}$, we obtain
\begin{align}
\label{eq:master generating fnc 2-5}
\sum_{\mathbf{m} \in \mathscr{P}}d_{\mathbf{m}}\frac{1}{\left(\frac{n}{r}\right)_{\mathbf{m}}}\Phi_{\mathbf{m}}(w)\Phi_{\mathbf{m}}(z)
&=e^{-ra}\sum_{\mathbf{x},\mathbf{m} \in \mathscr{P}}d_{\mathbf{m}}\frac{a^{|\mathbf{m}|}}{\left(\frac{n}{r}\right)_{\mathbf{m}}}C_{\mathbf{m}}(\mathbf{x};a)\Phi_{\mathbf{m}}(z) \nonumber \\
{} & \quad \cdot d_{\mathbf{x}}\frac{a^{|\mathbf{x}|}}{\left(\frac{n}{r}\right)_{\mathbf{x}}}e^{\tr{w}}\Phi_{\mathbf{x}}\left(e-\frac{1}{a}w\right) \\
&=e^{\tr{w}}\int_{K}e^{-a(kw|e-z)}\,dk. \nonumber
\end{align}
\end{thm}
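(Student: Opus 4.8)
The plan is to obtain all three identities from the two master generating functions of Theorem~\ref{thm:master generating fnc 1} by sending the generating variable through the unitary picture \eqref{eq:unitary picture2}, reading the closed forms from the Laguerre generating function \eqref{eq:generating fnc of Laguerre} and the Cauchy kernel of Lemma~\ref{thm:Cauchy kernel of spherical poly}; the structural point is that $w$ enters Theorem~\ref{thm:master generating fnc 1} only through $\Phi_{\mathbf{x}}(w)$, so specializing or transforming $w$ acts term by term. For part (1) I would prove the lower equality directly: inserting $\psi_{\mathbf{m}}^{(\alpha)}(u)=e^{-\tr u}L_{\mathbf{m}}^{(\alpha-\frac{n}{r})}(2u)$ and applying \eqref{eq:generating fnc of Laguerre} at argument $2u$ gives $e^{-\tr u}\Delta(e-z)^{-\alpha}\int_{K}e^{-(ku|2z(e-z)^{-1})}\,dk$, whereupon $e^{-\tr u}=e^{-(ku|e)}$ together with the Jordan identity $e+2z(e-z)^{-1}=(e+z)(e-z)^{-1}$ yields the last line of \eqref{eq:master generating fnc 2-1}. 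For the upper (Meixner) equality I would specialize Theorem~\ref{thm:master generating fnc 1}(1) at $w=\frac{2c}{1-c}u$ and multiply by the scalar $e^{-\frac{1+c}{1-c}\tr u}$; since $\Phi_{\mathbf{x}}(\frac{2c}{1-c}u)=(\frac{2c}{1-c})^{|\mathbf{x}|}\Phi_{\mathbf{x}}(u)$ the left side becomes the stated double sum, and on the right the exponent collapses via $-\frac{1+c}{1-c}e+\frac{2c}{1-c}(e-\frac{1}{c}z)(e-z)^{-1}=-(e+z)(e-z)^{-1}$. No fresh convergence is required, as this only evaluates an already convergent identity at a point of $V^{\mathbb{C}}$.

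Part (2) is the core. I would apply the unitary map $C_{\alpha}^{-1}\circ\mathcal{L}_{\alpha}$ of \eqref{eq:unitary picture2}, in the variable $u$, to the identity of part (1) viewed as an equality of $K$-invariant functions of $u$. Because this map sends $\psi_{\mathbf{m}}^{(\alpha)}\mapsto f_{\mathbf{m}}^{(\alpha)}=d_{\mathbf{m}}\frac{(\alpha)_{\mathbf{m}}}{(\frac{n}{r})_{\mathbf{m}}}\Phi_{\mathbf{m}}$, the top line of part (1) becomes $\sum_{\mathbf{m}}d_{\mathbf{m}}\frac{(\alpha)_{\mathbf{m}}}{(\frac{n}{r})_{\mathbf{m}}}\Phi_{\mathbf{m}}(w)\Phi_{\mathbf{m}}(z)$, the left side of \eqref{eq:master generating fnc 2-2}, which by Lemma~\ref{thm:Cauchy kernel of spherical poly} equals $\Delta(z)^{-\alpha}\int_{K}\Delta(kz^{-1}-w)^{-\alpha}\,dk$. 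For the middle expression I must evaluate $C_{\alpha}^{-1}\mathcal{L}_{\alpha}[e^{-\frac{1+c}{1-c}\tr u}\Phi_{\mathbf{x}}(u)]$: the integrand $e^{-(z+\frac{1+c}{1-c}e\,|\,u)}\Phi_{\mathbf{x}}(u)\Delta(u)^{\alpha-\frac{n}{r}}$ integrates, by Lemma~\ref{thm:int formula}, to $2^{r\alpha}(\alpha)_{\mathbf{x}}\Delta(z+\frac{1+c}{1-c}e)^{-\alpha}\Phi_{\mathbf{x}}((z+\frac{1+c}{1-c}e)^{-1})$, and the modified Cayley transform then, after $\frac{1+c}{1-c}e+(e+w)(e-w)^{-1}=\frac{2}{1-c}(e-cw)(e-w)^{-1}$ and the multiplicativity of $\Delta$ on the associative subalgebra generated by $w$, produces $(1-c)^{r\alpha}(\alpha)_{\mathbf{x}}(\frac{1-c}{2})^{|\mathbf{x}|}\Delta(e-cw)^{-\alpha}\Phi_{\mathbf{x}}((e-w)(e-cw)^{-1})$; combined with the prefactor $(\frac{2c}{1-c})^{|\mathbf{x}|}$ this gives the $c^{|\mathbf{x}|}$ of \eqref{eq:master generating fnc 2-2}. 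The legitimacy of transforming the infinite sum term by term is exactly what Lemma~\ref{thm:absolute convergence of gen fnc of gen fnc for Meixner2} supplies, and the final formula extends to all admissible $\alpha$ by analytic continuation.

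Part (3) parallels part (1), but starting from the Charlier master function Theorem~\ref{thm:master generating fnc 1}(2): I would specialize it at $z\mapsto az$ and $w\mapsto ae-w$ and multiply by $e^{-ra+\tr w}$. Using $\Phi_{\mathbf{m}}(az)=a^{|\mathbf{m}|}\Phi_{\mathbf{m}}(z)$ and $\Phi_{\mathbf{x}}(ae-w)=a^{|\mathbf{x}|}\Phi_{\mathbf{x}}(e-\frac{1}{a}w)$, the left side turns into the middle double sum of \eqref{eq:master generating fnc 2-5}; on the right, since $(ke|z)=\tr z$, the exponent telescopes and the $K$-integral reduces to the exponential (Fock) reproducing kernel $\int_{K}e^{(kw|z)}\,dk=\sum_{\mathbf{m}}d_{\mathbf{m}}\frac{1}{(\frac{n}{r})_{\mathbf{m}}}\Phi_{\mathbf{m}}(w)\Phi_{\mathbf{m}}(z)$, i.e.\ the left side of the theorem (the same kernel is the $\alpha\to\infty$, $c=\frac{a}{a+\alpha}$ degeneration of the Cauchy kernel in part (2)). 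The main obstacle is the explicit transform computation in part (2) together with its term-by-term justification via Lemma~\ref{thm:absolute convergence of gen fnc of gen fnc for Meixner2}; parts (1) and (3) are by contrast just pointwise specializations of Theorem~\ref{thm:master generating fnc 1} followed by Jordan-algebraic simplification of the $K$-integrals.
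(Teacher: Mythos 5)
Your proposal is correct and takes essentially the same route as the paper: there too, parts (1) and (3) are dismissed as immediate specializations of Theorem~\ref{thm:master generating fnc 1}, and part (2) is proved by applying $C_{\alpha}^{-1}\circ\mathcal{L}_{\alpha}$ to (\ref{eq:master generating fnc 2-1}), computing $C_{\alpha}^{-1}\circ\mathcal{L}_{\alpha}\bigl(e^{-\frac{1+c}{1-c}\tr{u}}\Phi_{\mathbf{x}}\bigr)$ via Lemma~\ref{thm:int formula} and the Cayley substitution exactly as you do, with Lemma~\ref{thm:absolute convergence of gen fnc of gen fnc for Meixner2} justifying the term-by-term exchange. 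The only discrepancy is harmless and in your favor: in part (3) your closed form $\int_{K}e^{(kw|z)}\,dk$ is the correct one, whereas the paper's printed $e^{\tr{w}}\int_{K}e^{-a(kw|e-z)}\,dk$ carries a stray factor $a$ in the exponent (already for $r=1$ it gives $e^{w(1-a+az)}\neq e^{wz}$), so your derivation silently corrects a typo in the stated formula.
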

\begin{proof}
%{\rm{(1)}}\,It follows immediately from {\rm{(1)}} of Theorem\,\ref{thm:master generating fnc 1}.
As {\rm{(1)}} and {\rm{(3)}} follow immediately from Theorem\,\ref{thm:master generating fnc 1}, we only prove {\rm{(2)}}. 

First, we remark that the right hand side of (\ref{eq:master generating fnc 2-2}) converges absolutely under the conditions in Lemma\,\ref{thm:absolute convergence of gen fnc of gen fnc for Meixner2}.  
%Thus, by analytic continuation, it suffices to show these equations when $a_{1}<\frac{1}{3}$. 
Moreover, we also remark that since (\ref{eq:absolute conv for MLP})
$$
\sum_{\mathbf{m} \in \mathscr{P}}|\psi_{\mathbf{m}}^{(\alpha)}(u)\Phi_{\mathbf{m}}(z)|\leq (1-a_{1})^{-r|\alpha|-dr(r-1)}e^{-\frac{1-3a_{1}}{1-a_{1}}}, 
$$
the exchange of unitary transformations $\mathcal{L}_{\alpha}$, $\mathcal{M}_{\alpha,\theta}$ and $\mathcal{F}_{\alpha,\nu}^{-1}$, and the summation are  justified under these restrictions. 
Therefore, to obtain the results, we apply the unitary transforms to both sides of (\ref{eq:master generating fnc 2-1}). 
We will perform these calculations.

%\noindent
For {\rm{(2)}}, 
% By Lemma\,\ref{thm:absolute convergence of gen fnc of gen fnc for Meixner2}, the right hand side of (\ref{eq:master generating fnc 2-2}) converges absolutely under the conditions. 
% Thus, it suffices to show the equation when $a_{1}<\frac{1}{3}$. 
% By similar arguments in Lemma\,\ref{thm:generating fnc of Laguerre and MP}, 
% $$
% \mathcal{L}_{\alpha}\left(\sum_{\mathbf{m} \in \mathscr{P}}|\psi_{\mathbf{m}}^{(\alpha)}(u)\Phi_{\mathbf{m}}(z)|\right)(\acute{z})
% \leq (1-a_{1})^{-r|\alpha|-dr(r-1)}\mathcal{L}_{\alpha}\left(e^{-\frac{1-3a_{1}}{1-a_{1}}}\right)(\acute{z})< \infty. 
% $$
% Hence, the exchange of the integration and summation 
% Therefore, 
we apply transform $C_{\alpha}^{-1}\circ \mathcal{L}_{\alpha}$ to both sides of (\ref{eq:master generating fnc 2-1}). 
From Lemma\,\ref{thm:int formula}, we have 
\begin{align}
\mathcal{L}_{\alpha}(e^{-\frac{1+c}{1-c}\tr{u}}\Phi_{\mathbf{x}})(z)
&=\frac{2^{r\alpha}}{\Gamma_{\Omega}(\alpha)}\int_{\Omega}e^{-\left(\frac{1+c}{1-c}e+z\big|u\right)}\Phi_{\mathbf{x}}(u)\Delta(u)^{\alpha -\frac{n}{r}}\,du \nonumber \\
&=2^{r\alpha}(\alpha)_{\mathbf{x}}\Delta\left(\frac{1+c}{1-c}e+z\right)^{-\alpha}\Phi_{\mathbf{x}}\left(\left(\frac{1+c}{1-c}e+z\right)^{-1}\right). \nonumber 
\end{align}
Furthermore, 
\begin{align}
C_{\alpha}^{-1}\circ \mathcal{L}_{\alpha}(e^{-\frac{1+c}{1-c}\tr{u}}\Phi_{\mathbf{x}})(w)
&=2^{r\alpha}(\alpha)_{\mathbf{x}}C_{\alpha}^{-1}\left(\Delta\left(\frac{1+c}{1-c}e+z\right)^{-\alpha}\Phi_{\mathbf{x}}\left(\left(\frac{1+c}{1-c}e+z\right)^{-1}\right)\right)(w) \nonumber \\
&=(\alpha)_{\mathbf{x}}\Delta(e-w)^{-\alpha}\Delta\left(\frac{1}{2}\left((e+w)(e-w)^{-1}+\frac{1+c}{1-c}e\right)\right)^{-\alpha} \nonumber \\
{} & \quad \cdot \Phi_{\mathbf{x}}\left(\left((e+w)(e-w)^{-1}+\frac{1+c}{1-c}e\right)^{-1}\right) \nonumber \\
% &=2^{-|\mathbf{x}|}(\alpha)_{\mathbf{x}}(1-c)^{r\alpha +|\mathbf{x}|}\Delta(e-cw)^{-\alpha}\Phi_{\mathbf{x}}((e-w)(e-cw)^{-1}). \nonumber
&=(\alpha)_{\mathbf{x}}(1-c)^{r\alpha}\Delta(e-cw)^{-\alpha}\Phi_{\mathbf{x}}\left(\frac{1-c}{2}\,(e-w)(e-cw)^{-1}\right). \nonumber
\end{align}
% We remark that For any $w \in \mathcal{D}$ 
% $$
% \frac{1-c}{2}\,(e-w)(e-cw)^{-1} \in \mathcal{D}.
% $$
Hence, the right-hand side of (\ref{eq:master generating fnc 2-1}) becomes the right-hand side of (\ref{eq:master generating fnc 2-2}).
Therefore, since $C_{\alpha}^{-1}\circ \mathcal{L}_{\alpha}(\psi_{\mathbf{m}}^{(\alpha)})(w)=d_{\mathbf{m}}\frac{(\alpha)_{\mathbf{m}}}{\left(\frac{n}{r}\right)_{\mathbf{m}}}\Phi_{\mathbf{m}}(w)$, we obtain the conclusion.
\end{proof}

\subsection{Orthogonality relations}
We provide the orthogonality relations for our discrete orthogonal polynomials as a corollary of Theorem\,\ref{thm:master generating fnc 2}.
\begin{thm}
\label{thm:Orthogonality of MDOP}
{\rm{(1)}}\,
For $\alpha >\frac{n}{r}-1, 0<c<1$, we obtain
\begin{align}
\label{eq:Orthogonality of Meixner}
% &\sum_{\mathbf{x} \in \mathscr{P}}d_{\mathbf{x}}\frac{(\alpha)_{\mathbf{x}}}{\left(\frac{n}{r}\right)_{\mathbf{x}}}c^{|\mathbf{x}|}M_{\mathbf{m}}(\mathbf{x};\alpha ,c)M_{\mathbf{n}}(\mathbf{x};\alpha ,c) \nonumber \\
% &=\frac{c^{-|\mathbf{m}|}}{(1-c)^{r\alpha}}\frac{1}{d_{\mathbf{m}}}\frac{\left(\frac{n}{r}\right)_{\mathbf{m}}}{(\alpha)_{\mathbf{m}}}\delta_{\mathbf{m},\mathbf{n}}.
\sum_{\mathbf{x} \in \mathscr{P}}d_{\mathbf{x}}\frac{(\alpha)_{\mathbf{x}}}{\left(\frac{n}{r}\right)_{\mathbf{x}}}c^{|\mathbf{x}|}M_{\mathbf{m}}(\mathbf{x};\alpha ,c)M_{\mathbf{n}}(\mathbf{x};\alpha ,c)
=\frac{c^{-|\mathbf{m}|}}{(1-c)^{r\alpha}}\frac{1}{d_{\mathbf{m}}}\frac{\left(\frac{n}{r}\right)_{\mathbf{m}}}{(\alpha)_{\mathbf{m}}}\delta_{\mathbf{m},\mathbf{n}}\geq 0.
\end{align}
{\rm{(2)}}\,
For $a >0$, we obtain
\begin{align}
\label{eq:Orthogonality of Charlier}
\sum_{\mathbf{x} \in \mathscr{P}}d_{\mathbf{x}}\frac{a^{|\mathbf{x}|}}{\left(\frac{n}{r}\right)_{\mathbf{x}}}C_{\mathbf{m}}(\mathbf{x};a)C_{\mathbf{n}}(\mathbf{x};a)
=a^{-|\mathbf{m}|}e^{ra}\frac{\left(\frac{n}{r}\right)_{\mathbf{m}}}{d_{\mathbf{m}}}\delta_{\mathbf{m},\mathbf{n}}\geq 0.
\end{align}
{\rm{(3)}}\,
For $0<p<1$, we obtain
\begin{equation}
\label{eq;Orthogonality of Krawtchouk}
\sum_{\mathbf{x}\subset N}\binom{N}{\mathbf{x}}p^{|\mathbf{x}|}(1-p)^{rN-|\mathbf{x}|}K_{\mathbf{m}}(\mathbf{x};p,N)K_{\mathbf{n}}(\mathbf{x};p,N)
=\left(\frac{1-p}{p}\right)^{|\mathbf{m}|}\binom{N}{\mathbf{m}}^{-1}\delta_{\mathbf{m},\mathbf{n}}\geq 0.
\end{equation}
\end{thm}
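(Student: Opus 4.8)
For {\rm(1)} the plan is to feed the Meixner generating function (\ref{eq:gen fnc of Meixner}) into the master identity (\ref{eq:master generating fnc 2-2}) of \Thm{master generating fnc 2} and then match coefficients. The key observation is that the factor $\Delta(e-cw)^{-\alpha}\Phi_{\mathbf{x}}\left((e-w)(e-cw)^{-1}\right)$ on the right-hand side of (\ref{eq:master generating fnc 2-2}) is exactly (\ref{eq:gen fnc of Meixner}) with $z$ replaced by $cw$, hence equals $\sum_{\mathbf{n}\in\mathscr{P}}d_{\mathbf{n}}\frac{(\alpha)_{\mathbf{n}}}{\left(\frac{n}{r}\right)_{\mathbf{n}}}M_{\mathbf{n}}(\mathbf{x};\alpha,c)\,c^{|\mathbf{n}|}\Phi_{\mathbf{n}}(w)$ after using the homogeneity $\Phi_{\mathbf{n}}(cw)=c^{|\mathbf{n}|}\Phi_{\mathbf{n}}(w)$. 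Substituting this back turns the right-hand side of (\ref{eq:master generating fnc 2-2}) into a triple series in $\mathbf{m},\mathbf{n},\mathbf{x}$, and \Lem{absolute convergence of gen fnc of gen fnc for Meixner2} legitimizes reordering it so that the $\mathbf{x}$-summation is innermost. Collecting terms writes this side as $\sum_{\mathbf{m},\mathbf{n}}A_{\mathbf{m},\mathbf{n}}\Phi_{\mathbf{m}}(z)\Phi_{\mathbf{n}}(w)$ with
\begin{equation*}
A_{\mathbf{m},\mathbf{n}}=(1-c)^{r\alpha}\,d_{\mathbf{m}}\frac{(\alpha)_{\mathbf{m}}}{\left(\frac{n}{r}\right)_{\mathbf{m}}}\,d_{\mathbf{n}}\frac{(\alpha)_{\mathbf{n}}}{\left(\frac{n}{r}\right)_{\mathbf{n}}}c^{|\mathbf{n}|}\sum_{\mathbf{x}\in\mathscr{P}}d_{\mathbf{x}}\frac{(\alpha)_{\mathbf{x}}}{\left(\frac{n}{r}\right)_{\mathbf{x}}}c^{|\mathbf{x}|}M_{\mathbf{m}}(\mathbf{x};\alpha,c)M_{\mathbf{n}}(\mathbf{x};\alpha,c),
\end{equation*}
whereas the left-hand side of (\ref{eq:master generating fnc 2-2}) is the diagonal expression $\sum_{\mathbf{m}}d_{\mathbf{m}}\frac{(\alpha)_{\mathbf{m}}}{\left(\frac{n}{r}\right)_{\mathbf{m}}}\Phi_{\mathbf{m}}(z)\Phi_{\mathbf{m}}(w)$.

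Because the spherical polynomials $\Phi_{\mathbf{m}}$ form a basis of $\mathcal{P}(V)^{K}$, the products $\Phi_{\mathbf{m}}(z)\Phi_{\mathbf{n}}(w)$ are linearly independent, so I would equate the two sides coefficient by coefficient: $A_{\mathbf{m},\mathbf{n}}=d_{\mathbf{m}}\frac{(\alpha)_{\mathbf{m}}}{\left(\frac{n}{r}\right)_{\mathbf{m}}}\delta_{\mathbf{m},\mathbf{n}}$. For $\mathbf{m}\ne\mathbf{n}$ the nonvanishing prefactor forces $\sum_{\mathbf{x}}d_{\mathbf{x}}\frac{(\alpha)_{\mathbf{x}}}{\left(\frac{n}{r}\right)_{\mathbf{x}}}c^{|\mathbf{x}|}M_{\mathbf{m}}(\mathbf{x};\alpha,c)M_{\mathbf{n}}(\mathbf{x};\alpha,c)=0$, and for $\mathbf{m}=\mathbf{n}$ cancelling the prefactor gives $\sum_{\mathbf{x}}d_{\mathbf{x}}\frac{(\alpha)_{\mathbf{x}}}{\left(\frac{n}{r}\right)_{\mathbf{x}}}c^{|\mathbf{x}|}M_{\mathbf{m}}(\mathbf{x};\alpha,c)^{2}=\frac{c^{-|\mathbf{m}|}}{(1-c)^{r\alpha}}\frac{1}{d_{\mathbf{m}}}\frac{\left(\frac{n}{r}\right)_{\mathbf{m}}}{(\alpha)_{\mathbf{m}}}$, which is the claim. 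The nonnegativity is then manifest, since for $0<c<1$ and $\alpha>\frac{n}{r}-1$ every factor on the right is positive.

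Part {\rm(2)} I would prove by the same scheme applied to (\ref{eq:master generating fnc 2-5}), now recognizing the factor $e^{\tr{w}}\Phi_{\mathbf{x}}\left(e-\frac{1}{a}w\right)$ as the Charlier generating function (\ref{eq:gen fnc of Charlier}) at $w$ and using the Charlier half of \Lem{absolute convergence of gen fnc of gen fnc for Meixner2} for the reordering. Part {\rm(3)} I would obtain from {\rm(1)} by specializing $\alpha=-N$ and $c=\frac{p}{p-1}$, using the relation $M_{\mathbf{m}}\left(\mathbf{x};-N,\frac{p}{p-1}\right)=K_{\mathbf{m}}(\mathbf{x};p,N)$ of (\ref{eq:relation Meixner and Krawtchouk}) and the evaluation $\gamma_{\mathbf{x}}(N-\rho)=(-1)^{|\mathbf{x}|}(-N)_{\mathbf{x}}$ of (\ref{eq:gamma special case}), which together give $d_{\mathbf{x}}\frac{(-N)_{\mathbf{x}}}{\left(\frac{n}{r}\right)_{\mathbf{x}}}=(-1)^{|\mathbf{x}|}\binom{N}{\mathbf{x}}$; feeding these in and multiplying through by $(1-p)^{rN}$ converts the weight and normalization of {\rm(1)} into $\binom{N}{\mathbf{x}}p^{|\mathbf{x}|}(1-p)^{rN-|\mathbf{x}|}$ and $\left(\frac{1-p}{p}\right)^{|\mathbf{m}|}\binom{N}{\mathbf{m}}^{-1}$ respectively.

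The main thing to watch is the legitimacy of the termwise coefficient comparison: two series in the $\Phi_{\mathbf{m}}(z)\Phi_{\mathbf{n}}(w)$ that agree as functions on the common domain of convergence may be identified term by term, and this is precisely what \Lem{absolute convergence of gen fnc of gen fnc for Meixner2} together with the basis property of the spherical polynomials secures. A secondary point is that the Krawtchouk values $\alpha=-N$, $c=\frac{p}{p-1}$ lie outside the range $0<c<1$, $\alpha>\frac{n}{r}-1$ in which {\rm(1)} was established; this is harmless because at $\alpha=-N$ the factor $(\alpha)_{\mathbf{x}}$ kills every term with $\mathbf{x}\not\subset N$, so both sides reduce to finite sums that are rational in $(\alpha,c)$ and the identity propagates to the required values by analytic continuation.
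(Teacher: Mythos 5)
Your proofs of (1) and (2) are essentially the paper's own: substitute the Meixner generating function (\ref{eq:gen fnc of Meixner}) evaluated at $cw$ (with the homogeneity $\Phi_{\mathbf{n}}(cw)=c^{|\mathbf{n}|}\Phi_{\mathbf{n}}(w)$), resp.\ the Charlier generating function (\ref{eq:gen fnc of Charlier}) at $w$, into the right-hand sides of (\ref{eq:master generating fnc 2-2}), resp.\ (\ref{eq:master generating fnc 2-5}), reorder so the $\mathbf{x}$-sum is innermost, and compare coefficients of $\Phi_{\mathbf{m}}(z)\Phi_{\mathbf{n}}(w)$. (One small slip: there is no ``Charlier half'' of Lemma~\ref{thm:absolute convergence of gen fnc of gen fnc for Meixner2}, which is Meixner-only; the Charlier bound, Lemma~\ref{thm:absolute convergence of gen fnc of gen fnc for Meixner}(2), concerns a different double sum --- though the paper is equally silent about justifying this particular reordering.) Where you genuinely diverge is (3). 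The paper does not specialize the finished relation (1); it goes back one step and specializes the master identity (\ref{eq:master generating fnc 2-2}) at $\alpha=-N$, where all sums visibly truncate to partitions contained in $N$, then sets $c=\frac{p}{p-1}$ in what is by then a finite rational identity, and re-runs the coefficient comparison using the Krawtchouk generating function (\ref{eq:gen fnc of Krawtchouk}). You instead push $(\alpha,c)\to\left(-N,\frac{p}{p-1}\right)$ through (1) itself; your bookkeeping ($d_{\mathbf{x}}(-N)_{\mathbf{x}}/\left(\tfrac{n}{r}\right)_{\mathbf{x}}=(-1)^{|\mathbf{x}|}\binom{N}{\mathbf{x}}$ via (\ref{eq:gamma special case}), then multiplying by $(1-p)^{rN}$) is correct and does land exactly on (\ref{eq;Orthogonality of Krawtchouk}). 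Your route is shorter and never needs (\ref{eq:gen fnc of Krawtchouk}); its cost is that the continuation requires more care than your closing sentence gives it. The truncation by $(\alpha)_{\mathbf{x}}$ happens only \emph{exactly} at $\alpha=-N$, so ``both sides reduce to finite sums rational in $(\alpha,c)$'' cannot itself serve as the continuation device; the argument must be split in two stages: first, at fixed $0<c<1$, the infinite $\mathbf{x}$-series in (1) converges locally uniformly in $\alpha$ (the bounds behind Lemma~\ref{thm:ineq for generalized shifted factorial} and Lemma~\ref{thm:absolute convergence of gen fnc of gen fnc for Meixner2} depend only on $|\alpha|$, so this is available), hence both sides are analytic in $\alpha$ away from the poles of $M_{\mathbf{m}}$, $M_{\mathbf{n}}$ --- and $\alpha=-N$ is not such a pole precisely because $\mathbf{m},\mathbf{n}\subset N$ --- so the identity continues to $\alpha=-N$; second, at $\alpha=-N$ both sides are finite sums rational in $c$, agreeing on $(0,1)$, hence at $c=\frac{p}{p-1}$. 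The paper's seemingly roundabout proof of (3) exists precisely to avoid continuing an infinite series in $\alpha$.
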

\begin{proof}
{\rm{(1)}}\,From (\ref{eq:master generating fnc 2-2}) and (\ref{eq:gen fnc of Meixner}), we have
\begin{align}
\sum_{\mathbf{m} \in \mathscr{P}}d_{\mathbf{m}}\frac{(\alpha)_{\mathbf{m}}}{\left(\frac{n}{r}\right)_{\mathbf{m}}}\Phi_{\mathbf{m}}(w)\Phi_{\mathbf{m}}(z)
&=(1-c)^{r\alpha}\sum_{\mathbf{x},\mathbf{m} \in \mathscr{P}}d_{\mathbf{m}}\frac{(\alpha)_{\mathbf{m}}}{\left(\frac{n}{r}\right)_{\mathbf{m}}}M_{\mathbf{m}}(\mathbf{x};\alpha ,c)\Phi_{\mathbf{m}}(z) \nonumber \\
{} & \quad \cdot d_{\mathbf{x}}\frac{(\alpha)_{\mathbf{x}}}{\left(\frac{n}{r}\right)_{\mathbf{x}}}c^{|\mathbf{x}|}\Delta(e-cw)^{-\alpha}\Phi_{\mathbf{x}}((e-w)(e-cw)^{-1}) \nonumber \\
&=\sum_{\mathbf{m},\mathbf{n} \in \mathscr{P}}(1-c)^{r\alpha}
d_{\mathbf{m}}\frac{(\alpha)_{\mathbf{m}}}{\left(\frac{n}{r}\right)_{\mathbf{m}}} 
d_{\mathbf{n}}\frac{(\alpha)_{\mathbf{n}}}{\left(\frac{n}{r}\right)_{\mathbf{n}}}c^{|\mathbf{n}|} \nonumber \\
{} & \quad \cdot 
\left\{\sum_{\mathbf{x} \in \mathscr{P}}d_{\mathbf{x}}\frac{(\alpha)_{\mathbf{x}}}{\left(\frac{n}{r}\right)_{\mathbf{x}}}c^{|\mathbf{x}|}M_{\mathbf{m}}(\mathbf{x};\alpha ,c)M_{\mathbf{n}}(\mathbf{x};\alpha ,c)\right\}
\Phi_{\mathbf{m}}(z)\Phi_{\mathbf{n}}(w). \nonumber
\end{align}
Therefore, by comparing the coefficients of $\Phi_{\mathbf{m}}(z)\Phi_{\mathbf{n}}(w)$ on both sides of this equation, we obtain (\ref{eq:Orthogonality of Meixner}).

\noindent
{\rm{(2)}}\,From (\ref{eq:master generating fnc 2-5}) and (\ref{eq:gen fnc of Charlier}), we derive
\begin{align}
\sum_{\mathbf{m} \in \mathscr{P}}d_{\mathbf{m}}\frac{1}{\left(\frac{n}{r}\right)_{\mathbf{m}}}\Phi_{\mathbf{m}}(w)\Phi_{\mathbf{m}}(z)
&=e^{-ra}\sum_{\mathbf{x},\mathbf{m} \in \mathscr{P}}d_{\mathbf{m}}\frac{a^{|\mathbf{m}|}}{\left(\frac{n}{r}\right)_{\mathbf{m}}}C_{\mathbf{m}}(\mathbf{x};a)\Phi_{\mathbf{m}}(z) \nonumber \\
{} & \quad \cdot d_{\mathbf{x}}\frac{a^{|\mathbf{x}|}}{\left(\frac{n}{r}\right)_{\mathbf{x}}}e^{\tr{w}}\Phi_{\mathbf{x}}\left(e-\frac{1}{a}w\right) \nonumber \\
&=\sum_{\mathbf{m},\mathbf{n} \in \mathscr{P}}e^{-ra}
d_{\mathbf{m}}\frac{a^{|\mathbf{m}|}}{\left(\frac{n}{r}\right)_{\mathbf{m}}}
d_{\mathbf{n}}\frac{1}{\left(\frac{n}{r}\right)_{\mathbf{n}}} \nonumber \\
{} & \quad \cdot \left\{\sum_{\mathbf{x} \in \mathscr{P}}d_{\mathbf{x}}\frac{a^{|\mathbf{x}|}}{\left(\frac{n}{r}\right)_{\mathbf{x}}}C_{\mathbf{m}}(\mathbf{x};a)C_{\mathbf{n}}(\mathbf{x};a)\right\}\Phi_{\mathbf{m}}(z)\Phi_{\mathbf{n}}(w). \nonumber
\end{align}
Then, by comparing the coefficients of $\Phi_{\mathbf{m}}(z)\Phi_{\mathbf{n}}(w)$, we have the conclusion.

\noindent
{\rm{(3)}}\,
In (\ref{eq:master generating fnc 2-2}), taking $\alpha =-N$, one has
\begin{align}
\sum_{\mathbf{m} \in \mathscr{P}}d_{\mathbf{m}}\frac{(-N)_{\mathbf{m}}}{\left(\frac{n}{r}\right)_{\mathbf{m}}}\Phi_{\mathbf{m}}(w)\Phi_{\mathbf{m}}(-z)
&=\sum_{\mathbf{m} \subset N}\binom{N}{\mathbf{m}}\Phi_{\mathbf{m}}(w)\Phi_{\mathbf{m}}(z) \nonumber \\
&=(1-c)^{-rN}\sum_{\mathbf{x},\mathbf{m} \subset N}d_{\mathbf{m}}\frac{(-N)_{\mathbf{m}}}{\left(\frac{n}{r}\right)_{\mathbf{m}}}M_{\mathbf{m}}(\mathbf{x};-N ,c)\Phi_{\mathbf{m}}(-z) \nonumber \\
{} & \quad \cdot d_{\mathbf{x}}\frac{(-N)_{\mathbf{x}}}{\left(\frac{n}{r}\right)_{\mathbf{x}}}c^{|\mathbf{x}|}\Delta(e-cw)^{N}\Phi_{\mathbf{x}}((e-w)(e-cw)^{-1}). \nonumber
\end{align}
The first equality follows from (\ref{eq:gamma special case}). 
Since the above sum is finite, we can put $c=\frac{p}{p-1},\,\,(0<p<1)$. 
Hence,
\begin{align}
\sum_{\mathbf{m} \subset N}\binom{N}{\mathbf{m}}\Phi_{\mathbf{m}}(w)\Phi_{\mathbf{m}}(z)
&=(1-p)^{rN}\sum_{\mathbf{x},\mathbf{m} \subset N}\binom{N}{\mathbf{m}}K_{\mathbf{m}}(\mathbf{x};p ,N)\Phi_{\mathbf{m}}(z)\binom{N}{\mathbf{x}}\left(\frac{p}{1-p}\right)^{|\mathbf{x}|} \nonumber \\
{} & \quad \cdot %\binom{N}{\mathbf{x}}\left(\frac{p}{1-p}\right)^{|\mathbf{x}|}
\Delta\left(e+\frac{p}{1-p}w\right)^{N}\Phi_{\mathbf{x}}\left((e-w)\left(e+\frac{p}{1-p}w\right)^{-1}\right). \nonumber
\end{align}
From (\ref{eq:gen fnc of Krawtchouk}), we have 
$$
\Delta\left(e+\frac{p}{1-p}w\right)^{N}\!\Phi_{\mathbf{x}}\left(\!(e-w)\!\left(e+\frac{p}{1-p}w\right)^{-1}\right)
\!=\!\sum_{\mathbf{n}\subset N}\binom{N}{\mathbf{n}}K_{\mathbf{n}}(\mathbf{x};p,N)\!\left(\frac{p}{1-p}\right)^{|\mathbf{n}|}\!\Phi_{\mathbf{n}}(w). 
$$
Therefore, 
% \begin{align}
% \sum_{\mathbf{m} \subset N}(-1)^{|\mathbf{m}|}\binom{N}{\mathbf{m}}\Phi_{\mathbf{m}}(w)\Phi_{\mathbf{m}}(z)
% &=\sum_{\mathbf{m},\mathbf{n} \subset N}(1-p)^{rN}(-1)^{|\mathbf{m}|}\binom{N}{\mathbf{m}}
% \left(\frac{p}{1-p}\right)^{|\mathbf{n}|}\binom{N}{\mathbf{n}} \nonumber \\ 
% {} & \quad \cdot 
% \sum_{\mathbf{x} \subset N}
% K_{\mathbf{m}}(\mathbf{x};p ,N)\Phi_{\mathbf{m}}(z)\binom{N}{\mathbf{x}}\left(\frac{p}{1-p}\right)^{|\mathbf{x}|} \nonumber
% \end{align}
\begin{align}
\sum_{\mathbf{m} \subset N}\binom{N}{\mathbf{m}}\Phi_{\mathbf{m}}(w)\Phi_{\mathbf{m}}(z)
&=\sum_{\mathbf{m},\mathbf{n} \subset N}\binom{N}{\mathbf{m}}
\left(\frac{p}{1-p}\right)^{|\mathbf{n}|}\binom{N}{\mathbf{n}} \nonumber \\ 
{} & \quad \!\! \cdot \!\!
\left\{\sum_{\mathbf{x} \subset N}\!\binom{N}{\mathbf{x}}p^{|\mathbf{x}|}(1-p)^{rN-|\mathbf{x}|}K_{\mathbf{m}}(\mathbf{x};p ,N)K_{\mathbf{n}}(\mathbf{x};p ,N)\!\right\}\!\Phi_{\mathbf{m}}(z)\Phi_{\mathbf{n}}(w). \nonumber
\end{align}
\end{proof}

\subsection{Difference equations and recurrence relations}
In this subsection, we derive the difference equations and recurrence formulas for our polynomials 
from (\ref{eq:differential equation for Laguerre}), Lemma\,\ref{thm:differential Lemma for Laguerre and difference Lemma for Meixner} and {\rm{(1)}} of Theorem\,\ref{thm:master generating fnc 2}.
\begin{thm}
\label{thm:Difference eq of MDOP}
{\rm{(1)}}\,
For $\mathbf{x},\mathbf{m} \in \mathscr{P}$, we have
\begin{align}
d_{\mathbf{x}}(c-1)|\mathbf{m}|M_{\mathbf{m}}(\mathbf{x};\alpha ,c)
&=\sum_{j=1}^{r}d_{\mathbf{x}+\epsilon_{j}}\tilde{a}_{j}(-\mathbf{x}-\epsilon_{j})\left(x_{j}+\alpha-\frac{d}{2}(j-1)\right){c}M_{\mathbf{m}}(\mathbf{x}+\epsilon_{j};\alpha,c) \nonumber \\
{} & \quad -\sum_{j=1}^{r}d_{\mathbf{x}}(x_{j}+(x_{j}+\alpha){c})M_{\mathbf{m}}(\mathbf{x};\alpha,c) \nonumber \\
\label{eq:Difference eq of Meixner}
{} & \quad +\sum_{j=1}^{r}d_{\mathbf{x}-\epsilon_{j}}\tilde{a}_{j}(\mathbf{x}-\epsilon_{j})\left(x_{j}+\frac{d}{2}(r-j)\right)M_{\mathbf{m}}(\mathbf{x}-\epsilon_{j};\alpha,c).
\end{align}
{\rm{(2)}}\,
For $\mathbf{x},\mathbf{m} \in \mathscr{P}$, we have
\begin{align}
-d_{\mathbf{x}}|\mathbf{m}|C_{\mathbf{m}}(\mathbf{x};a)
&=\sum_{j=1}^{r}d_{\mathbf{x}+\epsilon_{j}}\tilde{a}_{j}(-\mathbf{x}-\epsilon_{j}){a}C_{\mathbf{m}}(\mathbf{x}+\epsilon_{j};a) \nonumber \\
{} & \quad -\sum_{j=1}^{r}d_{\mathbf{x}}(x_{j}+a)C_{\mathbf{m}}(\mathbf{x};a) \nonumber \\
\label{eq:Difference eq of Charlier}
{} & \quad +\sum_{j=1}^{r}d_{\mathbf{x}-\epsilon_{j}}\tilde{a}_{j}(\mathbf{x}-\epsilon_{j})\left(x_{j}+\frac{d}{2}(r-j)\right)C_{\mathbf{m}}(\mathbf{x}-\epsilon_{j};a).
\end{align}
{\rm{(3)}}\,
For $\mathbf{x},\mathbf{m} \subset N$, we have
\begin{align}
-d_{\mathbf{x}}|\mathbf{m}|K_{\mathbf{m}}(\mathbf{x};p ,N)
&=\sum_{j=1}^{r}d_{\mathbf{x}+\epsilon_{j}}\tilde{a}_{j}(-\mathbf{x}-\epsilon_{j})\left(N-x_{j}+\frac{d}{2}(j-1)\right){p}K_{\mathbf{m}}(\mathbf{x}+\epsilon_{j};p,N) \nonumber \\
{} & \quad -\sum_{j=1}^{r}d_{\mathbf{x}}(p(N-x_{j})+x_{j}(1-p))K_{\mathbf{m}}(\mathbf{x};p,N) \nonumber \\
\label{eq:Difference eq of Krawtchouk}
{} & \quad +\sum_{j=1}^{r}d_{\mathbf{x}-\epsilon_{j}}\tilde{a}_{j}(\mathbf{x}-\epsilon_{j})\left(x_{j}+\frac{d}{2}(r-j)\right)(1-p)K_{\mathbf{m}}(\mathbf{x}-\epsilon_{j};p,N).
\end{align}
\end{thm}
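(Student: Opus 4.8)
The plan is to prove (1) by applying two different differential operators to the master identity \eqref{eq:master generating fnc 2-1} of Theorem~\ref{thm:master generating fnc 2}, and then to obtain (2) and (3) from (1) by degeneration. Write $R(u,z)$ for the common value of the two sides of \eqref{eq:master generating fnc 2-1}. The left-hand side $\sum_{\mathbf{m}}\psi_{\mathbf{m}}^{(\alpha)}(u)\Phi_{\mathbf{m}}(z)$ is simultaneously an eigenfunction of $D_{\alpha}^{(1)}$ in $u$ and of the Euler operator $D_{\alpha}^{(3)}$ in $z$: by \eqref{eq:differential equation for Laguerre} one has $D_{\alpha}^{(1)}\psi_{\mathbf{m}}^{(\alpha)}(u)=2|\mathbf{m}|\psi_{\mathbf{m}}^{(\alpha)}(u)$, while $D_{\alpha}^{(3)}\Phi_{\mathbf{m}}(z)=2|\mathbf{m}|\Phi_{\mathbf{m}}(z)$. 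Since $D_{\alpha}^{(1)}$ touches only $u$ and $D_{\alpha}^{(3)}$ only $z$, both applied to the left-hand side give $\sum_{\mathbf{m}}2|\mathbf{m}|\psi_{\mathbf{m}}^{(\alpha)}(u)\Phi_{\mathbf{m}}(z)$, whence $D_{\alpha}^{(1)}R=D_{\alpha}^{(3)}R$. The term-by-term differentiation is justified by the absolute and locally uniform convergence from Lemma~\ref{thm:absolute convergence of gen fnc of gen fnc for Meixner}.

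Next I would evaluate each operator on the double-sum form of $R$. Applying $D_{\alpha}^{(3)}$ simply brings down a factor $2|\mathbf{m}|$ on every Meixner term. Applying $D_{\alpha}^{(1)}$ acts only on the $u$-factor $e^{-\frac{1+c}{1-c}\tr{u}}\Phi_{\mathbf{x}}(u)$, so here I invoke part (3) of Lemma~\ref{thm:differential Lemma for Laguerre and difference Lemma for Meixner}, that is \eqref{eq:conj differntial of D3 and spherical poly}, with $C=\frac{1+c}{1-c}$; then $1-C^{2}=\frac{-4c}{(1-c)^{2}}$ and $C(2x_{j}+\alpha)-\alpha=\frac{2}{1-c}(x_{j}+c(x_{j}+\alpha))$. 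This turns $D_{\alpha}^{(1)}[e^{-C\tr{u}}\Phi_{\mathbf{x}}(u)]$ into a three-term combination of $\Phi_{\mathbf{x}+\epsilon_{j}}(u)$, $\Phi_{\mathbf{x}}(u)$ and $\Phi_{\mathbf{x}-\epsilon_{j}}(u)$, all carrying the common factor $e^{-C\tr{u}}$.

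The identity is then extracted by linear independence. Comparing coefficients of the linearly independent $\Phi_{\mathbf{m}}(z)$ cancels the factor $d_{\mathbf{m}}\frac{(\alpha)_{\mathbf{m}}}{(\frac{n}{r})_{\mathbf{m}}}$ and leaves an equality of functions of $u$; comparing coefficients of $\Phi_{\mathbf{x}}(u)$, after reindexing $\mathbf{x}\mapsto\mathbf{x}\mp\epsilon_{j}$ in the raising and lowering terms, produces the recurrence in the normalization $d_{\mathbf{x}}\frac{1}{(\frac{n}{r})_{\mathbf{x}}}(\frac{2c}{1-c})^{|\mathbf{x}|}$. Multiplying through by $(\frac{n}{r})_{\mathbf{x}}(\frac{2c}{1-c})^{-|\mathbf{x}|}$ and by the scalar $\frac{c-1}{2}$ yields (1). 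The cleanliness of this step rests on the identity $\frac{(\frac{n}{r})_{\mathbf{x}}}{(\frac{n}{r})_{\mathbf{x}-\epsilon_{j}}}=x_{j}+\frac{d}{2}(r-j)$ (using $\frac{n}{r}=1+\frac{d}{2}(r-1)$): in the $\mathbf{x}+\epsilon_{j}$ contribution the Pieri factor $x_{j}+1+\frac{d}{2}(r-j)$ supplied by \eqref{eq:conj differntial of D3 and spherical poly} cancels exactly against $\frac{(\frac{n}{r})_{\mathbf{x}}}{(\frac{n}{r})_{\mathbf{x}+\epsilon_{j}}}$, leaving the coefficient $c\,(x_{j}+\alpha-\frac{d}{2}(j-1))$, while in the $\mathbf{x}-\epsilon_{j}$ contribution the same ratio produces the factor $x_{j}+\frac{d}{2}(r-j)$. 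This coefficient bookkeeping—verifying that the powers of $\frac{2c}{1-c}$, the ratios of $(\frac{n}{r})_{\mathbf{x}}$, and the shifted factors $\tilde{a}_{j}$ collapse to exactly the three stated coefficients—is the only delicate point, and I expect it to be the main obstacle.

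Finally, (2) and (3) follow from (1) by degeneration. For (2) I take $c=\frac{a}{a+\alpha}$ and let $\alpha\to\infty$, using \eqref{eq:relation Meixner and Charlier} of Proposition~\ref{thm:relation of MDOP} together with $c\to0$, $(c-1)\to-1$, $c(x_{j}+\alpha-\frac{d}{2}(j-1))\to a$ and $x_{j}+(x_{j}+\alpha)c\to x_{j}+a$. For (3) I specialize $\alpha=-N$, $c=\frac{p}{p-1}$, so that by \eqref{eq:relation Meixner and Krawtchouk} the Meixner polynomials become $K_{\mathbf{m}}(\mathbf{x};p,N)$; since $c-1=\frac{1}{p-1}$ the three coefficients rescale by a common factor, and multiplying the whole identity by $1-p$ converts them into $p(N-x_{j}+\frac{d}{2}(j-1))$, $-(p(N-x_{j})+x_{j}(1-p))$ and $(x_{j}+\frac{d}{2}(r-j))(1-p)$, which is precisely \eqref{eq:Difference eq of Krawtchouk}.
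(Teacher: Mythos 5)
Your proposal is correct and takes essentially the same route as the paper's own proof: both hinge on applying $\tfrac{c-1}{2}e^{\frac{1+c}{1-c}\tr{u}}D_{\alpha}^{(1)}$ to the master identity \eqref{eq:master generating fnc 2-1}, invoking part (3) of Lemma~\ref{thm:differential Lemma for Laguerre and difference Lemma for Meixner} with $C=\frac{1+c}{1-c}$, comparing coefficients of $\Phi_{\mathbf{m}}(z)\Phi_{\mathbf{x}}(u)$ via $\left(\frac{n}{r}\right)_{\mathbf{x}+\epsilon_{j}}=\left(x_{j}+1+\frac{d}{2}(r-j)\right)\left(\frac{n}{r}\right)_{\mathbf{x}}$, and then deducing (2) and (3) by the degenerations $c=\frac{a}{a+\alpha},\ \alpha\to\infty$ and $\alpha=-N,\ c=\frac{p}{p-1}$ with an overall factor $1-p$. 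The only cosmetic difference is that you generate the factor $2|\mathbf{m}|$ with the Euler operator $D_{\alpha}^{(3)}$ acting in $z$, whereas the paper uses the eigenvalue equation \eqref{eq:differential equation for Laguerre} for $D_{\alpha}^{(1)}$ on the Laguerre-sum side; the two are equivalent.
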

\begin{proof}
{\rm{(1)}}\,Let us apply operator $\frac{c-1}{2}e^{\frac{1+c}{1-c}\tr{u}}D_{\alpha}^{(1)}$ to both sides of (\ref{eq:master generating fnc 2-1}). 
Since $D_{\alpha}^{(1)}\psi_{\mathbf{m}}^{(\alpha)}(u)=2|\mathbf{m}|\psi_{\mathbf{m}}^{(\alpha)}(u)$, we have
\begin{align}
\frac{c-1}{2}e^{\frac{1+c}{1-c}\tr{u}}D_{\alpha}^{(1)}\left(\sum_{\mathbf{m} \in \mathscr{P}}\psi_{\mathbf{m}}^{(\alpha)}(u)\Phi_{\mathbf{m}}(z)\right)
&=\sum_{\mathbf{m} \in \mathscr{P}}(c-1)e^{\frac{1+c}{1-c}\tr{u}}|\mathbf{m}|\psi_{\mathbf{m}}^{(\alpha)}(u)\Phi_{\mathbf{m}}(z) \nonumber \\
&=\sum_{\mathbf{x},\mathbf{m} \in \mathscr{P}}d_{\mathbf{m}}\frac{(\alpha)_{\mathbf{m}}}{\left(\frac{n}{r}\right)_{\mathbf{m}}}\left(\frac{2c}{1-c}\right)^{|\mathbf{x}|}\Phi_{\mathbf{x}}(u)\Phi_{\mathbf{m}}(z) \nonumber \\
{} & \quad \cdot d_{\mathbf{x}}\frac{1}{\left(\frac{n}{r}\right)_{\mathbf{x}}}(c-1)|\mathbf{m}|M_{\mathbf{m}}(\mathbf{x};\alpha ,c). \nonumber
\end{align}
On the other hand, by (\ref{eq:conj differntial of D3 and spherical poly}), we have
% \begin{align}
% \frac{c-1}{2}e^{\frac{1+c}{1-c}\tr{u}}D_{\alpha}^{(3)}\left(\sum_{\mathbf{m} \in \mathscr{P}}\psi_{\mathbf{m}}^{(\alpha)}(u)\Phi_{\mathbf{m}}(z)\right)
% &=\sum_{\mathbf{x},\mathbf{m} \in \mathscr{P}}d_{\mathbf{m}}\frac{(\alpha)_{\mathbf{m}}}{\left(\frac{n}{r}\right)_{\mathbf{m}}}M_{\mathbf{m}}(\mathbf{x};\alpha ,c)\Phi_{\mathbf{m}}(z)d_{\mathbf{x}}\frac{1}{\left(\frac{n}{r}\right)_{\mathbf{x}}}\left(\frac{2c}{1-c}\right)^{|\mathbf{x}|} \nonumber \\
% {} & \quad \cdot \frac{c-1}{2}e^{\frac{1+c}{1-c}\tr{u}}D_{\alpha}^{(3)}(e^{-\frac{1+c}{1-c}\tr{u}}\Phi_{\mathbf{x}}(u)) \nonumber \\
% &=\sum_{\mathbf{x},\mathbf{m} \in \mathscr{P}}d_{\mathbf{m}}\frac{(\alpha)_{\mathbf{m}}}{\left(\frac{n}{r}\right)_{\mathbf{m}}}M_{\mathbf{m}}(\mathbf{x};\alpha ,c)\Phi_{\mathbf{m}}(z)d_{\mathbf{x}}\frac{1}{\left(\frac{n}{r}\right)_{\mathbf{x}}}\left(\frac{2c}{1-c}\right)^{|\mathbf{x}|} \nonumber \\
% {} & \quad \left\{\frac{2c}{1-c}\sum_{j=1}^{r}\tilde{a_{j}}(\mathbf{x})\Phi_{\mathbf{x}+\epsilon_{j}}(u) \right.  \nonumber \\ 
% {} & \quad \left. -\sum_{j=1}^{r}(x_{j}+(x_{j}+\alpha)c)\Phi_{\mathbf{x}}(u) \right. \nonumber \\
% {} & \quad \left. -\sum_{j=1}^{r}\left(x_{j}+\frac{d}{2}(r-j)\right)\left(x_{j}+\alpha- 1-\frac{d}{2}(j-1)\right)\tilde{a_{j}}(-\mathbf{x})\Phi_{\mathbf{x}-\epsilon_{j}}(u)\right\}
% \end{align}
\begin{align}
&\frac{c-1}{2}e^{\frac{1+c}{1-c}\tr{u}}D_{\alpha}^{(1)}\left(\sum_{\mathbf{m} \in \mathscr{P}}\psi_{\mathbf{m}}^{(\alpha)}(u)\Phi_{\mathbf{m}}(z)\right) \nonumber \\
{} & \quad =\sum_{\mathbf{x},\mathbf{m} \in \mathscr{P}}d_{\mathbf{m}}\frac{(\alpha)_{\mathbf{m}}}{\left(\frac{n}{r}\right)_{\mathbf{m}}}M_{\mathbf{m}}(\mathbf{x};\alpha ,c)\Phi_{\mathbf{m}}(z)d_{\mathbf{x}}\frac{1}{\left(\frac{n}{r}\right)_{\mathbf{x}}}\left(\frac{2c}{1-c}\right)^{|\mathbf{x}|}
\frac{c-1}{2}e^{\frac{1+c}{1-c}\tr{u}}D_{\alpha}^{(1)}(e^{-\frac{1+c}{1-c}\tr{u}}\Phi_{\mathbf{x}}(u)) \nonumber \\
{} & \quad =\sum_{\mathbf{x},\mathbf{m} \in \mathscr{P}}d_{\mathbf{m}}\frac{(\alpha)_{\mathbf{m}}}{\left(\frac{n}{r}\right)_{\mathbf{m}}}M_{\mathbf{m}}(\mathbf{x};\alpha ,c)\Phi_{\mathbf{m}}(z)d_{\mathbf{x}}\frac{1}{\left(\frac{n}{r}\right)_{\mathbf{x}}}\left(\frac{2c}{1-c}\right)^{|\mathbf{x}|} \nonumber \\
{} & \quad \quad \cdot \left\{\frac{2c}{1-c}\sum_{j=1}^{r}\tilde{a_{j}}(\mathbf{x})\Phi_{\mathbf{x}+\epsilon_{j}}(u) 
-\sum_{j=1}^{r}(x_{j}+(x_{j}+\alpha)c)\Phi_{\mathbf{x}}(u) \right. \nonumber \\
{} & \quad \quad \quad \left. +\frac{1-c}{2}\sum_{j=1}^{r}\left(x_{j}+\frac{d}{2}(r-j)\right)\left(x_{j}+\alpha- 1-\frac{d}{2}(j-1)\right)\tilde{a_{j}}(-\mathbf{x})\Phi_{\mathbf{x}-\epsilon_{j}}(u)\right\} \nonumber \\
{} & \quad =\sum_{\mathbf{x},\mathbf{m} \in \mathscr{P}}d_{\mathbf{m}}\frac{(\alpha)_{\mathbf{m}}}{\left(\frac{n}{r}\right)_{\mathbf{m}}}\left(\frac{2c}{1-c}\right)^{|\mathbf{x}|}\Phi_{\mathbf{x}}(u)\Phi_{\mathbf{m}}(z) \nonumber \\
{} & \quad \quad \cdot 
\left\{\sum_{j=1}^{r}d_{\mathbf{x}+\epsilon_{j}}\tilde{a_{j}}(-\mathbf{x}-\epsilon_{j})\frac{x_{j}+1+\frac{d}{2}(r-j)}{\left(\frac{n}{r}\right)_{\mathbf{x}+\epsilon_{j}}}\left(x_{j}+\alpha-\frac{d}{2}(j-1)\right)c
M_{\mathbf{m}}(\mathbf{x}+\epsilon_{j};\alpha ,c) \right. \nonumber \\
{} & \quad \quad \quad \left. -\sum_{j=1}^{r}d_{\mathbf{x}}\frac{1}{\left(\frac{n}{r}\right)_{\mathbf{x}}}(x_{j}+(x_{j}+\alpha)c)M_{\mathbf{m}}(\mathbf{x};\alpha ,c) \right. \nonumber \\
{} & \quad \quad \quad \left. +\sum_{j=1}^{r}d_{\mathbf{x}-\epsilon_{j}}\tilde{a_{j}}(\mathbf{x}-\epsilon_{j})\frac{1}{\left(\frac{n}{r}\right)_{\mathbf{x}-\epsilon_{j}}}M_{\mathbf{m}}(\mathbf{x}-\epsilon_{j};\alpha ,c)\right\}. \nonumber
\end{align}
Finally, the conclusion is obtained by 
$$
\left(\frac{n}{r}\right)_{\mathbf{x}+\epsilon_{j}}=\left(x_{j}+1+\frac{d}{2}(r-j)\right)\left(\frac{n}{r}\right)_{\mathbf{x}}
$$
and comparing the coefficients in the above. 
%both sides of (\ref{eq:master generating fnc 2-1}).

\noindent
{\rm{(2)}}\,
%In (\ref{thm:Difference eq of MDOP}), by putting $c=\frac{a}{a+\alpha}$ and taking the limit as $\alpha \to \infty$, we obtain (\ref{eq:Difference eq of Charlier}).
Put $c=\frac{a}{a+\alpha}$ in (\ref{eq:Difference eq of Meixner}) and take the limit as $\alpha \to \infty$. 
Then, by (\ref{eq:relation Meixner and Charlier}), we have the conclusion. 

\noindent
{\rm{(3)}}\,
Put $c=\frac{p}{p-1}, \alpha =-N$ and multiply $1-p$ in (\ref{eq:Difference eq of Meixner}). 
Then, by (\ref{eq:relation Meixner and Krawtchouk}), we have the conclusion.
\end{proof}

The recurrence formulas follow immediately from Theorem\,\ref{thm:Difference eq of MDOP} and Proposition\,\ref{thm:duality of MDOP}.
\begin{thm}
\label{thm:Recurrence formula of MDOP}
{\rm{(1)}}\,
For $\mathbf{x},\mathbf{m} \in \mathscr{P}$, we have
\begin{align}
d_{\mathbf{m}}(c-1)|\mathbf{x}|M_{\mathbf{m}}(\mathbf{x};\alpha ,c)
&=\sum_{j=1}^{r}d_{\mathbf{m}+\epsilon_{j}}\tilde{a}_{j}(-\mathbf{m}-\epsilon_{j})\left(m_{j}+\alpha-\frac{d}{2}(j-1)\right){c}M_{\mathbf{m}+\epsilon_{j}}(\mathbf{x};\alpha,c) \nonumber \\
{} & \quad -\sum_{j=1}^{r}d_{\mathbf{m}}(m_{j}+(m_{j}+\alpha){c})M_{\mathbf{m}}(\mathbf{x};\alpha,c) \nonumber \\
{} & \quad +\sum_{j=1}^{r}d_{\mathbf{m}-\epsilon_{j}}\tilde{a}_{j}(\mathbf{m}-\epsilon_{j})\left(m_{j}+\frac{d}{2}(r-j)\right)M_{\mathbf{m}-\epsilon_{j}}(\mathbf{x};\alpha,c).
\end{align}
{\rm{(2)}}\,
For $\mathbf{x},\mathbf{m} \in \mathscr{P}$, we have
\begin{align}
-d_{\mathbf{m}}|\mathbf{x}|C_{\mathbf{m}}(\mathbf{x};a)
&=\sum_{j=1}^{r}d_{\mathbf{m}+\epsilon_{j}}\tilde{a}_{j}(-\mathbf{m}-\epsilon_{j}){a}C_{\mathbf{m}+\epsilon_{j}}(\mathbf{x};a) \nonumber \\
{} & \quad -\sum_{j=1}^{r}d_{\mathbf{m}}(m_{j}+a)C_{\mathbf{m}}(\mathbf{x};a) \nonumber \\
{} & \quad +\sum_{j=1}^{r}d_{\mathbf{m}-\epsilon_{j}}\tilde{a}_{j}(\mathbf{m}-\epsilon_{j})\left(m_{j}+\frac{d}{2}(r-j)\right)C_{\mathbf{m}-\epsilon_{j}}(\mathbf{x};a).
\end{align}
{\rm{(3)}}\,
For $\mathbf{x},\mathbf{m} \subset N$, we have
\begin{align}
-d_{\mathbf{m}}|\mathbf{x}|K_{\mathbf{m}}(\mathbf{x};p ,N)
&=\sum_{j=1}^{r}d_{\mathbf{m}+\epsilon_{j}}\tilde{a}_{j}(-\mathbf{m}-\epsilon_{j})\left(N-m_{j}+\frac{d}{2}(j-1)\right){p}K_{\mathbf{m}+\epsilon_{j}}(\mathbf{x};p,N) \nonumber \\
{} & \quad -\sum_{j=1}^{r}d_{\mathbf{m}}(p(N-m_{j})+m_{j}(1-p))K_{\mathbf{m}}(\mathbf{x};p,N) \nonumber \\
{} & \quad +\sum_{j=1}^{r}d_{\mathbf{m}-\epsilon_{j}}\tilde{a}_{j}(\mathbf{m}-\epsilon_{j})\left(m_{j}+\frac{d}{2}(r-j)\right)(1-p)K_{\mathbf{m}-\epsilon_{j}}(\mathbf{x};p,N).
\end{align}
\end{thm}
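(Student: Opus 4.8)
The plan is to derive the recurrence formulas, which shift the \emph{index} partition $\mathbf{m}$, directly from the difference equations of Theorem~\ref{thm:Difference eq of MDOP}, which shift the \emph{argument} partition $\mathbf{x}$, by invoking the symmetry of each polynomial in its two partition arguments recorded in Proposition~\ref{thm:duality of MDOP}. In other words, the recurrence is nothing but the ``transpose'' of the difference equation read through duality.

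Concretely, for part (1) I would begin with the Meixner difference equation, the identity of Theorem~\ref{thm:Difference eq of MDOP}(1) expressing $d_{\mathbf{x}}(c-1)|\mathbf{m}|M_{\mathbf{m}}(\mathbf{x};\alpha,c)$ as a combination of $M_{\mathbf{m}}(\mathbf{x};\alpha,c)$ and $M_{\mathbf{m}}(\mathbf{x}\pm\epsilon_j;\alpha,c)$. Since that identity is asserted for \emph{every} pair $\mathbf{m},\mathbf{x}\in\mathscr{P}$, I may interchange the two partitions throughout, replacing $\mathbf{x}$ by $\mathbf{m}$ and $\mathbf{m}$ by $\mathbf{x}$ (so $x_j\mapsto m_j$ in each coefficient, $d_{\mathbf{x}\pm\epsilon_j}\mapsto d_{\mathbf{m}\pm\epsilon_j}$, $\tilde{a}_j(\pm\mathbf{x}\mp\epsilon_j)\mapsto\tilde{a}_j(\pm\mathbf{m}\mp\epsilon_j)$, and the shifts now acting on the first slot). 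This yields an identity whose left-hand side is $d_{\mathbf{m}}(c-1)|\mathbf{x}|M_{\mathbf{x}}(\mathbf{m};\alpha,c)$ and whose right-hand side is built from $M_{\mathbf{x}}(\mathbf{m};\alpha,c)$ and $M_{\mathbf{x}}(\mathbf{m}\pm\epsilon_j;\alpha,c)$. I would then apply the duality $M_{\mathbf{m}}(\mathbf{x};\alpha,c)=M_{\mathbf{x}}(\mathbf{m};\alpha,c)$ of Proposition~\ref{thm:duality of MDOP}(1) term by term---crucially also to the shifted indices---turning each $M_{\mathbf{x}}(\mathbf{m}\pm\epsilon_j;\alpha,c)$ into $M_{\mathbf{m}\pm\epsilon_j}(\mathbf{x};\alpha,c)$ and each $M_{\mathbf{x}}(\mathbf{m};\alpha,c)$ into $M_{\mathbf{m}}(\mathbf{x};\alpha,c)$. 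Reading off the result gives precisely the recurrence of part (1). Parts (2) and (3) follow by the identical two-step recipe, pairing Theorem~\ref{thm:Difference eq of MDOP}(2) with Proposition~\ref{thm:duality of MDOP}(2) in the Charlier case and Theorem~\ref{thm:Difference eq of MDOP}(3) with Proposition~\ref{thm:duality of MDOP}(3) in the Krawtchouk case.

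Because the argument is a relabeling of an already-proved identity followed by substitution of the duality, there is no substantive obstacle; the only point deserving a word is the meaning of the shifted terms. For fixed $\mathbf{m}$ the Meixner polynomial $M_{\mathbf{m}}(\mathbf{x};\alpha,c)=\sum_{\mathbf{k}\subset\mathbf{m}}\binom{\mathbf{m}}{\mathbf{k}}\gamma_{\mathbf{k}}(\mathbf{x}-\rho)(\alpha)_{\mathbf{k}}^{-1}(1-\tfrac{1}{c})^{|\mathbf{k}|}$ depends on $\mathbf{x}$ only through the symmetric polynomials $\gamma_{\mathbf{k}}(\mathbf{x}-\rho)$, hence extends to a polynomial in $\mathbf{x}$; by duality it is equally polynomial in $\mathbf{m}$, and $d_{\bullet}$ together with $\tilde{a}_j$ extend as rational functions of their arguments. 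Consequently the difference equation of Theorem~\ref{thm:Difference eq of MDOP} is an identity of such functions in which each $M_{\mathbf{m}}(\mathbf{x}\pm\epsilon_j;\cdot)$ is unambiguous even when $\mathbf{x}\pm\epsilon_j$ leaves $\mathscr{P}$, and duality persists as a polynomial identity since it holds on all of $\mathscr{P}\times\mathscr{P}$. Thus the interchange $\mathbf{m}\leftrightarrow\mathbf{x}$ and the duality apply verbatim with all conventions inherited unchanged, and nothing beyond Theorem~\ref{thm:Difference eq of MDOP} and Proposition~\ref{thm:duality of MDOP} needs to be verified.
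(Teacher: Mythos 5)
Your proposal is correct and is essentially identical to the paper's own proof: the paper obtains these recurrences precisely by combining the difference equations of Theorem~\ref{thm:Difference eq of MDOP} with the duality of Proposition~\ref{thm:duality of MDOP}, which is exactly the swap-and-dualize argument you spell out. Your closing remark on interpreting the shifted terms $M_{\mathbf{m}\pm\epsilon_{j}}$ via polynomial extension is a reasonable elaboration of a point the paper leaves implicit.
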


\subsection{Determinant formulas}
In this subsection, we assume $d=2$ (In particular, we remark that $\frac{n}{r}=r$). 
In this case the spherical polynomials $\Phi_{\mathbf{m}}$ are proportional to the Schur polynomials $s_{\mathbf{m}}$ (recall (\ref{eq:spherical and schur})). 
Further, there are some determinant formulas for the multivariate Meixner, Charlier and Krawtchouk polynomials. 
Before stating the main theorem, we provide the following lemma needed to prove the determinant formulas. 
\begin{lem}[$\cite{H}$\,Theorem\,1.2.1]
\label{thm:det lem}
Consider $r$ power series of single variable $z \in \mathbb{C}$
$$
f_{\mu}(z)=\sum_{m\geq 0}A_{m}^{(\mu)}z^{m}\,\,\,\,(\mu=1,\cdots,r).
$$
Then, 
\begin{equation}
\label{eq:det lem}
\frac{\det{(f_{\mu}(z_{\nu}))}}{V(z_{1},\ldots,z_{r})}=\sum_{\mathbf{m} \in \mathscr{P}}A_{\mathbf{m}}s_{\mathbf{m}}(z_{1},\ldots,z_{r}), 
\end{equation}
where $V(z_{1},\ldots,z_{r})$ denote the Vandermonde determinant
$$
V(z_{1},\ldots,z_{r}):=\prod_{1\leq \mu <\nu \leq r}(z_{\mu}-z_{\nu}), 
$$
and 
$$
A_{\mathbf{m}}:=\det{(A_{m_{\mu}+r-\mu}^{(\nu)})}. 
$$
\end{lem}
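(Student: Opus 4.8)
The plan is to expand $\det(f_\mu(z_\nu))$ by multilinearity and to recognize the resulting alternating sums as Schur bialternants. It is cleanest to read both sides of \eqref{eq:det lem} as identities of formal power series in $z_1,\ldots,z_r$: the coefficient of any fixed monomial then receives only finitely many contributions, so every regrouping below is legitimate, and genuine convergence on a common polydisc follows afterwards from the hypothesis that the $f_\mu$ are convergent power series.

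First I would write the $\nu$-th column of the matrix $\bigl(f_\mu(z_\nu)\bigr)_{1\le \mu,\nu\le r}$ as $\sum_{\ell\ge 0} z_\nu^{\ell}\,\mathbf{a}_{\ell}$, where $\mathbf{a}_{\ell}:=\bigl(A_{\ell}^{(1)},\ldots,A_{\ell}^{(r)}\bigr)^{\top}$ collects the degree-$\ell$ coefficients. Expanding the determinant multilinearly in all $r$ columns gives
\[
\det\bigl(f_\mu(z_\nu)\bigr)=\sum_{\ell_1,\ldots,\ell_r\ge 0} z_1^{\ell_1}\cdots z_r^{\ell_r}\,\det\bigl(\mathbf{a}_{\ell_1},\ldots,\mathbf{a}_{\ell_r}\bigr),
\]
and since the coefficient determinant is alternating in $(\ell_1,\ldots,\ell_r)$, only tuples with pairwise distinct entries survive. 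Grouping the terms according to the underlying set $\{\ell_1,\ldots,\ell_r\}=\{s_1>\cdots>s_r\ge 0\}$ and summing over the permutations $\sigma\in\mathfrak{S}_r$ that reorder them, the sign of $\sigma$ is produced simultaneously by the coefficient determinant and by the monomial, so the two factor and the block equals $\det(\mathbf{a}_{s_1},\ldots,\mathbf{a}_{s_r})\cdot\det\bigl(z_\nu^{s_k}\bigr)_{k,\nu}$.

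Finally I would use the substitution $s_k=m_k+r-k$, which is a bijection between strictly decreasing $r$-tuples $s_1>\cdots>s_r\ge 0$ and partitions $\mathbf{m}\in\mathscr{P}$. Under it the generalized Vandermonde becomes $\det\bigl(z_\nu^{m_k+r-k}\bigr)=s_{\mathbf{m}}(z_1,\ldots,z_r)\,\det\bigl(z_\nu^{r-k}\bigr)=s_{\mathbf{m}}(z_1,\ldots,z_r)\,V(z_1,\ldots,z_r)$ by the bialternant definition of $s_{\mathbf{m}}$ together with the direct Vandermonde evaluation $\det(z_\nu^{r-k})=V(z_1,\ldots,z_r)$, while $\det(\mathbf{a}_{s_1},\ldots,\mathbf{a}_{s_r})=\det\bigl(A_{m_k+r-k}^{(\mu)}\bigr)=A_{\mathbf{m}}$ after the harmless transposition matching the indexing in the statement. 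Summing over $\mathbf{m}$ and dividing by $V(z_1,\ldots,z_r)$ yields \eqref{eq:det lem}. The only genuinely delicate point is the bookkeeping of signs: one must check that the reversal $s_k=m_k+r-k$ and the two alternants carry exactly matching signs so that they cancel, and that the regrouping of the \emph{a priori} infinite sum is valid, which is precisely why I would phrase the whole argument at the level of formal power series.
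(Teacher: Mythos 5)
Your proof is correct, and it is the classical argument for this identity: expand the determinant multilinearly in the columns, discard tuples with repeated exponents, regroup over strictly decreasing exponent sequences to factor out the alternant $\det(z_\nu^{s_k})$, and convert via $s_k=m_k+r-k$ and the bialternant definition of $s_{\mathbf{m}}$; the sign bookkeeping you flag does work out exactly as you describe, and the formal-power-series reading disposes of the rearrangement issue. Note that the paper itself offers no proof of this lemma --- it is imported verbatim as Theorem 1.2.1 of Hua's book --- so your self-contained derivation is, if anything, more than the paper provides, and it coincides with the standard proof found in that reference.
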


\begin{thm}
{\rm{(1)}}\,
%Let $\alpha \in \mathbb{C}\setminus \mathbb{Z}_{\leq r-1}, c\not=0$. 
For any $\mathbf{m},\mathbf{x} \in \mathscr{P}$, we obtain
\begin{align}
M_{\mathbf{m}}(\mathbf{x};\alpha ,c)
&=\frac{1}{\delta !}\frac{\left(1-\frac{1}{c}\right)^{-\frac{r(r-1)}{2}}}{s_{\mathbf{m}}(1,\ldots,1)s_{\mathbf{x}}(1,\ldots,1)}\prod_{j=1}^{r}(\alpha -r+1)_{j-1} \nonumber \\
\label{eq:det Meixner}
{} & \quad \cdot \det(M_{m_{\mu}+r-\mu}(x_{\nu}+r-\nu;\alpha -r+1,c)). 
\end{align}
Here, $M_{m_{\mu}+r-\mu}(x_{\nu}+r-\nu;\alpha -r+1,c)$ is a one variable Meixner polynomial.

\noindent
{\rm{(2)}}\,
%Let $a\not=0$. 
For any $\mathbf{m},\mathbf{x} \in \mathscr{P}$, we obtain
\begin{align}
\label{eq:det Charlier}
C_{\mathbf{m}}(\mathbf{x};a)
=\frac{1}{\delta !}\frac{(-a)^{\frac{r(r-1)}{2}}}{s_{\mathbf{m}}(1,\ldots,1)s_{\mathbf{x}}(1,\ldots,1)}
\det(C_{m_{\mu}+r-\mu}(x_{\nu}+r-\nu;a)).
\end{align}
Here, $C_{m_{\mu}+r-\mu}(x_{\nu}+r-\nu;a)$ is a one variable Charlier polynomial.

\noindent
{\rm{(3)}}\,
%Let $p\not=0$. 
% For any $N=(N,\ldots,N) \supset \mathbf{m},\mathbf{x} \in \mathscr{P}$, we obtain
For any $\mathbf{m},\mathbf{x} \subset N=(N,\ldots,N)$, we obtain
\begin{align}
K_{\mathbf{m}}(\mathbf{x};p,N)
&=\frac{1}{\delta !}\frac{p^{\frac{r(r-1)}{2}}}{s_{\mathbf{m}}(1,\ldots,1)s_{\mathbf{x}}(1,\ldots,1)}\prod_{j=1}^{r}(-N-r+1)_{j-1} \nonumber \\
\label{eq:det Krawtchouk}
{} & \quad \cdot \det(K_{m_{\mu}+r-\mu}(x_{\nu}+r-\nu;p,N+r-1)).
\end{align}
Here, $K_{m_{\mu}+r-\mu}(x_{\nu}+r-\nu;p,N+r-1)$ is a one variable Krawtchouk polynomial.
\end{thm}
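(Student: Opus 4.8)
The plan is to reduce every multivariate object to the corresponding one-variable generating function by means of the Heine-type determinant identity in Lemma~\ref{thm:det lem}. Throughout I use the $d=2$ specialisations: the spherical polynomials collapse to normalised Schur polynomials, $\Phi_{\mathbf{m}}=s_{\mathbf{m}}/s_{\mathbf{m}}(1,\ldots,1)$ by \eqref{eq:spherical and schur}; the dimensions become $d_{\mathbf{m}}=s_{\mathbf{m}}(1,\ldots,1)^{2}$ by \eqref{eq:d and Schur}; and $\frac{n}{r}=r$. For a fixed $\mathbf{x}$ I would take $z=\sum_j z_j c_j$ in the Meixner generating function \eqref{eq:gen fnc of Meixner}, so that both sides are symmetric functions of the eigenvalues $z_1,\ldots,z_r$. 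The right-hand side then becomes
\[
\sum_{\mathbf{n}\in\mathscr{P}} s_{\mathbf{n}}(1,\ldots,1)\,\frac{(\alpha)_{\mathbf{n}}}{(r)_{\mathbf{n}}}\,M_{\mathbf{n}}(\mathbf{x};\alpha,c)\,s_{\mathbf{n}}(z_1,\ldots,z_r),
\]
while the left-hand side equals $s_{\mathbf{x}}(1,\ldots,1)^{-1}\prod_j(1-z_j)^{-\alpha}\,s_{\mathbf{x}}(w_1,\ldots,w_r)$ with $w_j=(1-\frac{1}{c}z_j)/(1-z_j)$.

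The crux of the Meixner step is the Vandermonde transform. A direct computation gives $w_\mu-w_\nu=(1-\frac{1}{c})(z_\mu-z_\nu)/((1-z_\mu)(1-z_\nu))$, hence $V(w)=(1-\frac{1}{c})^{r(r-1)/2}V(z)\prod_j(1-z_j)^{-(r-1)}$. Writing $s_{\mathbf{x}}(w)=\det(w_\nu^{x_\mu+r-\mu})/V(w)$ and absorbing the symmetric factor $\prod_j(1-z_j)^{r-1-\alpha}$ column by column, the left-hand side becomes $s_{\mathbf{x}}(1,\ldots,1)^{-1}(1-\frac{1}{c})^{-r(r-1)/2}\det(f_\mu(z_\nu))/V(z)$, where
\[
f_\mu(z)=(1-z)^{\,r-1-\alpha}\Bigl(\tfrac{1-\frac{1}{c}z}{1-z}\Bigr)^{x_\mu+r-\mu}.
\]
The $(r-1)$ factors of $(1-z_j)$ produced by $V(w)$ shift the exponent from $-\alpha$ to $-(\alpha-r+1)$, so $f_\mu$ is exactly the one-variable Meixner generating function of the Introduction with parameter $\alpha-r+1$ and argument $x_\mu+r-\mu$, i.e. $f_\mu(z)=\sum_m \frac{(\alpha-r+1)_m}{m!}M_m(x_\mu+r-\mu;\alpha-r+1,c)z^m$. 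Applying Lemma~\ref{thm:det lem} to $\det(f_\mu(z_\nu))/V(z)$ and equating the (linearly independent) coefficients of $s_{\mathbf{m}}(z)$ yields \eqref{eq:det Meixner} after the prefactor identities $(r)_{\mathbf{m}}/\prod_\nu(m_\nu+r-\nu)!=1/\delta!$ and $\prod_\nu(\alpha-r+1)_{m_\nu+r-\nu}=(\alpha)_{\mathbf{m}}\prod_{j}(\alpha-r+1)_{j-1}$, both elementary consequences of $(a)_{p+q}=(a)_p(a+p)_q$ and the $d=2$ factorisation $(s)_{\mathbf{m}}=\prod_j(s-(j-1))_{m_j}$.

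For the Charlier case I would run the identical argument on \eqref{eq:gen fnc of Charlier} with the linear substitution $w_j=1-\frac{1}{a}z_j$, giving $V(w)=(-\frac{1}{a})^{r(r-1)/2}V(z)$ with no residual $(1-z_j)$ factors and $f_\mu(z)=e^{z}(1-\frac{1}{a}z)^{x_\mu+r-\mu}$, the classical one-variable Charlier generating function; equating Schur coefficients and using $(r)_{\mathbf{m}}/\prod_\nu(m_\nu+r-\nu)!=1/\delta!$ gives \eqref{eq:det Charlier} (alternatively this follows from \eqref{eq:det Meixner} by the degenerate limit $c=\frac{a}{a+\alpha}$, $\alpha\to\infty$ of Proposition~\ref{thm:relation of MDOP}\,(2)). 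For the Krawtchouk case it is cleanest to specialise \eqref{eq:det Meixner} at $\alpha=-N$, $c=\frac{p}{p-1}$: both sides are rational in $\alpha$ and $c$, so the identity persists under this substitution (the sums being finite), one has $1-\frac{1}{c}=\frac{1}{p}$, the entries become $M_k(\,\cdot\,;-(N+r-1),\frac{p}{p-1})=K_k(\,\cdot\,;p,N+r-1)$ by \eqref{eq:relation Meixner and Krawtchouk}, and $\prod_j(\alpha-r+1)_{j-1}$ turns into $\prod_j(-N-r+1)_{j-1}$, which is \eqref{eq:det Krawtchouk}.

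The main obstacle is the bookkeeping in the Meixner step: one must track precisely how the $(1-z_j)$ denominators generated by the ratio $V(w)/V(z)$ combine with $\Delta(e-z)^{-\alpha}=\prod_j(1-z_j)^{-\alpha}$ to produce the shifted parameter $\alpha-r+1$ in the correct power, while simultaneously the Schur numerator $\det(w_\nu^{x_\mu+r-\mu})$ installs the index shifts $x_\mu+r-\mu$ and $m_\mu+r-\mu$. Keeping the parameter shift and the index shifts mutually consistent, and then matching the residual Pochhammer and factorial prefactors to the clean constant $\frac{1}{\delta!}\prod_j(\alpha-r+1)_{j-1}$, is the only genuinely delicate part; the Charlier and Krawtchouk statements are then either routine repetitions or formal specialisations.
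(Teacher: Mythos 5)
Your proposal is correct and follows essentially the same route as the paper: eigenvalue substitution in the generating function \eqref{eq:gen fnc of Meixner}, the Vandermonde identity $w_\mu-w_\nu=\left(1-\tfrac{1}{c}\right)\tfrac{z_\mu-z_\nu}{(1-z_\mu)(1-z_\nu)}$ to shift the parameter to $\alpha-r+1$, expansion via Lemma~\ref{thm:det lem}, and comparison of Schur coefficients, followed by specialisation $\alpha=-N$, $c=\tfrac{p}{p-1}$ for Krawtchouk. The only (harmless) deviation is in the Charlier case, where you primarily rerun the determinantal argument on \eqref{eq:gen fnc of Charlier} directly, whereas the paper obtains \eqref{eq:det Charlier} by the degenerate limit $c=\tfrac{a}{a+\alpha}$, $\alpha\to\infty$ of \eqref{eq:det Meixner} — a route you also note as an alternative.
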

\begin{proof}
{\rm{(1)}}\,Let put $z=\sum_{j=1}^{r}z_{j}c_{j}, (0<z_{1},\ldots,z_{r}<1)$. 
Since 
$$
s_{\mathbf{m}}(z_{1},\ldots,z_{r})=s_{\mathbf{m}}(1,\ldots,1)\Phi_{\mathbf{m}}(z), 
$$ 
the generating function of the multivariate Meixner polynomials (\ref{eq:gen fnc of Meixner}) express
\begin{align}
\sum_{\mathbf{m} \in \mathscr{P}}d_{\mathbf{m}}\frac{(\alpha)_{\mathbf{m}}}{\left(r\right)_{\mathbf{m}}}M_{\mathbf{m}}(\mathbf{x};\alpha ,c)\Phi_{\mathbf{m}}(z)
&=\Delta(e-z)^{-\alpha}\Phi_{\mathbf{x}}\left(\left(e-\frac{1}{c}z\right)(e-z)^{-1}\right) \nonumber \\
&=\frac{1}{s_{\mathbf{x}}(1,\ldots,1)}\det\left((1-z_{\mu})^{-\alpha}\left(\frac{1-\frac{1}{c}z_{\mu}}{1-z_{\mu}}\right)^{x_{\nu}+r-\nu}\right) \nonumber \\
& \quad \cdot \frac{1}{V\left(\frac{1-\frac{1}{c}z_{1}}{1-z_{1}},\ldots,\frac{1-\frac{1}{c}z_{r}}{1-z_{r}}\right)}. \nonumber
\end{align}
Further, noticing that
$$
\frac{1-\frac{1}{c}z_{\mu}}{1-z_{\mu}}-\frac{1-\frac{1}{c}z_{\nu}}{1-z_{\nu}}
=\left(1-\frac{1}{c}\right)\frac{z_{\mu}-z_{\nu}}{(1-z_{\mu})(1-z_{\nu})},
$$
we obtain 
\begin{align}
\sum_{\mathbf{m} \in \mathscr{P}}d_{\mathbf{m}}\frac{(\alpha)_{\mathbf{m}}}{\left(r\right)_{\mathbf{m}}}M_{\mathbf{m}}(\mathbf{x};\alpha ,c)\Phi_{\mathbf{m}}(z)
&=\frac{\left(1-\frac{1}{c}\right)^{-\frac{r(r-1)}{2}}}{s_{\mathbf{x}}(1,\ldots,1)}
\frac{\det\left((1-z_{\mu})^{-(\alpha-r+1)}\left(\frac{1-\frac{1}{c}z_{\mu}}{1-z_{\mu}}\right)^{x_{\nu}+r-\nu}\right)}{V(z_{1},\ldots,z_{r})}. \nonumber
\end{align}
Here, by (\ref{eq:gen fnc of Meixner}), we remark 
$$
f_{\nu}(z_{\mu})=(1-z_{\mu})^{-(\alpha-r+1)}\left(\frac{1-\frac{1}{c}z_{\mu}}{1-z_{\mu}}\right)^{x_{\nu}+r-\nu}
=\sum_{m\geq 0}\frac{(\alpha -r+1)_{m}}{m!}M_{m}(x_{\nu}+r-\nu;\alpha -r+1,c)z_{\mu}.
$$
Therefore, we expand the above determinant expression in Schur function series by using Lemma\,\ref{thm:det lem}.
\begin{align}
\sum_{\mathbf{n} \in \mathscr{P}}d_{\mathbf{n}}\frac{(\alpha)_{\mathbf{n}}}{\left(r\right)_{\mathbf{n}}}M_{\mathbf{n}}(\mathbf{x};\alpha ,c)\Phi_{\mathbf{n}}(z)
% &=\frac{\left(1-\frac{1}{c}\right)^{-\frac{r(r-1)}{2}}}{s_{\mathbf{x}}(1,\ldots,1)} \nonumber \\
% & \quad \cdot \sum_{\mathbf{m} \in \mathscr{P}}\det\left(\frac{(\alpha -r+1)_{m_{\mu}+r-\mu}}{(m_{\mu}+r-\mu)!}M_{m_{\mu}+r-\mu}(x_{\nu}+r-\nu;\alpha -r+1,c)\right) \nonumber \\
&=\frac{1}{\delta !}\frac{\left(1-\frac{1}{c}\right)^{-\frac{r(r-1)}{2}}}{s_{\mathbf{x}}(1,\ldots,1)}\prod_{j=1}^{r}(\alpha -r+1)_{j-1} \nonumber \\
& \quad \cdot \sum_{\mathbf{m} \in \mathscr{P}}\frac{(\alpha)_{\mathbf{m}}}{(r)_{\mathbf{m}}}\det\left(M_{m_{\mu}+r-\mu}(x_{\nu}+r-\nu;\alpha -r+1,c)\right) \nonumber \\
& \quad \cdot s_{\mathbf{m}}(1,\ldots,1)\Phi_{\mathbf{m}}(z). \nonumber
\end{align}
Finally, by comparing of $\Phi_{\mathbf{m}}(z)$ on the above equation for $\mathbf{m} \in \mathscr{P}$, we obtain (\ref{eq:det Meixner}). 
Since both sides of (\ref{eq:det Meixner}) which are rational functions for $\alpha$ and $c$ hold for $\alpha \in \mathbb{C}\setminus \mathbb{Z}_{\leq r-1}, c\not=0$.

\noindent
{\rm{(2)}}\,From (\ref{eq:relation Meixner and Charlier}) and (\ref{eq:det Meixner}), we have
\begin{align}
C_{\mathbf{m}}(\mathbf{x};a)
&=\lim_{\alpha \to \infty}M_{\mathbf{m}}\left(\mathbf{x};\alpha ,\frac{a}{a+\alpha}\right) \nonumber \\
&=\frac{1}{\delta !}\frac{(-a)^{\frac{r(r-1)}{2}}}{s_{\mathbf{m}}(1,\ldots,1)s_{\mathbf{x}}(1,\ldots,1)} \nonumber \\
& \quad \cdot \lim_{\alpha \to \infty}\prod_{j=1}^{r}\frac{(\alpha -r+1)_{j-1}}{\alpha ^{j-1}}
\det\left(M_{m_{\mu}+r-\mu}\left(x_{\nu}+r-\nu;\alpha -r+1,\frac{a}{a+\alpha}\right)\right) \nonumber \\
&=\frac{1}{\delta !}\frac{(-a)^{\frac{r(r-1)}{2}}}{s_{\mathbf{m}}(1,\ldots,1)s_{\mathbf{x}}(1,\ldots,1)}
\det(C_{m_{\mu}+r-\mu}(x_{\nu}+r-\nu;a)). \nonumber 
\end{align}

\noindent
{\rm{(3)}}\,From (\ref{eq:relation Meixner and Krawtchouk}) and (\ref{eq:det Meixner}), we have
\begin{align}
K_{\mathbf{m}}(\mathbf{x};p ,N)
&=M_{\mathbf{m}}\left(\mathbf{x};-N ,\frac{p}{p-1}\right) \nonumber \\
&=\frac{1}{\delta !}\frac{p^{\frac{r(r-1)}{2}}}{s_{\mathbf{m}}(1,\ldots,1)s_{\mathbf{x}}(1,\ldots,1)}\prod_{j=1}^{r}(-N-r+1)_{j-1} \nonumber \\
& \quad \cdot \det\left(M_{m_{\mu}+r-\mu}\left(x_{\nu}+r-\nu;-N -r+1,\frac{p}{p-1}\right)\right). \nonumber
\end{align}
Here, by using (\ref{eq:relation Meixner and Krawtchouk}) again, 
$$
M_{m_{\mu}+r-\mu}\left(x_{\nu}+r-\nu;-N -r+1,\frac{p}{p-1}\right)=K_{m_{\mu}+r-\mu}(x_{\nu}+r-\nu;p,N+r-1).
$$
Therefore, we obtain the conclusion. 
\end{proof}

\section{Concluding remarks}
Interesting problems remain that are related to the multivariate Meixner, Charlier and Krawtchouk polynomials. 
First, we may consider a generalization of our discrete orthogonal polynomials 
for an arbitrary real value of multiplicity $d>0$. 
Actually, we can consider the multivariate Meixner, Charlier and Krawtchouk polynomials and their orthogonality without using analysis on the symmetric cones as follows. 
%that is, if

Let $n:=r+\frac{d}{2}r(r-1)$, $d>0$
\begin{align}
\label{eq:generalized d_{m}}
d_{\mathbf{m}}&:=%\prod_{1\leq p<q\leq r}\frac{m_{p}-m_{q}+\frac{d}{2}(q-p)}{\frac{d}{2}(q-p)}\frac{B\left(m_{p}-m_{q},\frac{d}{2}(q-p-1)+1\right)}{B\left(m_{p}-m_{q},\frac{d}{2}(q+1-p)\right)}, \nonumber \\
\prod_{j=1}^{r}\frac{\Gamma\left(\frac{d}{2}\right)}{\Gamma\left(\frac{d}{2}j\right)\Gamma\left(\frac{d}{2}(j-1)+1\right)} \nonumber \\
{} & \quad \cdot
\prod_{1\leq p<q\leq r}\left(m_{p}-m_{q}+\frac{d}{2}(q-p)\right)\frac{\Gamma\left( m_{p}-m_{q}+\frac{d}{2}(q-p+1)\right)}{\Gamma\left(m_{p}-m_{q}+\frac{d}{2}(q-p-1)+1\right)}. \nonumber \\
\Gamma_{\Omega}(\mathbf{s})&:=(2\pi)^{\frac{n-r}{2}}\prod_{j=1}^{r}\Gamma\left(s_{j}-\frac{d}{2}(j-1)\right), \nonumber \\
%\widetilde{c_{0}}&:=(2\pi)^{\frac{n-r}{2}}\prod_{j=1}^{r}\frac{\Gamma\left(\frac{d}{2}+1\right)}{\Gamma\left(\frac{d}{2}j+1\right)}, \nonumber \\  
(\mathbf{s})_{\mathbf{k}}&:=\prod_{j=1}^{r}\left(s_{j}-\frac{d}{2}(j-1)\right)_{k_{j}}. \nonumber
\end{align}
Further, $P_{\mathbf{k}}^{(\frac{2}{d})}(\lambda_{1},\ldots,\lambda_{r})$ is an $r$-variable Jack polynomial %(see \cite{M}, Chapter. VI.10) 
and 
\begin{equation}
\label{eq:generalized spherical poly}
\Phi_{\mathbf{k}}^{(d)}(\lambda_{1},\ldots,\lambda_{r}):=\frac{P_{\mathbf{k}}^{(\frac{2}{d})}(\lambda_{1},\ldots,\lambda_{r})}{P_{\mathbf{k}}^{(\frac{2}{d})}(1,\ldots,1)}.
\end{equation} 
Furthermore, we introduce the generalized (Jack) binomial coefficients based on \cite{OO} by 
% $$
% \frac{P_{\mathbf{m}}^{(\frac{2}{d})}(1+\lambda_{1},\ldots,1+\lambda_{r})}{P_{\mathbf{m}}^{(\frac{2}{d})}(1,\ldots,1)}
% =\sum_{\mathbf{k}\subset \mathbf{m}}\binom{\mathbf{m}}{\mathbf{k}}_{\frac{d}{2}}\frac{P_{\mathbf{k}}^{(\frac{2}{d})}(\lambda_{1},\ldots,\lambda_{r})}{P_{\mathbf{k}}^{(\frac{2}{d})}(1,\ldots,1)}.
% $$
$$
\Phi_{\mathbf{m}}^{(d)}(1+\lambda_{1},\ldots,1+\lambda_{r})
=\sum_{\mathbf{k}\subset \mathbf{m}}\binom{\mathbf{m}}{\mathbf{k}}_{\frac{d}{2}}\Phi_{\mathbf{k}}^{(d)}(\lambda_{1},\ldots,\lambda_{r}).
$$
\begin{dfn}
We define the generalized multivariate Meixner, Charlier and Krawtchouk polynomials by 
\begin{align}
M_{\mathbf{m}}^{(d)}(\mathbf{x};\alpha ,c):=&\sum_{\mathbf{k}\subset \mathbf{m}}\frac{1}{d_{\mathbf{k}}}\frac{\left(\frac{n}{r}\right)_{\mathbf{k}}}{(\alpha)_{\mathbf{k}}}
\binom{\mathbf{m}}{\mathbf{k}}_{\frac{d}{2}}\binom{\mathbf{x}}{\mathbf{k}}_{\frac{d}{2}}\left(1-\frac{1}{c}\right)^{|\mathbf{k}|}, \\
C_{\mathbf{m}}^{(d)}(\mathbf{x};a):=&\sum_{\mathbf{k}\subset \mathbf{m}}\frac{1}{d_{\mathbf{k}}}\left(\frac{n}{r}\right)_{\mathbf{k}}
\binom{\mathbf{m}}{\mathbf{k}}_{\frac{d}{2}}\binom{\mathbf{x}}{\mathbf{k}}_{\frac{d}{2}}\left(-\frac{1}{a}\right)^{|\mathbf{k}|}, \\
K_{\mathbf{m}}^{(d)}(\mathbf{x};p ,N):=&\sum_{\mathbf{k}\subset \mathbf{m}}\frac{1}{d_{\mathbf{k}}}\frac{\left(\frac{n}{r}\right)_{\mathbf{k}}}{(-N)_{\mathbf{k}}}
\binom{\mathbf{m}}{\mathbf{k}}_{\frac{d}{2}}\binom{\mathbf{x}}{\mathbf{k}}_{\frac{d}{2}}\left(\frac{1}{p}\right)^{|\mathbf{k}|}\,\,\,\,(\mathbf{m} \subset N=(N,\ldots,N)).
\end{align}
\end{dfn}
By the definitions, %we obtain the duality which is a generalization of Proposition\,\ref{thm:duality of MDOP} immediately. Further, 
Proposition\,\ref{thm:duality of MDOP} and \ref{thm:relation of MDOP} also hold for the generalized multivariate Meixner, Charlier and Krawtchouk polynomials. 
Therefore, we think the following conjecture is natural. 
\begin{conj}
% Then Proposition\,\ref{thm:duality of MDOP}(duality) and Theorem\,\ref{thm:generating fnc of MDOP}(generating functions), 
% Theorem\,\ref{thm:Orthogonality of MDOP}\\
% \noindent
% (orthogonality), Theorem\,\ref{thm:Difference eq of MDOP}(difference equations) 
% and Theorem\,\ref{thm:Difference eq of MDOP}(recurrence formulas) also hold for these polynomials. 
Generating functions, orthogonality, difference equations and recurrence formulas also hold 
for the generalized multivariate Meixner, Charlier and Krawtchouk polynomials, 
as in Theorems\,\ref{thm:generating fnc of MDOP}, \ref{thm:Orthogonality of MDOP}, \ref{thm:Difference eq of MDOP} and \ref{thm:Recurrence formula of MDOP} respectively. 
% Then Proposition\,\ref{thm:duality of MDOP}, 
% Theorems\,\ref{thm:generating fnc of MDOP}, \ref{thm:Orthogonality of MDOP}, \ref{thm:Difference eq of MDOP} and \ref{thm:Difference eq of MDOP} also hold for these polynomials. 
Here, we consider $\Delta(e-z)=(1-z_{1})\cdots(1-z_{r})$.
\end{conj}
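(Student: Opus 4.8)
The plan is to strip every step of Section~3 of its symmetric-cone interpretation and re-establish it as a formal identity among Jack polynomials with parameter $\theta=\frac{2}{d}$, valid for all $d>0$. The quantities $d_{\mathbf{m}}$, $(\alpha)_{\mathbf{m}}$, $\left(\frac{n}{r}\right)_{\mathbf{m}}$, $\binom{\mathbf{m}}{\mathbf{k}}_{\frac{d}{2}}$ and $\gamma_{\mathbf{k}}(\mathbf{m}-\rho)=\frac{1}{d_{\mathbf{k}}}\left(\frac{n}{r}\right)_{\mathbf{k}}\binom{\mathbf{m}}{\mathbf{k}}$ are all defined combinatorially, and $\Phi_{\mathbf{k}}^{(d)}$ is a normalized Jack polynomial, so every formula in Theorems \ref{thm:generating fnc of MDOP}--\ref{thm:Recurrence formula of MDOP} already makes sense verbatim once one reads $\Delta(e-z)$ as $(1-z_{1})\cdots(1-z_{r})$ (as the conjecture stipulates) and each $K$-average $\int_{K}(\cdots)\,dk$ as the corresponding hypergeometric series of Jack argument in the sense of Kaneko, Yan and Macdonald. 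The guiding principle is that the geometric proofs only ever use four structural facts---the spherical Taylor expansions of Theorem \ref{thm:spherical Taylor expan lem}, the Cauchy kernel of Lemma \ref{thm:Cauchy kernel of spherical poly}, the Laguerre generating function of Lemma \ref{thm:generating fnc of Laguerre and MP}, and the Pieri-type formulas of Lemma \ref{thm:Pieri}---each of which is a specialization of an identity valid for arbitrary $\theta>0$.

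The first genuinely new work is to supply $\theta$-analogues of Theorem \ref{thm:spherical Taylor expan lem} and Lemma \ref{thm:generating fnc of Laguerre and MP}. For the former, expansion (\ref{eq:basic expansion 1}) is exactly the binomial theorem for Jack polynomials and (\ref{eq:basic expansion 2}) its confluent $\alpha\to\infty$ limit; both are available for all $\theta>0$ from the shifted-Jack-polynomial calculus of Okounkov--Olshanski (already invoked in the proof of Lemma \ref{thm:positivity of shifted Jack}) and Lassalle. For the latter, $L_{\mathbf{m}}^{(\alpha-n/r)}$ is a Jack--Laguerre polynomial and its generating function (\ref{eq:generating fnc of Laguerre}) holds at the level of Jack-hypergeometric series. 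The absolute-convergence estimates of Lemma \ref{thm:absolute convergence of gen fnc of gen fnc for Meixner} and of (\ref{eq:absolute conv for MLP}) use only Lemmas \ref{thm:ineq for generalized shifted factorial}, \ref{thm:FK,Thm12.1.1} and \ref{thm:positivity of shifted Jack}, whose proofs are purely combinatorial (the bound $|\Phi_{\mathbf{m}}^{(d)}(\lambda)|\le\lambda_{1}^{|\mathbf{m}|}$ for $0\le\lambda_{r}\le\cdots\le\lambda_{1}$ follows from the monomial positivity of Jack polynomials), and hence persist for all $d>0$. Granting these, the master generating functions of Theorem \ref{thm:master generating fnc 1} follow by the same rearrangement of absolutely convergent Jack series as in the text.

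The generating functions of Theorem \ref{thm:generating fnc of MDOP} and the orthogonality relations of Theorem \ref{thm:Orthogonality of MDOP} are then \emph{soft}: in the symmetric-cone proof the operator $\Phi_{\mathbf{x}}(\partial_{w})$ evaluated at $w=0$ merely extracts the coefficient of $d_{\mathbf{x}}\left(\frac{n}{r}\right)_{\mathbf{x}}^{-1}\Phi_{\mathbf{x}}(w)$, and the orthogonality comes from equating coefficients of $\Phi_{\mathbf{m}}(z)\Phi_{\mathbf{n}}(w)$. Since the Jack polynomials $\{\Phi_{\mathbf{m}}\}$ are linearly independent, both operations are legitimate purely formally once the master identities hold, so no differential operator on $V$ is needed. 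For orthogonality I would first prove the $\theta$-analogue of (\ref{eq:master generating fnc 2-2}) directly---its left side is the Jack--Cauchy kernel of Lemma \ref{thm:Cauchy kernel of spherical poly}, and substituting the Meixner generating function (\ref{eq:gen fnc of Meixner}) into it reproduces the right side by the Cauchy identity for Jack polynomials---thereby bypassing the unitary transforms $\mathcal{L}_{\alpha},C_{\alpha}^{-1}$, which have no meaning for general $d$. The positivity assertions survive because $d_{\mathbf{m}},\left(\frac{n}{r}\right)_{\mathbf{m}}>0$ and $\gamma_{\mathbf{k}}(\mathbf{m}-\rho)\ge 0$ all remain valid for $\theta>0$ by Lemma \ref{thm:positivity of shifted Jack}.

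The hard part will be the difference equations and recurrences of Theorems \ref{thm:Difference eq of MDOP} and \ref{thm:Recurrence formula of MDOP}, because their derivation runs through the operator $D_{\alpha}^{(1)}$ and its action (\ref{eq:conj differntial of D3 and spherical poly}) on spherical polynomials, which in turn rests on the three Pieri-type formulas (\ref{eq:Pieri 1})--(\ref{eq:Pieri 3}) of Lemma \ref{thm:Pieri}---statements about an honest $G$-invariant operator on $\Omega$. For general $d$ there is no such operator, so the route must instead be combinatorial: the three formulas are precisely the Pieri rule, the dual (lowering) Pieri rule, and the $(\tr(x^{2}\nabla))$ rule for Jack polynomials, all known for arbitrary $\theta$ (Stanley, Macdonald, and the Sekiguchi--Debiard operators). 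I would take the resulting expansion of $e^{C\tr{u}}D_{\alpha}^{(1)}e^{-C\tr{u}}\Phi_{\mathbf{x}}$ in (\ref{eq:conj differntial of D3 and spherical poly}) as a purely combinatorial identity---an action on Jack polynomials with coefficients $\tilde{a}_{j}(\mathbf{x})$ given by the stated product formula---verify it from the Jack Pieri rules, and then feed it into the argument of Theorem \ref{thm:Difference eq of MDOP} unchanged. Tracking the coefficients $a_{j}(\mathbf{s})$, $\tilde{a}_{j}(\pm\mathbf{x})$ and the quadratic factors through these three rules, and checking that the signs match (the very point where Faraut--Wakayama's lemma had to be corrected in Lemma \ref{thm:differential Lemma for Laguerre and difference Lemma for Meixner}), is the delicate technical core and the reason the statement is only conjectured; everything else is a transcription of the symmetric-cone argument into the language of Jack polynomials.
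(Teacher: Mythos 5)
Your proposal targets a statement the paper does not actually prove: it is stated as a conjecture, established only for those multiplicities $d$ realized by irreducible symmetric cones ($d=1,2,4$ for general $r$, integer $d$ for $r=2$, and $d=8$ for $r=3$), and the author explicitly suggests that the general case will need new algebraic input (the degenerate double affine Hecke algebra). So there is no paper proof to compare against, and judged on its own terms your text is a research program rather than a proof. The overall strategy --- read everything as Jack-polynomial data with parameter $\theta=\frac{2}{d}$, replace the $K$-averages by hypergeometric series of two Jack arguments, and re-derive the structural inputs (Theorem\,\ref{thm:spherical Taylor expan lem}, Lemma\,\ref{thm:Cauchy kernel of spherical poly}, Lemma\,\ref{thm:generating fnc of Laguerre and MP}, Lemma\,\ref{thm:Pieri}) from shifted-Jack and Pieri calculus --- is indeed the natural route. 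But those $\theta$-analogues \emph{are} the content of the conjecture: asserting they are ``available'' from Okounkov--Olshanski, Lassalle, Baker--Forrester or Stanley, without stating and verifying the exact identities in the paper's normalization (the constants $d_{\mathbf{m}}$, $\left(\frac{n}{r}\right)_{\mathbf{k}}$, $\gamma_{\mathbf{k}}$, $\tilde{a}_{j}$), leaves the conjecture exactly where it stood; you concede this yourself for the difference equations, which you defer.

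Beyond incompleteness, two of your shortcuts fail as described. First, the orthogonality step is circular: substituting the Meixner generating function (\ref{eq:gen fnc of Meixner}) (with argument $cw$) into the right-hand side of (\ref{eq:master generating fnc 2-2}) and comparing coefficients of $\Phi_{\mathbf{m}}(z)\Phi_{\mathbf{n}}(w)$ is precisely how the paper deduces (\ref{eq:Orthogonality of Meixner}) \emph{from} (\ref{eq:master generating fnc 2-2}); in other words, given (\ref{eq:gen fnc of Meixner}), the identity (\ref{eq:master generating fnc 2-2}) is \emph{equivalent} to the orthogonality relation, so nothing ``collapses by the Cauchy identity'' --- and the Cauchy identity for Jack polynomials (the expansion of $\prod_{i,j}(1-z_{i}w_{j})^{-\theta}$) is in any case not the two-argument ${}_{1}F_{0}$ identity needed here. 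The paper's proof of (\ref{eq:master generating fnc 2-2}) has genuine content: it transports (\ref{eq:master generating fnc 2-1}) through $C_{\alpha}^{-1}\circ\mathcal{L}_{\alpha}$ using the integral formula of Lemma\,\ref{thm:int formula}, which has no meaning for general $d$; to imitate it you would need a formal Jack--Laplace transform satisfying $\psi_{\mathbf{m}}^{(\alpha)}\mapsto f_{\mathbf{m}}^{(\alpha)}$ and $e^{-\frac{1+c}{1-c}\tr{u}}\Phi_{\mathbf{x}}(u)\mapsto(\alpha)_{\mathbf{x}}(1-c)^{r\alpha}\Delta(e-cw)^{-\alpha}\Phi_{\mathbf{x}}\left(\frac{1-c}{2}(e-w)(e-cw)^{-1}\right)$, together with a justification for applying it term by term to an infinite series, none of which is supplied. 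Second, for the difference equations the required $\theta$-analogues of (\ref{eq:Pieri 2}), (\ref{eq:Pieri 3}) and hence of (\ref{eq:conj differntial of D3 and spherical poly}) are exactly the delicate sign-sensitive computations (the point where Faraut--Wakayama had to be corrected), and you leave them unverified. Note finally that a rationality-in-$d$ argument cannot fill these gaps for $r\geq 3$, since the cone cases supply only finitely many values $d\in\{1,2,4,8\}$. So the proposal is a plausible outline of how the conjecture might eventually be settled, but it does not prove it.
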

\noindent
%We remark that by the definitions we obtain the duality immediately and 
We remark that when $d=1,2,4$ or $r=2, d \in \mathbb{Z}_{> 0}$ or $r=3, d=8$, this conjecture is proved by this paper and the classification of irreducible symmetric cones. 
However, it may be necessary to consider an algebraic treatment to prove the general case. 
In particular, since the difference equation for the multivariate Meixner polynomials is equivalent to the differential equation for the multivariate Laguerre polynomials which is explained by the degenerate double affine Hecke algebra \cite{Ka}, 
%we expect some algebraic structure related to this algebra for our polynomials. 
we expect the existence of a particular algebraic structure related to this algebra for our polynomials. 
Once we obtain such an interpretation, 
we may not only succeed in proving the above conjecture but also in providing further generalizations of our polynomials associated with root systems.

%The second problem is to give group theoretic picture of our multivariate discrete orthogonal polynomials. 
It is also valuable to give a group theoretic picture of our multivariate discrete orthogonal polynomials. 
In the one variable case, 
%these polynomials have found many geometric interpretations \cite{VK1}, \cite{VK2}. 
there are many geometric interpretations for these polynomials \cite{VK1}, \cite{VK2}.
Moreover, in the multivariate case for the Aomoto-Gelfand hypergeometric series, such group theoretic interpretations have recently been studied \cite{GVZ}, \cite{GMVZ}. 
On the other hand, since our multivariate discrete orthogonal polynomials have many rich properties which are generalizations of the one variable case, 
they are considered to be a good multivariate analogue of the Meixner, Charlier and Krawtchouk polynomials. 
% Hence, for our multivariate discrete orthogonal polynomials, it seems that there exists some group theoretic picture, that is, they become some matrix elements or some spherical functions etc. 
Hence, for our multivariate discrete orthogonal polynomials, it seems that there are some group theoretic interpretations as some matrix elements or some spherical functions etc. 
We are also interested in a connection between our multivariate discrete orthogonal polynomials and the Aomoto-Gelfand type. 
%For example, can we obtain their connection formula? 

We are interested in whether we can apply our method to other discrete orthogonal polynomials, for example, the Hahn polynomials which are special orthogonal polynomials in the Askey scheme \cite{KLS}, %dual Hahn and Racah polynomials. 
\begin{align}
Q_{m}(x;\alpha,\beta,N)&={_{3}F_2}\left(\begin{matrix}-m,m+\alpha +\beta +1,-x\\ \alpha+1,-N \end{matrix};1\right) \nonumber \\
&=\sum_{k=0}^{m}\frac{k!}{(-N)_{k}}\frac{(m+\alpha+\beta+1)_{k}}{(\alpha+1)_{k}}\binom{m}{k}\binom{x}{k}\,\,\,\,(m=0,1,\cdots,N). \nonumber
\end{align}
Namely, %in one variable case 
by considering ``some generating functions of the generating functions'' for these discrete orthogonal polynomials,  
we expect to obtain a correspondence between the Hahn polynomials and other orthogonal polynomials, for example, the Jacobi polynomials. 
%  %From this correspondence we study these discrete orthogonal polynomials and generalize to a multicariate case. 
% The method we have explored in this thesis can work in view of this correspondence 
%  % to consider "some generating functions of the generating functions" for these discrete orthogonal polynomials 
%  % and to establish a correspondence between their and another orthogonal polynomials. 
%  % Further, from this correspondence, we study these discrete orthogonal polynomials and generalize to a multicariate case. 
% even though our immediate goal is an investigation into Hahn polynomials that degenerate to Meixner and Krawtchouk polynomials, 
% which is not easy. 
% In fact, the well-known generating functions of the Hahn polynomials, for example 
% \begin{align}
% {_{1}F_1}\left(\begin{matrix}-x \\ \alpha+1 \end{matrix};-t\right)
% {_{1}F_1}\left(\begin{matrix}x-N \\ \beta+1 \end{matrix};t\right)
% &=\sum_{m=0}^{N}\frac{(-N)_{m}}{(\beta +1)_{m}m!}Q_{m}(x;\alpha,\beta,N)t^{m}, \nonumber %\\
%  % {_{2}F_0}\left(\begin{matrix}-x,-x+\beta+N+1 \\ {-} \end{matrix};-t\right)
%  % {_{2}F_0}\left(\begin{matrix}x-N,x+\alpha+1 \\ {-} \end{matrix};t\right)
%  % &=\sum_{m=0}^{N}\frac{(-N)_{m}(\alpha +1)_{m}}{m!}Q_{m}(x;\alpha,\beta,N)t^{m}
% \end{align}
% are not suitable for our purposes even for the one variable case. 

Finally, we would like to raise the issue of applications of our multivariate Meixner, Charlier and Krawtchouk polynomials. 
The standard Meixner, Charlier and Krawtchouk polynomials of single discrete variable have found numerous applications
in combinatorics, stochastic processes, probability theory and mathematical physics (for their reference, see the introduction in \cite{GMVZ}). 
Hence, we hope that our multivariate polynomials can be applied to various situations 
and we intend to investigate these in research tasks in the future.

% \section*{Acknowledgements}
% We thank the anonymous referee and editor for their helpful advices. 
% We also thank Professor T.\,H.\,Koornwinder for his precious advices on the references of the shifted Jack polynomials. 

%%%%%%%%%%%%%%%

\bibliographystyle{amsplain}

% \noindent Institute of Mathematics for Industry, Kyushu University\\
% 744, Motooka, Nishi-ku, Fukuoka, 819-0395, JAPAN.\\
% E-mail: g-shibukawa@math.kyushu-u.ac.jp

\noindent Department of Pure and Applied Mathematics, 
Graduate School of Information Science and Technology, Osaka University, \\
1-1, Machikaneyama, Toyonaka, Osaka 560-0043, JAPAN.\\
E-mail: g-shibukawa@math.sci.osaka-u.ac.jp

\end{document}